\documentclass[11pt, a4paper]{article}

\usepackage[dvipsnames]{xcolor}
\usepackage{amssymb,amsmath,amsthm,amsfonts}
\usepackage{bm}                     
\usepackage[english]{babel}
\usepackage[utf8]{inputenc}
\usepackage[colorlinks]{hyperref}
\hypersetup{linkcolor=blue,citecolor=blue}
\usepackage[paper=a4paper,left=25mm,right=25mm,top=25mm,bottom=25mm]{geometry} 
\usepackage{csquotes}
\usepackage{comment}
\usepackage[showonlyrefs]{mathtools}
\mathtoolsset{showonlyrefs=true}

\usepackage{enumitem}

\newtheorem{proposition}{Proposition}[section]
\newtheorem{theorem}[proposition]{Theorem}
\newtheorem{corollary}[proposition]{Corollary}
\newtheorem{lemma}[proposition]{Lemma}

\newtheorem{remark}[proposition]{Remark}

\theoremstyle{definition}
\newtheorem{definition}[proposition]{Definition}
\numberwithin{equation}{section}
\newcommand{\abs}[1]{\left|#1\right|}
\newcommand{\norm}[1]{\left\lVert#1\right\rVert}

\newcommand{\eps}{\varepsilon}

\newcommand{\N}{{\mathbb{N}}}

\newcommand{\R}{{\mathbb{R}}}

\def\LM#1{\hbox{\vrule width.2pt \vbox to#1pt{\vfill \hrule width#1pt
			height.2pt}}}
\def\LL{{\mathchoice {\>\LM7\>}{\>\LM7\>}{\,\LM5\,}{\,\LM{3.35}\,}}}
\def\restr{{\LL}}

\DeclareMathOperator{\supp}{supp}

\def\lt{\left}
\def\rt{\right}

\newcommand{\AC}[1]{{\color{violet} [AC: {#1}]}}

\usepackage[maxbibnames=9, giveninits=true, doi=false, url=false, isbn=false]{biblatex}
\usepackage{doi}
\usepackage{mathrsfs}
\title{Sharp upper bound  for a branched transport problem coming from Ginzburg-Landau models}
\author{Alessandro Cosenza, Michael Goldman, Felix Otto}

\begin{document}

\maketitle
\begin{abstract}
    We consider a branched transport type problem  with weakly imposed boundary conditions, which  can be seen as a blown-up version of a reduced model for type-I superconductors in the regime of vanishing external magnetic field. We prove that if the irrigated measure is (locally) Ahlfors regular then it is of dimension at most $8/5$ in agreement with the conjecture by Conti, the third author and Serfaty.
\end{abstract}
\noindent
{\footnotesize \textbf{AMS-Subject Classification}}. 
{\footnotesize 82D55, 49Q22, 49Q10, 28A80, 49Q20}\\
{\footnotesize \textbf{Keywords}}. 
{\footnotesize Branched optimal transport, fractal measures, type-I superconductors.}
\section{Introduction}
The aim of this paper is to prove an upper bound on the dimension of the irrigated measure of a branched transport problem motivated by the study of pattern formation in type I superconductors. Contrarily to type II superconductors, which have been studied more extensively in the mathematical literature, see e.g. \cite{bbh,sandierserfaty} and also the recent work \cite{vortex3d}, in type I superconductors the normal and superconducting phases are expected to form well-separated domains. Pattern formation is then driven by the competition between  isoperimetry, which favors concentration, and magnetic energy, which favors spreading. In the regime of small applied external magnetic field, complex patterns at the surface of the sample have  been experimentally observed,   \cite{pro1,pro3,pro2}. Is is conjectured that this indicates the presence of branching patterns inside the sample as in related models from material sciences, see \cite{CKO,contidzw,KM,ruland2025microstructures}. Results in this direction in the form of scaling laws were  obtained in \cite{ChCoKoOt,ChKoOt,COS} distinguishing two regimes. In the first regime, the so-called ``uniform branching regime", the magnetization at the boundary of the sample is constant while  in the second  ``non uniformly branching regime", corresponding to extremely small external magnetic field, the resulting magnetization is, as the name suggests, not constant, in line with the experimental observations. In both regimes, since the volume of the superconducting domains is proportional to the applied field which is very small, it is possible to rigorously derive reduced or effective models where the magnetization is concentrated on one-dimensional structures. See \cite{CGOS} for the derivation in the uniform branching regime and \cite{Cosenza:PhD} for the crossover case as well as \cite{cosenzagoldmanzilio} for a related result.
 These reduced models are (non-standard) branched transport energies which share similarities and analogies  with more classical irrigation problems, see  \cite{bellettinimarchese,branchedbook,branchedsantambrogio,branchedwellposedness,robustoptimal,pegonequivalence,PegPet23,pegonfractal2018,landscape,branchedxia}.

The reduced model in the uniform branching regime has been studied in \cite{CGOS}, see also \cite{minimizers2d} where the minimizers is exactly computed in a simplified 2D model, and we concentrate here on the second regime, more specifically on the crossover between them. In this regime, deviation of the magnetization at the boundary (represented in the model by the irrigated measure) from the uniform state is measured through a negative Sobolev norm.  Based on the construction of low energy states, it has been conjectured in \cite{private} that  as a result of the competition between this penalization and branched transport, the irrigated measure should be of dimension $8/5$.

 A first important  step towards the proof of this conjecture was done in  \cite{dephilippis2023energy}, where it was proved that the conjectured dimension would follow provided one can show local energy bounds which match the global ones from \cite{ChCoKoOt,ChKoOt,COS}. In the case of uniform branching, such local bounds (actually localized in both space and time in the language we will use later on) have received considerable attention in many of the  related models cited above, see \cite{conti2000,melanie,KM,reid,Viehmanndiss}. Unfortunately, for non uniform irrigated measures no results of this form seem to exist. A second step towards the conjecture was done in \cite{branched} in a simplified 2D setting. There are two main new insights in \cite{branched} with respect to \cite{dephilippis2023energy}. The first is  that more information can be obtained if we assume that the irrigated measure is a priori Ahlfors regular. The second is that on the one hand, the lower bound on the dimension should come from a combination of a first variation argument (contained in \cite{dephilippis2023energy}) together with a construction of a competitor (as in the proof of the global scaling law from \cite{ChCoKoOt,ChKoOt,COS}). On the other hand, the upper bound on the dimension should result from a different first variation argument together with an interpolation type estimate (present in \cite{dephilippis2023energy} and related to the proof of the lower bound of the scaling laws from \cite{ChCoKoOt,ChKoOt,COS}). As a result, it was proven in \cite{branched} that in the simplified 2D setting and under the hypothesis of Ahlfors regularity, the irrigated measure has the expected dimension (analogous but different from the $8/5$ threshold which is specific to the 3D case). \\

 The main achievement of our paper is to extend the upper bound on the dimension from \cite{branched} to the 3D case. The extension of the lower bound seems unclear for the moment and we leave it to the future. For reasons which will appear below we do not actually consider the energy from \cite{Cosenza:PhD,dephilippis2023energy} but rather a blown-up version that we now formally introduce, see Section \ref{sec:energy} for a more precise one. We consider  the idealized case of a superconducting sample of infinite thickness and width, that is we work in the half space $\R^2\times (0,+\infty)$. The magnetic flux is represented  by a Radon measure  $\mu=\mu_t\otimes dt$ where, for a.e.\ $t\in(0,+\infty)$, $\mu_t=\sum_i\varphi_i\delta_{X_i}$, with $X_i\in \R^2$, $\varphi_i>0$ and $\sum_i\varphi_i=\Phi>0$. Even though the problem is static, we think of the horizontal variable $x$ as space and the  last variable $t$ as time.
We define  the internal energy as
\begin{equation}
\label{e:energyintro}
I(\mu) = \int_0^{+\infty} \left(\sum_{i}\varphi_i^{1/2}(t)\right)-\Phi^{1/2}+\sum_i \varphi_i(t) \vert \dot{X}_i(t)\vert^2\, dt,
\end{equation}
where $\dot{X_i}(t)$ is the derivative of the map $t\mapsto X_i(t)\in \R^2$. Notice that subtracting $\Phi^{1/2}$ in the integral is necessary to make the integral finite, but does not change the minimization problem. The first term in the energy is a subadditive (or perimeter-like) term which favors concentration/mass traveling together. The second term is a kinetic energy which can be interpreted as a transport cost, as recognized by the Benamou-Brenier formulation of optimal transport, see \cite{santambrogio2015optimal}. We impose a weak boundary penalization by a negative Sobolev norm, see Section \ref{sec:otsob}, and consider the energy
\begin{equation}
\label{e:fullenergyintro}
    \mathcal{E}(\mu)=I(\mu) +\norm{\mu_0}_{H^{-1/2}(\R^2)}^2.
\end{equation}
Since the energy \eqref{e:fullenergyintro} is translation invariant, we may assume $\int_{\R^2}xd\mu_0=0$. We fix the total mass $\mu_0(\R^2)=\Phi=1$ (but we keep the letter $\Phi$ to denote generically the mass of finite measures) and consider the minimization problem
\begin{equation}
\label{e:minintro}
   e= \inf \left\{\mathcal{E}(\mu) \colon \int_{\R^2}xd\mu_0=0, \, \mu_0(\R^2)=1\right\}.
\end{equation}
An advantage of this formulation with respect to \cite{Cosenza:PhD,branched,dephilippis2023energy} is that, by scaling, it is now completely parameter-free.
However, one of the a priori drawbacks of working on an unbounded domain is the potential loss of compactness of minimizing sequences. As a first result we prove the existence of compactly supported minimizers for \eqref{e:minintro}. We say that $\mu$ is a compactly supported minimizer if $\mathcal{E}(\mu)=e$ and there exist $R,T>0$ such that for every $t>0$, $\supp \mu_t\subset B_R$ and for $t\ge T$, we have $\mu_t=\delta_0$.
\begin{theorem}\label{theo:existence}
 There exist compactly supported  minimizers of \eqref{e:minintro}. Moreover, there exist $R_0,T_0>0$ such that $R\le R_0$ and $T\le T_0$ for any compactly supported minimizer.
\end{theorem}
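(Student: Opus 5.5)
Existence goes through the direct method, but we first reduce to competitors that are uniformly localized in space and time; this localization is also the source of the a priori bounds $R_0,T_0$. Finiteness of $e$ follows from an explicit competitor. Take a self-similar branching tree started from a uniform-like distribution of $N$ equal masses at scale $r$ at $t=0$, merging dyadically until a single Dirac $\delta_0$ at a finite time $T$. Its $H^{-1/2}$ norm is finite once the masses are spread, and the internal energy is bounded by a geometric series. This gives $e<\infty$ and a reference value $e_*\ge e$. We also note $e>0$, since $\norm{\delta_0}_{H^{-1/2}}=\infty$ and the first part of $I$ is nonnegative by subadditivity of $\sqrt{\cdot}$.

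The core is two a priori estimates for minimizers, or for almost-minimizers obtained after replacing any competitor by a better one.

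\textbf{Time localization.} For $t$ large, if $\mu_t$ is not a single Dirac, then $\sum_i\varphi_i^{1/2}-1\ge c(\#\text{particles},\text{masses})>0$. More quantitatively, $\sum_i\varphi_i^{1/2}-1\gtrsim \min_i \varphi_i^{1/2}$ for two or more particles, using $\sqrt a+\sqrt b-\sqrt{a+b}\ge (2-\sqrt2)\min(\sqrt a,\sqrt b)$ iterated. We then perform a cut-and-merge surgery. On $[t_0,t_0+\tau]$, transport all particles to their barycenter $0$ along straight lines. This costs $\tau+\frac{1}{\tau}\sum_i\varphi_i|X_i(t_0)|^2$ and removes all later perimeter cost. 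Combining this with the bound $\int_0^\infty(\sum\varphi_i^{1/2}-1)\le e_*$, we choose $t_0$ where the excess and the second moment are small. Here the second moment is controlled via $\int\sum\varphi_i|\dot X_i|^2\le e_*$ together with the center of mass being conserved, since the barycenter is constant along the flow by mass conservation. This yields a bound $T\le T_0$ depending only on $e_*$, provided the excess at $t_0$ controls the number of particles and their spread. This is a variant of the standard ``branching vanishes in finite time'' argument used in \cite{CGOS,dephilippis2023energy}.

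\textbf{Space localization.} The kinetic term gives $|X_i(t)-X_i(s)|^2\le |t-s|\int\varphi_i^{-1}\ldots$, which is not directly useful for small masses. Instead we project. Let $P_R$ be the $1$-Lipschitz radial retraction onto $B_R$ and replace $X_i$ by $P_R X_i$. Kinetic energy does not increase. Perimeter does not increase, since particles can only merge and merging decreases $\sum\sqrt{\varphi}$ by subadditivity. The boundary term $\norm{(P_R)_\#\mu_0}_{H^{-1/2}}$ may increase, so instead we show that mass far away costs energy. By time localization, every trajectory reaches $0$ by time $T_0$. A mass-$\varphi$ particle starting at distance $\ge R$ therefore pays kinetic energy at least $\varphi R^2/T_0$, while the perimeter term forbids very small masses from traveling alone for long. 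Balancing these shows that the mass outside $B_R$ at time $0$ is at most $CT_0e_*/R^2$. Then a replacement is used: delete the far mass and redistribute it uniformly on a fixed ball, adding a short merging path. This shows that for $R\ge R_0$ the modified competitor has strictly lower energy, unless there is no far mass. The $H^{-1/2}$ change is estimated using the interaction kernel $|x-y|^{-1}$: far mass contributes little to the cross term, and its self-energy is nonnegative.

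\textbf{Compactness.} For a minimizing sequence localized as above (supports in $B_{R_0}\times[0,T_0]$), we use the compactness and lower semicontinuity theory for branched-transport-type functionals from \cite{CGOS,dephilippis2023energy}. The kinetic bound gives equicontinuity of $t\mapsto\mu_t$ in Wasserstein distance. The perimeter-type term is lower semicontinuous under narrow convergence of $\mu_t$ for a.e. $t$ by Fatou, since $\nu\mapsto\sum\nu(\{x\})^{1/2}$ is narrowly l.s.c. $H^{-1/2}$ is l.s.c. as well, and the constraints pass to the limit by tightness.

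The main obstacle I expect is the time-localization surgery: the number of particles is not a priori bounded, so we must show that a small excess forces most mass into one particle with small spread. I would try a lemma that excess $\le\eta$ implies there is a particle of mass $\ge 1-C\eta^2$.
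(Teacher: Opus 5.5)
There is a genuine gap, and it is where you expect it: the uniform bounds on the support in time and space. All your competitors are global energy comparisons. Such comparisons cannot force $\mu_t=\delta_0$ \emph{exactly} after a universal time, nor exclude far mass \emph{exactly}. A particle of mass $\varphi$ travelling alone for a time $S$ adds only about $S\varphi^{1/2}$ to $\mathcal{E}$. So the bound $\int_0^\infty \dot P(\mu,t)\,dt\le e_*$ still allows $S\sim e_*\varphi^{-1/2}$, which is unbounded as $\varphi\to0$. Your cut-and-merge competitor does not help: it only bounds the tail energy after $t_0$, not that the tail ends. The lemma you propose at the end is essentially a quantitative form of Proposition \ref{prop:smallper}. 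It is true, but it only says that \emph{most} of the mass has merged, and says nothing about the leftover mass of order $\eta^2$.

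The spatial step has the same defect. Re-inserting a far mass $m$ on a fixed ball with its own merging path creates a new branch whose perimeter is of order $m^{1/2}$ per unit time. For small $m$ this costs more than any fixed multiple of $m$, so it cannot be paid for by a gain linear in $m$ such as $mR^2/T_0$, a gain that moreover presupposes the time bound. The Chebyshev bound $\mu_0(B_R^c)\lesssim T_0e_*/R^2$ is as far as your argument goes. Also, the barycenter is not conserved by mass conservation: by Lemma \ref{lem:barder}, $\dot X=m_t(\R^2)/\Phi$. It is constant for minimizers of \eqref{e:minintro} only thanks to Lemma \ref{lem:galileianshift}, and it moves linearly in the problems with $\mu_T=\delta_0$.

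The missing idea is a variation whose cost is \emph{linear} in the displaced mass, which gives a bound per trajectory rather than per configuration. The paper obtains it from the Lagrangian formulation (Proposition \ref{prop:lagrange}). Reweighting $\mu_0\mapsto(1+\eps\psi)\mu_0$ while keeping the trajectories $X(x,t)$ loads mass onto \emph{existing} branches. By concavity of the square root, this costs at most $\eps\int z\psi\,d\mu_0$, exactly and not just to first order; see \eqref{e:landvar}. This yields $z+2u=K$ $\mu_0$-a.e.\ (Proposition \ref{prop:landfirst}). Since $u\ge0$ and $K-T/2\le2\mathcal{E}(\mu)\lesssim1$ by Corollary \ref{cor:comp}, every trajectory satisfies $z(x)-T/2\lesssim1$ (Lemma \ref{lem:bddland}). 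Two consequences follow:
\begin{itemize}
\item By the no-loop property, the multiplicity along a trajectory is nondecreasing. A trajectory with multiplicity $\le1/2$ at time $\tilde T$ therefore already pays $\frac{\tilde T}{2}(\sqrt2-1)$, which bounds $\tilde T$.
\item Cauchy--Schwarz along the trajectory gives $\abs{x-\overline{X}}\le\overline{T}^{1/2}z(x)^{1/2}\lesssim\overline{T}^{1/2}$, which bounds the spatial support.
\end{itemize}

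Such minimality-based bounds cannot be applied to the elements of a minimizing sequence, so your compactness step has nothing to act on as written. The paper proceeds in three steps:
\begin{itemize}
\item It first minimizes in the constrained classes $e(R,T)$, with existence from \cite{CGOS}.
\item It proves the bounds above independently of $R,T$, which gives $e(R,T)=e(R_0,T_0)$.
\item It then uses the density result, Lemma \ref{lem:densityen}. There the radially projected far mass is spread on an annulus $B_{R_n}\setminus B_{R_n-1}$, so that Proposition \ref{prop:linfest} controls its $H^{-1/2}$ norm. This is precisely the issue with $(P_R)_\#\mu_0$ that you noticed but left open.
\end{itemize}
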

\begin{remark}
 From the statement of Theorem \ref{theo:existence} it is very tempting to believe that every minimizer of \eqref{e:minintro} is compactly supported. Since a proof  would require the extension of  some of the results from \cite{CGOS,dephilippis2023energy} to the non-compact setting, which is exactly what we tried to avoid by reducing ourselves to compactly supported minimizers, we do not investigate this further.
\end{remark}
Let us sketch the proof of Theorem \ref{theo:existence}. For $R,T>0$ we consider the following minimization problem:
\begin{equation}
\label{e:RTprobintro}
    e(R,T)=  \inf \left\{\mathcal{E}(\mu) \colon  \supp{\mu_0}\subset [-R/2,R/2]^2, \mu_T=\delta_0\right\}.
\end{equation}
For this problem, existence follows by adapting the proof of  \cite{CGOS}. Using a Lagrangian reformulation of the problem (Proposition \ref{prop:lagrange}), we are then able to prove a priori bounds on the support (Lemma \ref{lem:a priori}) for  minimizers of \eqref{e:RTprobintro} which do not depend on $R$ and $T$ from which Theorem \ref{theo:existence} follows.
\begin{remark}
    The Lagrangian reformulation of the problem we use is closely related to the concept of "landscape function", which is  a key tool in classical branched transport \cite{brancsol,holderland,PegPet23,pegonfractal2018,landscape}. In this paper, we just exploit its first variation, but we mention that in \cite{brancsol,PegPet23,pegonfractal2018} such a tool has been used to prove (fractal) properties of irrigated measures.
\end{remark}

Our main result is then the following.
\begin{theorem}
\label{th:mainch5}
    Let $\mu$ be a compactly supported minimizer of \eqref{e:minintro}. Assume  that there exist $x_0\in \R^2$ and $r_0>0$ such that $\mu_0(B(x_0,r_0/2))>0$ and $\mu_0\LL B(x_0,r_0)$ is upper $\alpha$-Ahlfors regular for some $\alpha>1, M,r_*>0$ (see Definition \ref{def:Ahlfors}),  then $\alpha\leq 8/5$.
\end{theorem}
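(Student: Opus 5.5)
We follow the strategy of \cite{branched} adapted to 3D: combine a first variation argument with an interpolation estimate, both localized near $x_0$ and at scales $r\ll r_0$ where Ahlfors regularity holds. Upper $\alpha$-Ahlfors regularity of $\mu_0\LL B(x_0,r_0)$ gives $\mu_0(B(x,r))\le M r^\alpha$ for $x\in B(x_0,r_0)$, $r\le r_*$. Since $\mu_0(B(x_0,r_0/2))>0$, we can find many small balls inside $B(x_0,r_0)$ carrying mass. The goal is to show that for every small $r$ one has a lower bound on the local mass/energy that becomes incompatible with $\mu_0(B_r)\lesssim r^\alpha$ unless $\alpha\le 8/5$.

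\textbf{Step 1 (first variation).} Using the Lagrangian reformulation (Proposition \ref{prop:lagrange}) and the fact that $\mu$ is a compactly supported minimizer, we perturb $\mu_0$ by moving a small amount of mass from one point of the support to another (or by a horizontal dilation near $x_0$). The resulting Euler--Lagrange condition links the $H^{-1/2}$ potential $h=(-\Delta)^{-1/2}\mu_0$ on $\supp\mu_0$ to the landscape-type quantity of the branched transport: roughly, $h$ is constant up to the marginal cost of irrigating a point, and the oscillation of $h$ on $\supp\mu_0\cap B(x,r)$ is controlled by the branched transport cost of rerouting mass at scale $r$. Evaluating the cost of a fiber of mass $\varphi$ traveling a horizontal distance $r$ over time $\tau$, i.e.\ $\varphi r^2/\tau+\tau\varphi^{1/2}$ optimized over $\tau$, gives a first-variation bound of order $r\varphi^{-1/4}$ per unit mass. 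Together with Ahlfors regularity, $\varphi\lesssim r^\alpha$, this yields a lower bound on the local variation of $h$.

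\textbf{Step 2 (interpolation).} In the other direction, we bound the $H^{-1/2}$ potential from above: $|h(x)-h(y)|$ at scale $r$ is controlled by $\mu_0(B(x,\rho))/\rho$ summed dyadically. Upper $\alpha$-regularity gives $\int\! \mu_0(B(x,\rho))\rho^{-2}d\rho\lesssim r^{\alpha-1}$ for $\alpha>1$. This is why $\alpha>1$ is assumed. Alternatively, as in \cite{dephilippis2023energy}, we use an interpolation inequality bounding the transport (Wasserstein-type) distance between $\mu_0\LL B(x,r)$ and its time-$\tau$ slice by the kinetic and perimeter energy, versus the $H^{-1/2}$ norm.

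\textbf{Step 3 (exponent matching).} Comparing the two scalings, the first variation forces a cost at least of order $r^{1-\alpha/4}$ per unit mass, while the interpolation or potential estimate gives at most $r^{\alpha-1}$ times a constant. For $r\to0$ this requires $1-\alpha/4\ge \alpha-1$, i.e.\ $\alpha\le 8/5$. This reproduces the conjectured threshold.

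The main obstacle is Step 1. We must make the first variation rigorous in 3D, where fibers can branch in complex ways. We also need to localize it so that only mass near $x_0$ is perturbed, while the perturbed configuration stays admissible (mass one, barycenter zero, compact support, with the bounds from Lemma \ref{lem:a priori}). I would first try to construct the variation at the Lagrangian level, rerouting the trajectories ending in $B(x,r)$ along straight segments over a time interval of length $\tau\sim r\varphi^{-1/4}$. A separate compensating variation would restore the barycenter.
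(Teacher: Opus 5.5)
Your exponent bookkeeping is right: cost per unit mass $r\varphi^{-1/4}$ against the H\"older modulus $r^{\alpha-1}$ of the potential gives $\alpha\le 8/5$. Step~2 is also exactly the paper's new ingredient (Propositions~\ref{prop:holderpotential} and~\ref{prop:localholder}). The gap is Step~1: it does not produce the lower bound that Step~3 uses, and the two inequalities you compare end up pointing the same way. Take a competitor that reroutes mass $\varphi$ from near $x$ to near $y$, with $\abs{x-y}\sim r$, at extra bulk cost $\sim r\varphi^{3/4}$. By minimality it only yields $\varphi\,(u(x)-u(y))\lesssim r\varphi^{3/4}$, up to self-interaction terms. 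That is an \emph{upper} bound on the oscillation of $u$, just like Step~2, so nothing can be played off. A pointwise \emph{lower} bound on the oscillation of $u$ on $\supp\mu_0$ is not available either. By \eqref{e:first}, $2u=K-z$ holds $\mu_0$-a.e., and nothing forces the landscape function $z$ to vary by $r\varphi^{-1/4}$ at scale $r$; along a self-similar symmetric branching pattern it is even constant on the support. Also, the optimal time in $\varphi r^2/\tau+\tau\varphi^{1/2}$ is $\tau\sim r\varphi^{1/4}$, not $r\varphi^{-1/4}$.

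The missing idea is to bound the \emph{energy of a backward subsystem} from both sides. Let $\mu'$ be the subsystem merging at a small time $T$ into one atom of mass $\Phi$, and let $X$, $r$ be the barycenter and spreading scale of $\mu'_0$.

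\emph{From below:} after removing the barycenter motion (Lemmas~\ref{lem:galileianshift} and~\ref{lem:shiftmin}), the interpolation-type Lemma~\ref{lem:l1bound} gives $\Phi^{3/4}r\lesssim I(\mu')-\Phi\abs{X}^2/T$. This is where your $r\varphi^{3/4}$ legitimately appears, and it is not a first-variation statement.

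\emph{From above:} shrink the whole centered subsystem by a factor $2$ about its moving barycenter. Its perimeter does not increase, its kinetic energy drops by a factor $4$, and the mass and barycenter of $\mu_0$ are preserved, so no compensating variation is needed. Minimality then gives $E(\mu')-\Phi\abs{X}^2/T\lesssim\int (u-u(X))\,d(\tilde\mu'_0-\mu'_0)+\norm{\mu'_0}^2_{H^{-1/2}(\R^2)}$. Only at this point does H\"older continuity enter, bounding the first term by $M\Phi r^{\alpha-1}$; a finite rather than infinitesimal dilation is used since $u$ is only H\"older.

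To close the argument you also need three more ingredients:
\begin{enumerate}
\item the equipartition \eqref{e:equipartition} with $\Lambda\le 0$, to pass from $E$ to $I=P+E$;
\item the self-energy bound \eqref{e:energyboundphi}, which scales critically at $\alpha=8/5$ (the paper absorbs it by working at exponent $8/5$ with $M$ arbitrarily small, Remark~\ref{rem:contr});
\item $\Phi\lesssim Mr^{\alpha}$ from \eqref{e:rphilower}, to couple $\Phi$ and $r$.
\end{enumerate}
Your proposal mentions a dilation and an interpolation inequality only in passing, and assigns them the wrong roles: the interpolation estimate is the lower bound, not an alternative to the H\"older estimate. The barycenter removal, equipartition and self-energy control are absent.
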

\begin{remark}
    An immediate consequence of Theorem \ref{th:mainch5} is that $\mu_0\notin L^{\infty}(\R^2)$.  More generally, by H\"older inequality, for any ball $B=B(x_0,r)$ such that $\mu(B(x_0,r/2))>0$ one has $\mu\notin L^{p}(B)$ for all $p>5$.
\end{remark}
Our Theorem \ref{th:mainch5} extends the corresponding result from \cite{branched} in two directions. First and foremost, it treats the physical 3D setting. Second, it is a local result whereas   \cite{branched} was a global one.

Let us now comment on the proof and point out the main difference with the 2D case.
 We fix a ball $B(x_0,r_0/2)$ of positive mass on which the boundary measure is upper $\alpha$-Ahlfors regular. We consider a subsystem, see Proposition \ref{prop:subsy}, supported in the ball $B(x_0,r_0/2)$. This is obtained as  a consequence of the a priori bounds from Lemma \ref{lem:a priori}. After subtracting  the contribution to the kinetic energy coming from the barycenter, see Lemma \ref{lem:galileianshift}, we construct a competitor similar to the one from \cite{branched}: we shrink the subsystem around its barycenter. This keeps the perimeter unchanged and reduces the kinetic energy. By minimality, this yields a lower bound on the induced variation for the boundary term by the kinetic energy, see \eqref{e:variation}. The main difference with respect to \cite{branched} then lies in the way we estimate the boundary term from above. The argument from  \cite{branched} was based on the fact that thanks to the no-loop condition,  the map from $\mu_0$ to $\mu_t$ induced by the branched structure coincide with optimal transport map for $W_2(\mu_0,\mu_t)$. This is true only in 2D.  Here  we use instead  the following observation: if a measure $\sigma\in\mathcal{M}^+(\R^2)$ is upper $\alpha$-Ahlfors regular for $\alpha\in [1,2]$, then its electrostatic potential $u:\R^2\rightarrow \R$, defined as
\begin{equation}\label{e:potential}
    u(x)=\int_{\R^2}\frac{1}{\abs{x-y}}\,d\mu(y),
\end{equation}
must be $(\alpha-1)$-H\"older continuous, see Proposition \ref{prop:holderpotential}. This  may be interpreted as a  Schauder estimate for the half-Laplacian, see Remark \ref{rem:schauder}. Since $u$ is the first variation of $\|\mu_0\|_{H^{-1/2}(\R^2)}$ its regularity may be used to obtain the desired upper bound. It is in this step that it is convenient to work on the whole space and have the explicit representation formula \eqref{e:potential}.  We conclude the proof by a lower bound on the kinetic energy which gives a contradiction if $\alpha>8/5$. While the exact form and implementation differ from \cite{dephilippis2023energy} this step is reminiscent of the interpolation argument from \cite[Theorem 3.7]{dephilippis2023energy}. \\

The rest of the paper is structured as follows: In Section \ref{sec:ch2prel} we set notation, recall some notions from optimal transport theory and Sobolev norms and define the energy functional. Section \ref{sec:existence} is dedicated to the existence of minimizers. We start by dealing with shifts and constructions for the energy $\mathcal{E}$ in Section \ref{sec:baricen}. Then we introduce the problems \eqref{e:RTprobintro} in Section \ref{sec:bounded}, for which we prove a priori bounds in Section \ref{sec:lagrangian} thanks to a Lagrangian reformulation of the problem. We later prove existence for problem \eqref{e:minintro} in Section \ref{sec:existancefull}, and characterize some properties of subsystems of minimizers in Section \ref{sec:subsys}. Finally, in Section \ref{sec:holder} we prove H\"older continuity of the potential $u$ and in Section \ref{sec:main} we prove the concentration result.

\section{Preliminaries}
\label{sec:ch2prel}
\subsection{Notation}
We use the symbols $\lesssim$ and $\gtrsim$ to indicate that there exists $C>0$ such that  $a \leq C b$ and $a\geq C b$ respectively. Additionally, we denote by $ a \sim b$ a relation that satisfies $a\lesssim b$ and $a\gtrsim b$.  We write $a \lesssim_p b$ to indicate $a\leq C b$ for some constant $C=C(p) > 0$, and similarly for $a\gtrsim_p b$, $a\sim_p b$. We use $A\ll B$ as an hypothesis. It means that there exists $\eps>0$ such that for $A\leq \eps B$ the conclusion holds. We write $B(x,r)$ for a ball centered at $x\in\R^2$ of radius $r>0$. We simply write $B_r$ when $x=0$. We denote by $\mathcal{M}(\R^2)$ the set of finite measures on $\R^2$ (respectively $\mathcal{M}(\R^2\times \R^2)$ for measures on $\R^2\times\R^2$ ), by $\mathcal{M}^+(\R^2)$ the set of positive measures, and by $\mathcal{P}(\R^2)$ the set of probability measures on $\R^2$. For $\mu\in\mathcal{M}^+(\R^2)$, we usually denote by $\mu(\R^2)=\Phi$ its total mass. We denote by $\mathcal{M}_2^+(\R^2)$ the positive measure on $\R^2$ with finite second moment, analogously we use $\mathcal{P}_2(\R^2)$ for probability measures. For a compact set $K\subset \R$, we denote by $\mathcal{M}(\R^2 \times K)$ the set of finite measures on $\R^2\times K$ and by $\mathcal{M}^+(\R^2\times K)$ the set of positive measures on $\R^2\times K$. For $0\leq a<b\leq \infty $, we denote by $\mathcal{M}_{\textup{loc}}(\R^2\times(a,b))$ (respectively $\mathcal{M}_{\textup{loc}}^+(\R^2\times(a,b))$) the set of Radon measures $\mu$ such that for any compact set $K\subset (a,b)$ one has $\mu\in \mathcal{M}(\R^2\times K)$ (respectively $\mu\in \mathcal{M}^+(\R^2\times K)$).  We denote by $C_0(\R^2)$ the set of continuous functions on $\R^2$ vanishing at infinity. If $f$ is a function and $\mu\in \mathcal{M}^+(\R^2)$ a measure, we denote $f\#\mu$ the pushforward of $\mu$ by $f$. We denote by $\mathcal{H}^1$ the $1$-Haussdorff measure. For a measure $\mu\in \mathcal{M}(\R^2)$ and a measurable set $A$, we denote by $\mu\LL A$ the measure such that $\mu\LL A(B)=\mu(A\cap B)$ for all measurable sets $B\subset \R^2$.
        \subsection{Optimal transport and Sobolev norms for measures}
        \label{sec:otsob}
        We recall some basic facts and definitions about optimal transport and negative Sobolev norms.
        We refer to \cite{AGS,santambrogio2015optimal} for a broader overview of optimal transport.
If $\mu_1,\mu_2\in \mathcal{M}^+(\R^2)$ with $\mu_1(\R^2)=\mu_2(\R^2)$, we define 
\begin{equation}
    W^2(\mu_1,\mu_2)=\min\left\{\int_{\R^2\times \R^2}\abs{x-y}^2\,d\pi \colon \pi \in \mathcal{M}^+(\R^2\times \R^2), \ \pi_i=\mu_i \textup{ for } i=1,2\right\}.
\end{equation}
 Let us recall the Benamou-Brenier formula: for $a,b\in\mathbb{R}$ with $a<b$ we have
\begin{multline}
\label{e:ber}
    \frac{1}{b-a}W^2(\mu_1,\mu_2)=\\ \min_{(\mu,m)}\left\{\int_{\R^2\times (a,b)}\abs{\frac{dm}{d\mu}}^2d\mu d t \colon  m\ll \mu, \ \partial_t\mu +\nabla\cdot m=0, \ \mu_a=\mu_1, \ \mu_b=\mu_2 \right\}.
\end{multline}
 For any $\mu\in \mathcal{M}^+( \R^2)$, if the  electrostatic potential $u$ is defined by \eqref{e:potential}, we define the weak Sobolev norm (see \cite{liebloss}) 
\begin{equation}
\label{e:sobolevpotential}
    \norm{\mu}_{H^{-1/2}(\R^2)}^2=\int_{ \R^2}u(x)\,d\mu(x)=\int_{ \R^2}\int_{\R^2}\frac{1}{\abs{x-y}}\,d\mu(x)d\mu(y).
\end{equation}

We will make use of the following inequality. 
\begin{proposition}
\label{prop:linfest}
    Let $\mu\in \mathcal{M}^+( \R^2)\cap L^{\infty}(\R^2)$ with $\mu(\R^2)=\Phi$. Then
    \begin{equation}
        \norm{\mu}_{H^{-1/2}(\R^2)}^2\lesssim \norm{\mu}_{L^{\infty}(\R^2)}^{1/2}\Phi^{3/2}
    \end{equation}
\end{proposition}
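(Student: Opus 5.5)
The plan is to bound the potential $u$ pointwise and then integrate against $\mu$. Write $\|\mu\|_{H^{-1/2}}^2=\int u\,d\mu$ with $u(x)=\int |x-y|^{-1}\,d\mu(y)$. If we show
\begin{equation}
\sup_x u(x)\lesssim \norm{\mu}_{L^\infty}^{1/2}\Phi^{1/2},
\end{equation}
the claim follows at once, since $\int u\,d\mu\le \Phi\sup u$.

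To get the pointwise bound, fix $x\in\R^2$ and a radius $r>0$ to be chosen. Set $L=\norm{\mu}_{L^\infty}$ and split the integral defining $u(x)$ into $B(x,r)$ and its complement.

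On the ball, we use the density bound:
\begin{equation}
\int_{B(x,r)}\frac{d\mu(y)}{|x-y|}\le L\int_{B(0,r)}\frac{dz}{|z|}=2\pi L r.
\end{equation}
The singularity $|z|^{-1}$ is integrable in two dimensions, so this term is finite.

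Off the ball, we use $|x-y|^{-1}\le r^{-1}$, which gives
\begin{equation}
\int_{B(x,r)^c}\frac{d\mu(y)}{|x-y|}\le \frac{\Phi}{r}.
\end{equation}

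Adding the two pieces gives $u(x)\lesssim Lr+\Phi/r$. The choice $r=(\Phi/L)^{1/2}$ balances them and yields $u(x)\lesssim (L\Phi)^{1/2}$, uniformly in $x$. Multiplying by $\Phi$ then gives $\norm{\mu}_{H^{-1/2}}^2\lesssim L^{1/2}\Phi^{3/2}$.

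There is no real obstacle here; the only step needing care is the choice of the splitting radius. If $L=0$, then $\mu=0$ and the statement is trivial. As an alternative route, one could instead use the bathtub principle: among densities bounded by $L$ with mass $\Phi$, the integral $\int|x-y|^{-1}\,d\mu(y)$ is maximized by $L\mathbf 1_{B(x,\rho)}$ with $\pi\rho^2L=\Phi$. This gives the same bound.
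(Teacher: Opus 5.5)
Your proposal is correct and follows the paper's proof: both bound $u(x)$ pointwise by $\norm{\mu}_{L^\infty}R+\Phi/R$ through a near/far split, optimize in $R$, and then integrate against $\mu$. The remarks on $L=0$ and on the bathtub principle are fine, but they are not needed.
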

\begin{proof}
    Let $R>0$. For all $x\in \R^2$
    \begin{multline}
        u(x)=\int_{\R^2}\frac{1}{\abs{x-y}}\,d\mu(y)=\int_{\abs{x-y}\leq R}\frac{1}{\abs{x-y}}\,d\mu(y)+\int_{\abs{x-y}>R}\frac{1}{\abs{x-y}}\,d\mu(y)\\
        \leq \norm{\mu}_{L^{\infty}(\R^2)}\int_{\abs{x-y}\leq R}\frac{1}{\abs{x-y}}\,dy +\frac{\Phi}{R}\lesssim\norm{\mu}_{L^{\infty}(\R^2)}\int_{0}^R\frac{1}{r}r\,dr +\frac{\Phi}{R}=R\norm{\mu}_{L^{\infty}(\R^2)}+\frac{\Phi}{R}.
    \end{multline}
    Optimizing in $R$, we can conclude that for all $x\in \R^2$
    \begin{equation}
        u(x)\lesssim \Phi^{1/2}\norm{\mu}_{L^{\infty}(\R^2)}^{1/2},
    \end{equation}
    which implies 
    \begin{equation}
         \norm{\mu}_{H^{-1/2}(\R^2)}^2=\int_{ \R^2}u(x)\,d\mu(x)\lesssim \Phi^{3/2}\norm{\mu}_{L^{\infty}(\R^2)}^{1/2}.
         \end{equation}
\end{proof}

\subsection{The energy functional}
\label{sec:energy}
    We denote by $\mathcal{A}(\R^2, (a,b))$ for $0\leq a <b \leq +\infty$ the set of measures $\mu\in \mathcal{M}_{\textup{loc}}^+(\R^2\times(a,b))$, $m\in\mathcal{M}_{\textup{loc}}(\R^2\times(a,b);\R^2)$, with $m\ll\mu$ satisfying the continuity equation 
    \begin{equation}
        \label{e:continuity}
        \int_{\R^2\times (a,b)}\partial_t\phi(x,t)\,d\mu+ \int_{\R^2\times (a,b)}\nabla_x \phi(x,t) \cdot dm=0 \quad \forall \phi\in C^{\infty}_{c}(\R^2 \times (a,b)),
    \end{equation}
    and such that $\mu=\mu_t\otimes dt$ and $m=m_t\otimes dt$ where, for every $t\in(0,+\infty)$ there holds $\mu_t\in \mathcal{M}^+_2(\R^2)$ and for almost every $t\in(0,+\infty)$ there holds $\mu_t=\sum_i\varphi_i(t)\delta_{X_i(t)}$, for some $\varphi_i(t)>0$ and $ X_i(t)\in \R^2$.
    \begin{remark}
\label{rem:goodtest}
    By a regularization argument, see \cite[Remark 8.1.1]{AGS}, \eqref{e:continuity} holds for test functions $\phi \in C^{1}( \R^2\times (a,b))$, bounded, with bounded gradient and whose support have compact projections on $(a,b)$ (in particular the support does not need to be compact in $x$).
\end{remark}
We denote by $\mathcal{A}^*(\R^2, (a,b))=\{\mu: \exists m ,\, (\mu, m)\in \mathcal{A}(\R^2, (a,b))\}$ the set of admissible $\mu$. We use the notation $\mathcal{A} = \mathcal{A}(\R^2, (0, +\infty))$ and $\mathcal{A}^*= \mathcal{A}^*(\R^2, (0, +\infty)).$ Notice that by \eqref{e:continuity}, and Remark \ref{rem:goodtest}, $\mu_t(\R^2)$ is independent of $t$, and we denote it by $\Phi$.
    We define the functional $I:\mathcal{A}(\R^2, (a, b))\rightarrow[0,+\infty]$ by 
    \begin{equation}
        \label{e:intenergy}
        I(\mu,m,(a,b))=\int_{a}^b\left(\sum_{x'\in \R^2}(\mu_t(\{x'\}))^\frac{1}{2}\right)-\Phi^\frac{1}{2}\, d t + \int_{\R^2\times (a,b)}\abs{\frac{dm}{d\mu}}^2\, d \mu.
    \end{equation}
    We also define (with abuse of notation) the functional $I:\mathcal{A}^*(\R^2, (a,b))\rightarrow[0,+\infty]$
    \begin{equation}
        \label{e:intenergymu}
        I(\mu,(a,b))=\min\{I(\mu,m,(a,b))\colon (\mu,m)\in \mathcal{A} \}.
    \end{equation}
    Notice that by \cite{santambrogio2015optimal}, for any $\mu\in \mathcal{A}^*(\R^2, (a,b))$ there exists a unique $m$ such that $I(\mu,(a,b))=I(\mu,m,(a,b))$.  We use the notation $I(\mu)=I(\mu,(0,+\infty))$ and $I(\mu,m)=I(\mu,m,(0,+\infty))$.
    We then set for $0\leq a< t<b< +\infty$
    \begin{equation}
        \label{e:PEcindot}
        \dot{P}(\mu,t)=\left(\sum_{x\in \R^2}(\mu_t(x'))^\frac{1}{2}\right)-\Phi^{\frac{1}{2}}, \quad \dot{E}(\mu,t)=\int_{ \R^2}\abs{\frac{dm_t}{d\mu_t}}^2\, d \mu_t
    \end{equation}
    and for $0\leq a< t<b\leq +\infty$
        \begin{equation}
        \label{e:PEcin}
        P(\mu,(a,b))=\int_{a}^b\dot{P}(\mu,t)dt, \quad E(\mu,(a,b))=\int_{a}^b\dot{E}(\mu,t)dt,
    \end{equation}
    so that $I(\mu,(a,b))=P(\mu,(a,b))+E(\mu,(a,b))$.
We also use the notation $P(\mu)=P(\mu,(0,+\infty))$ and $E(\mu)=E(\mu,(0,+\infty))$.

As in \cite{branched,dephilippis2023energy}, for $0\leq a <b< +\infty$ and $\mu\in \mathcal{A^*}$, by Benamou-Brenier formula \eqref{e:ber} we have
    \begin{equation}
        \label{e:cinest}
        E(\mu,(a,b))\geq\frac{1}{b-a} W_2^2(\mu_a,\mu_b).
    \end{equation}
By \eqref{e:cinest}, if $I(\mu)<\infty$ then for $0< a<b< +\infty$
\begin{equation*}
    W_2(\mu_a,\mu_b)\leq I(\mu)^{1/2} (b-a)^{1/2}\lesssim_\mu (b-a)^{1/2}.
\end{equation*}
This means that $t\rightarrow\mu_t$ is a ${1/2}$-H\"older continuous curve in the Wasserstein space $\mathcal{M}^+_2(\R^2)$. In particular, $\mu_0$  is  well defined. 
When the perimeter term $\dot{P}(\mu,t)$ is small, most of the mass needs to concentrate in a single Dirac delta. We make this explicit in the following proposition. 
\begin{proposition}
\label{prop:smallper}
    Let  $(\varphi_i)_{i\in \N}\subset \R^+$ such that $\sum_i\varphi_i=\Phi$ and $\varphi_0\geq \varphi_i$ for all $i\in \N$. Suppose that for $\eps \ll 1$ 
    \begin{equation}
          \sum_{i}\varphi^{1/2}_i-\Phi^{1/2}\leq \eps\Phi^{{1/2}}.
    \end{equation}
    Then there exists $\delta=\delta(\eps)>0$ such that
    \begin{equation}
        \Phi-\varphi_{0}\leq \delta \Phi.
    \end{equation}
\end{proposition}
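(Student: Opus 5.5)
By homogeneity we may normalize $\Phi=1$. Set $s_i=\varphi_i^{1/2}$; then $\sum_i s_i^2=1$ and the hypothesis becomes $\sum_i s_i\le 1+\eps$. The idea is that the $\ell^1$ norm of $s$ controls its $\ell^2$ norm from above. Mass spread over several atoms makes $\sum_i s_i$ strictly larger than $1$, with a quantitative gap as long as the largest atom is not close to carrying everything.

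The key inequality is elementary. Since $s_i\le s_0$ for all $i$,
\begin{equation}
1=\sum_i s_i^2\le s_0\sum_i s_i\le s_0(1+\eps).
\end{equation}
This only gives $s_0\ge (1+\eps)^{-1}$, i.e. $\varphi_0\ge(1+\eps)^{-2}$, which is already enough. We get $1-\varphi_0\le 1-(1+\eps)^{-2}\le 2\eps$. So one can take $\delta(\eps)=1-(1+\eps)^{-2}$, and then $\delta\to0$ as $\eps\to0$, which is the content of the statement. Undoing the normalization gives $\Phi-\varphi_0\le\delta\Phi$.

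Alternatively, and as a sanity check on sharpness, split the sum as $\sum_{i\ge1}s_i\ge\bigl(\sum_{i\ge1}s_i^2\bigr)^{1/2}=(1-\varphi_0)^{1/2}$. Combined with $s_0=\varphi_0^{1/2}$, this gives
\begin{equation}
\varphi_0^{1/2}+(1-\varphi_0)^{1/2}\le 1+\eps .
\end{equation}
Writing $\varphi_0=1-\theta$, the left side is at least $1-\theta+\theta^{1/2}$. So $\theta^{1/2}-\theta\le\eps$, which for $\theta\le 1/4$ gives $\theta\lesssim\eps^2$. This second route gives the better dependence $\delta\sim\eps^2$, but it requires knowing a priori that $\theta$ is not large. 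The first argument supplies exactly that, since it gives $\theta\le2\eps\le 1/4$ for $\eps\ll1$.

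There is essentially no obstacle. The only point needing care is that the maximality of $\varphi_0$ is what allows the bound $\sum_i s_i^2\le s_0\sum_i s_i$. I would present the first argument as the proof and mention the $\eps^2$ improvement as a remark if later sections need it.
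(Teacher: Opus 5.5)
Your proof is correct, and your main argument is more direct than the paper's. After normalizing $\Phi=1$, the paper first uses subadditivity to get $\varphi_0^{1/2}+(1-\varphi_0)^{1/2}-1\le\eps$. It then reads off from the graph of $t\mapsto t^{1/2}+(1-t)^{1/2}-1$ the dichotomy ``$\varphi_0\le\delta$ or $1-\varphi_0\le\delta$''. Only then does it exclude the first alternative, via $1=\sum_i\varphi_i\le\delta^{1/2}\sum_i\varphi_i^{1/2}\lesssim\delta^{1/2}$.

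Your inequality $1=\sum_i s_i^2\le s_0\sum_i s_i$ is that exclusion step, with $\varphi_0$ itself in place of $\delta$ and the hypothesis $\sum_i s_i\le 1+\eps$ inserted directly. Used this way it carries the whole proof on its own. It gives the explicit $\delta=1-(1+\eps)^{-2}\le 2\eps$ for every $\eps>0$, with no dichotomy and no appeal to the shape of a graph.

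Your second argument is the paper's subadditivity step with the two stages in the opposite order: first rule out large $\theta$, then refine. What it buys is the sharper rate $\delta\sim\eps^2$, which the paper leaves implicit. That rate is never needed later. In Lemma~\ref{lem:l1bound} one only needs $\varphi_0>\Phi/2$, and in Step 2 of Lemma~\ref{lem:densityen} one only needs $\delta(\eps)\to 0$. Both are covered by your first argument, so presenting it as the proof is the right call.
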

\begin{proof}
    By scaling we may suppose $\Phi=1$.
    By subadditivity, there holds 
    \begin{equation}
        \eps \geq \sum_{i}\varphi^{1/2}_i-1 \geq \left(\sum_{i\geq 1}\varphi\right)^{1/2}+ \varphi_0^{1/2}-1=(1-\varphi_0)^{1/2}+ \varphi_0^{1/2}-1.
    \end{equation}
   By looking at the graph of $t\rightarrow (1-t)^{1/2}+t^{1/2}-1$, we can conclude that there exists $\delta=\delta(\eps)>0$ such that either
    \begin{equation}
        \varphi_0 \leq \delta \qquad\text{ or }\qquad 1-\varphi_0\leq \delta. 
    \end{equation}
    However, notice that having  $\varphi_i\leq \varphi_0\leq \delta$ for all $i \in \N$ would imply 
    \begin{equation}
        \sum_i\varphi_i \leq \delta^{1/2}\sum_i\varphi_i^{1/2}\lesssim\delta^{1/2},
    \end{equation} which contradicts the hypothesis $\sum_i\varphi_i=1$ for $\delta$ small enough. This concludes the proof. 
\end{proof}

Let us finally recall the notion of subsystems (see \cite[Proposition 2.6]{dephilippis2023energy} and \cite[Proposition 5.7]{CGOS}).
\begin{proposition}[Subsystems]
\label{prop:subsy}
Let $t\in [0,+\infty)$ and $(\mu,m)\in \mathcal{A}$ with $I(\mu,m)<\infty$. Set $v=dm/d\mu$. Then for every measure $\sigma\leq \mu_t $ there exists a measure $\Tilde{\mu}\in\mathcal{A}$ such that
\begin{enumerate}
    \item $\mu'=\mu-\Tilde{\mu}\in \mathcal{M}_{\textup{loc}}^+( \R^2\times(0,+\infty))$;
    \item $\Tilde{\mu}_t=\sigma$;
    \item $(\Tilde{\mu},v\Tilde{\mu})$ satisfy the continuity equation \eqref{e:continuity}.
\end{enumerate}

In particular $(\Tilde{\mu},v\Tilde{\mu})\in \mathcal{A}$ and $I(\Tilde{\mu},v\Tilde{\mu})\leq I(\mu,m)$. Similarly $(\mu',v\mu')\in \mathcal{A}$ and $I(\mu',v\mu')\leq I(\mu,m)$.
If $\mu_t=\sum_i \varphi_i\delta_{X_i}$ and $\sigma=\varphi_i\delta_{X_i}$ for some $i$,
we call $\mu_+^{t,i}=\Tilde{\mu}\restr( \R^2\times(t,+\infty)) $ the forward subsystem emanating from $X_i$ and $\mu_-^{t,i}=\Tilde{\mu}\restr( \R^2\times (0,t))$ the backward subsystem emanating from $X_i$.
\end{proposition}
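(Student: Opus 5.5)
The plan is to prove the statement by a Lagrangian (superposition) argument: lift $(\mu,v)$ to a measure on trajectories, restrict it to those trajectories passing through $\sigma$ at time $t$, and project back.

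\emph{Step 1 (superposition).} Since $I(\mu,m)<\infty$, we have $\int_a^b\int|v|^2\,d\mu_s\,ds<\infty$ for all $0<a<b<\infty$. Moreover $s\mapsto\mu_s$ is $1/2$-H\"older in $W_2$ up to $s=0$, by \eqref{e:cinest}. Hence the superposition principle \cite[Theorem 8.2.1]{AGS}, applied on each $[a,b]$ and glued by a diagonal argument, gives a positive measure $\eta$ of mass $\Phi$ on $\Gamma=C([0,\infty);\R^2)$ with the following properties. It is concentrated on locally absolutely continuous curves with $\dot\gamma(s)=v(\gamma(s),s)$ for a.e.\ $s>0$, and it satisfies $(e_s)_\#\eta=\mu_s$ for every $s\ge0$, where $e_s(\gamma)=\gamma(s)$.

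\emph{Step 2 (restriction).} Since $\sigma\le\mu_t=(e_t)_\#\eta$, we have $\sigma=h\,\mu_t$ with a Borel density $0\le h\le1$. Define
\begin{equation}
\tilde\eta=h(\gamma(t))\,\eta,\qquad \eta'=\eta-\tilde\eta=(1-h(\gamma(t)))\,\eta,
\end{equation}
and set $\tilde\mu_s=(e_s)_\#\tilde\eta$, $\mu'_s=(e_s)_\#\eta'$. Both $\tilde\eta$ and $\eta'$ are positive, so $\tilde\mu,\mu'\ge0$ and $\tilde\mu+\mu'=\mu$, which gives item 1. Item 2 holds because $\tilde\mu_t=(e_t)_\#(h\circ e_t\,\eta)=h\,\mu_t=\sigma$.

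For item 3, take $\phi\in C^\infty_c(\R^2\times(0,\infty))$. Then
\begin{equation}
\int\partial_s\phi\,d\tilde\mu+\int\nabla\phi\cdot v\,d\tilde\mu=\int_\Gamma\int_0^\infty\frac{d}{ds}\phi(\gamma(s),s)\,ds\,d\tilde\eta(\gamma)=0,
\end{equation}
using $\dot\gamma=v(\gamma,s)$ for $\tilde\eta$-a.e.\ $\gamma$. This step needs Fubini, which is justified by $\int\int|v|\,d\tilde\mu\,ds\le\int\int|v|\,d\mu\,ds<\infty$ on compact time intervals. The same computation applies to $\mu'$.

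\emph{Step 3 (admissibility and energy).} For a.e.\ $s$, $\tilde\mu_s\le\mu_s$, so $\tilde\mu_s$ is atomic with atoms among the $X_i(s)$ and has finite second moment; the same holds for $\mu'$. Thus $(\tilde\mu,v\tilde\mu),(\mu',v\mu')\in\mathcal A$. The kinetic term is monotone: $\int|v|^2d\tilde\mu\le\int|v|^2d\mu$. For the perimeter term, write $\tilde\mu_s=\sum_i\psi_i\delta_{X_i}$ with $0\le\psi_i\le\varphi_i$. The function $f(\psi)=\sum_i\psi_i^{1/2}-(\sum_i\psi_i)^{1/2}$ is nondecreasing in each $\psi_i$, since $\partial_{\psi_i}f=\tfrac12\psi_i^{-1/2}-\tfrac12(\sum_j\psi_j)^{-1/2}\ge0$. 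Hence $\dot P(\tilde\mu,s)\le\dot P(\mu,s)$, and likewise for $\mu'$. Integrating in $s$ gives $I(\tilde\mu,v\tilde\mu)\le I(\mu,m)$ and $I(\mu',v\mu')\le I(\mu,m)$.

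The main obstacle is the technical justification of Step 1 on the half-line $[0,\infty)$. The kinetic energy is only locally integrable and the superposition theorem is stated on bounded intervals, so the measures obtained on $[a,b]$ must be shown to be consistent or extractable along a diagonal sequence. The curves must also extend continuously to $s=0$; I expect the uniform $W_2$-H\"older bound to give the required tightness.
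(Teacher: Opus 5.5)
Your proposal is correct and follows the same approach the statement relies on. The paper does not prove the proposition itself but cites \cite[Proposition 5.7]{CGOS} and \cite[Proposition 2.6]{dephilippis2023energy}, and your construction (take a superposition measure from \cite[Theorem 8.2.1]{AGS}, reweight it by $\frac{d\sigma}{d\mu_t}(\gamma(t))$, project back, and use that $\psi\mapsto\sum_i\psi_i^{1/2}-(\sum_i\psi_i)^{1/2}$ is monotone) is the standard argument behind those references.

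The half-line issue you flag is milder than you suggest. Since $\dot P\ge 0$, you have $E(\mu)\le I(\mu,m)<\infty$ globally. Instead of a diagonal extraction, you can apply superposition on $[0,t]$ and then successively on $[t+k,t+k+1]$. At each time $t+k$ you reweight by $\frac{d\tilde\mu_{t+k}}{d\mu_{t+k}}$ and glue the narrowly continuous pieces, so the continuity equation holds across the junctions.
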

 
\section{Existence and properties of minimizers} 
\label{sec:existence}
In this section, we prove Theorem \ref{theo:existence} as well as  several properties of compactly supported minimizers of \eqref{e:minintro}.  In this way, we inherit most of the properties which were developed in \cite{CGOS} for the same problem in a bounded domain.
\subsection{Barycenter shift and constructions}
\label{sec:baricen}
In this section we prove that, for the energy \eqref{e:intenergy}, we may isolate the kinetic contribution of the barycenter. This is reminiscent of arguments from \cite{dephilippis2023energy,minimizers2d}. We later recall a construction from \cite{dephilippis2023energy}.

We start by the following lemma for barycenters.
\begin{lemma}
\label{lem:barder}
     Let  $(\mu,m)\in\mathcal{A}$. Assume that $I(\mu,m)<+\infty$. Let $X(t)=\frac{1}{\Phi}\int_{\R^2} x \,d\mu_t$. Then
     \begin{equation}
     \label{e:barder}
         \dot{X}(t)=\frac{1}{\Phi}m_t(\R^2)
     \end{equation}

     and \begin{equation}
     \label{e:l2bar}
         \int_{0}^\infty \abs{\dot{X}(t)}^2dt<+\infty.
     \end{equation}
\end{lemma}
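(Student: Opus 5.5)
We will test the continuity equation \eqref{e:continuity} against functions of the form $\phi(x,t)=x_j\,\eta(t)$, with $j=1,2$ and $\eta\in C^\infty_c((0,+\infty))$. This choice is not compactly supported in $x$ and has unbounded values, so Remark \ref{rem:goodtest} does not apply directly. We therefore first truncate: set $\phi_R(x,t)=\chi_R(x)\,x_j\,\eta(t)$, where $\chi_R(x)=\chi(x/R)$ is a smooth cutoff equal to $1$ on $B_R$ and $0$ outside $B_{2R}$. Each $\phi_R$ is bounded, has bounded gradient, and has support with compact projection onto $(0,+\infty)$. Plugging it into \eqref{e:continuity} gives
\begin{equation}
\int \eta'(t)\chi_R x_j\,d\mu+\int \eta(t)\bigl(\chi_R e_j+x_j\nabla\chi_R\bigr)\cdot dm=0.
\end{equation}

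Next we pass to the limit $R\to\infty$ using dominated convergence. Let $K=\supp\eta$, a compact subset of $(0,+\infty)$.
\begin{itemize}
\item For the first term, $|x|$ is $\mu$-integrable on $\R^2\times K$. Indeed, $\mu_t\in\mathcal{M}_2^+$, and the second moments are uniformly bounded for $t\in K$, because $t\mapsto\mu_t$ is $1/2$-Hölder in $W_2$ by \eqref{e:cinest}.
\item For the second term, $|x_j\nabla\chi_R|\lesssim 1$ and this quantity is supported where $|x|\ge R$. Moreover $|m|$ is finite on $\R^2\times K$, since Cauchy--Schwarz gives
\begin{equation}
|m|(\R^2\times K)\le (\Phi|K|)^{1/2}E(\mu)^{1/2}<\infty.
\end{equation}
\end{itemize}
In the limit we obtain
\begin{equation}
\int_0^\infty \eta'(t)\,\Phi X_j(t)\,dt+\int_0^\infty\eta(t)\,m_t(\R^2)\cdot e_j\,dt=0 .
\end{equation}
This says that $\Phi X$ has distributional derivative $m_t(\R^2)$ on $(0,+\infty)$, which is \eqref{e:barder}. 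Since $t\mapsto m_t(\R^2)$ is locally integrable, $X$ is absolutely continuous, so the identity also holds pointwise almost everywhere.

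For \eqref{e:l2bar} we apply Jensen (Cauchy--Schwarz) with $v=dm/d\mu$:
\begin{equation}
|\dot X(t)|^2=\frac1{\Phi^2}\Bigl|\int v\,d\mu_t\Bigr|^2\le\frac1\Phi\int|v|^2d\mu_t=\frac{1}{\Phi}\dot E(\mu,t).
\end{equation}
Integrating in $t$ gives
\begin{equation}
\int_0^\infty|\dot X|^2\,dt\le \frac{E(\mu)}{\Phi}\le \frac{I(\mu,m)}{\Phi}<\infty.
\end{equation}
The second inequality uses $P\ge0$, which holds by subadditivity of the square root.

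The main obstacle is the truncation step, that is, justifying the test function $x_j\eta(t)$. This rests on two facts: the uniform second-moment bound on compact time intervals, and the local finiteness of $|m|$. Both follow from $I<\infty$ as indicated above.
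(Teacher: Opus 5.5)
Your proof is correct and follows essentially the same route as the paper. You test the continuity equation with a truncated coordinate times a time cutoff and let $R\to\infty$ by dominated convergence, using the local integrability of first moments (from the $W_2$-continuity of $t\mapsto\mu_t$) and of $|m_t|(\R^2)$ (via Cauchy--Schwarz and finite kinetic energy); the $L^2$ bound then follows from Jensen, and the only difference is cosmetic: your smooth radial cutoff $\chi(x/R)\,x_j$ gives a genuinely $C^\infty_c$ test function, whereas the paper clips $x\cdot e_j$ at $\pm R$, which is only Lipschitz.
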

\begin{proof}
  By scaling we can suppose $\Phi=1$. Since $t\rightarrow \mu_t$ is a a continuous curve in $\mathcal{M}^+_2(\R^2)$, 
      \begin{equation}
        \label{e:l1boundcenter}
\int_{\R^2}\abs{x}\,d\mu_t\leq\left(\int_{\R^2}\abs{x}^2\,d\mu_t\right)^{1/2}\in L^{1}_{\textup{loc}}(0,+\infty).
       \end{equation}
       Consider now $\xi(t)\in C^{\infty}_c(0,+\infty)$. For $j=1,2$ and $R>0$ consider the canonical base $e_j$ of $\R^2$, define \begin{equation*}
           \chi_{R,j}(x)=\begin{cases}
               x\cdot e_j \quad &\text{ for }\abs{x\cdot e_j}\leq R,\\
               R \quad &\text{ for }x\cdot e_j\geq R, \\
               -R\quad &\text{ for }x\cdot e_j\leq  -R.
           \end{cases}
       \end{equation*} and test equation \eqref{e:continuity} with $\phi=\xi(t)\chi_{R,j}(x)$ (recall Remark \ref{rem:goodtest}). 
       We find 
       \begin{equation}
       \label{e:tested}
           \int_{0}^\infty \dot{\xi}(t)\int_{\R^2} \chi_{R,j}(x)\,d\mu_tdt+ \int_{0}^\infty \xi(t) e_j\cdot m_t(\{\abs{x\cdot e_j}\leq R\})dt=0.
       \end{equation}
       Let us examine both terms in \eqref{e:tested} as $R\rightarrow + \infty$. For the term
    \begin{equation}
        \int_{0}^\infty \dot{\xi}(t)\int_{\R^2} \chi_{R,j}(x)\,d\mu_tdt,
    \end{equation} by \eqref{e:l1boundcenter} we can apply dominated convergence and we find 
    \begin{equation}
        \int_{0}^\infty \dot{\xi}(t)\int_{\R^2} \chi_{R,j}(x)\,d\mu_tdt\rightarrow e_j\cdot \int_{0}^\infty \dot{\xi}(t)\int_{\R^2} x\,d\mu_tdt=e_j\cdot \int_{0}^\infty \dot{\xi}(t)X(t)dt.
    \end{equation}
       For the other term notice that, since $I(\mu,m)<+\infty$,
       \begin{equation*}
         \abs{\xi(t) e_j\cdot m_t(\{\abs{x\cdot e_j}\leq R\})}\leq \abs{\xi(t)} \int_{\R^2} \abs{\frac{dm_t}{d\mu_t}}\,d\mu_t\leq \abs{\xi(t)} \left(\int_{\R^2} \abs{\frac{dm_t}{d\mu_t}}^2\,d\mu_t  \right)^{1/2}\in L^{1}(0,+\infty).
       \end{equation*} 
       Hence, since for $R\rightarrow +\infty$ one has $m_t(\{\abs{x\cdot e_j}\leq R\})\rightarrow m_t(\R^2)$ for a.e.\ $ t\in(0,+\infty)$, again by dominated convergence we find
       \begin{equation}
           \int_{0}^\infty \xi(t) e_j\cdot m_t(\{\abs{x\cdot e_j}\leq R\})dt\rightarrow \int_{0}^\infty \xi(t) e_j\cdot m_t(\R^2)dt.
       \end{equation}
       We conclude that 
       \begin{equation}
           e_j\cdot \left(\int_{0}^\infty \dot{\xi}(t)X(t)dt+\int_{0}^\infty \xi(t) m_t(\R^2)dt \right)=0.
       \end{equation}
Since $\xi\in C^{\infty}_c(0,+\infty)$ and $j=1,2$ are arbitrary, we find \eqref{e:barder}. For \eqref{e:l2bar}, using \eqref{e:barder} and Jensen's inequality we have
\begin{equation}
    \int_{0}^{\infty}\abs{\dot{X}(t)}^2dt=\int_{0}^{\infty}\abs{\int_{\R^2}\frac{dm_t}{d\mu_t}\,d\mu_t}^2dt\leq \int_{\R^2\times (0+\infty)}\abs{\frac{dm_t}{d\mu_t}}^2\,d\mu_tdt\leq I(\mu,m)<+\infty.
\end{equation}
\end{proof}
\begin{lemma}
    \label{lem:galileianshift}
    Let  $\mu\in\mathcal{A}^*$. Assume that $I(\mu)<+\infty$ and that $t\rightarrow\int_{\R^2} \abs{x}d\mu_t\in L^{1}_{\textup{loc}}(0,+\infty)$. For $t\geq 0$ let $X(t)=\frac{1}{\Phi}\int_{\R^2} x \,d\mu_t$ and define $\nu_t=(x-X(t))\#\mu_t$.
    Then, 
    \begin{equation}
        \label{e:energysplit}
    I(\mu)= I(\nu)+\Phi\int_0^\infty \abs{\dot{X}(t)}^2\,dt.
    \end{equation} 
\end{lemma}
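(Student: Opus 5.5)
The plan is to build an explicit velocity field for $\nu$ from the optimal one for $\mu$, and then compare kinetic energies in both directions. The perimeter term needs no work: translating by $X(t)$ maps atoms to atoms of the same mass, so $\dot P(\nu,t)=\dot P(\mu,t)$ for every $t$.

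\emph{Step 1: lifting to $\nu$.} Let $m$ be the optimal momentum for $\mu$ and $v=dm/d\mu$. Lemma~\ref{lem:barder} applies since $I(\mu,m)<\infty$, and gives $\dot X=\frac1\Phi m_t(\R^2)=\int v\,d\mu_t/\Phi$ with $\dot X\in L^2$. Set
\[
\tilde m_t=(x-X(t))\#\big((v-\dot X(t))\mu_t\big),
\]
so that $d\tilde m_t/d\nu_t(y)=v(y+X(t),t)-\dot X(t)$.

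\emph{Step 2: the continuity equation for $(\nu,\tilde m)$.} Fix $\phi\in C^\infty_c$ and put $\psi(x,t)=\phi(x-X(t),t)$. Since $X$ is only $H^1_{\rm loc}$, I would first mollify $X$ to $X_\eps$, so that $\psi_\eps$ is $C^1$, bounded, with bounded gradient and compact time support. Remark~\ref{rem:goodtest} then allows $\psi_\eps$ as a test function in \eqref{e:continuity}. From $\partial_t\psi_\eps=\partial_t\phi-\dot X_\eps\cdot\nabla\phi$ we get
\[
\int(\partial_t\phi-\dot X_\eps\cdot\nabla\phi)(x-X_\eps,t)\,d\mu_t\,dt+\int\nabla\phi(x-X_\eps,t)\cdot dm=0.
\]
Now let $\eps\to0$. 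We have $X_\eps\to X$ locally uniformly and $\dot X_\eps\to\dot X$ in $L^2_{\rm loc}$, and the integrands are bounded, so the limit follows by dominated convergence. The limit is exactly the weak continuity equation for $(\nu,\tilde m)$. Hence $(\nu,\tilde m)\in\mathcal A$; the requirement that $\nu_t$ have finite second moment holds because $\mu_t$ does. This approximation is the only delicate point, and it is routine.

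\emph{Step 3: the energy inequality $I(\mu)\ge I(\nu)+\Phi\int|\dot X|^2$.} Expanding the square,
\[
\int|v-\dot X|^2d\mu_t=\int|v|^2d\mu_t-2\dot X\cdot\int v\,d\mu_t+\Phi|\dot X|^2=\int|v|^2d\mu_t-\Phi|\dot X|^2,
\]
using $\int v\,d\mu_t=\Phi\dot X$ from Step~1. Therefore
\[
I(\nu)\le I(\nu,\tilde m)=I(\mu)-\Phi\int|\dot X|^2.
\]

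\emph{Step 4: the reverse inequality.} Take the optimal momentum $n$ for $\nu$ with $w=dn/d\nu$, and define a momentum for $\mu$ by $(x+X)\#((w+\dot X)\nu_t)$. The same argument as in Step~2 shows it solves the continuity equation for $\mu$. Moreover $\int w\,d\nu_t=0$: the barycenter of $\nu_t$ is identically $0$, so Lemma~\ref{lem:barder} applied to $\nu$ gives $n_t(\R^2)=0$. The cross term therefore vanishes again, and
\[
I(\mu)\le I(\nu)+\Phi\int|\dot X|^2.
\]
Combining Steps~3 and~4 gives \eqref{e:energysplit}.
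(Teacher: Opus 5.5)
Your proposal is correct and follows essentially the same route as the paper. You lift the optimal momentum via $\tilde m_t=(x-X(t))\#m_t-\dot X(t)\nu_t$, kill the cross term using $m_t(\R^2)=\Phi\dot X(t)$ from Lemma~\ref{lem:barder}, and run the symmetric construction for the reverse inequality using $n_t(\R^2)=0$. The one addition is that you justify the continuity equation for the shifted pair by mollifying $X$, which the paper only asserts; in that limit $\dot X_\eps$ is not uniformly bounded, so you need its $L^1_{\rm loc}$ convergence rather than plain dominated convergence, but this is harmless.
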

    \begin{proof}
        Let us start by noticing that the perimeter part does not change: $P(\mu)=P(\nu)$. By a global shift we may also assume without loss of generality that $\int_{\R^2} x\,d\mu_0=0$. Let $m\in\mathcal{M}_{\textup{loc}}(\R^2\times(0,+\infty);\R^2)$ be a minimizer for \eqref{e:intenergymu}. In particular $(\mu,m)$ solves the continuity equation \eqref{e:continuity} and $m=m_t\otimes dt$. Since $\nu_t=(x-X(t))\#\mu_t$, the measure
       \begin{equation*}
           n_t=(x-X(t))\#m_t-\dot{X}(t)\nu_t
       \end{equation*} is such that $(\nu,n)$ solves the continuity equation. Moreover for any $x\in\R^2$ and $t\geq 0$
       \begin{equation}
           \label{e:derivative}
           \frac{dm_t}{d\mu_t}(x)=\frac{dn_t}{d\nu_t}(x+X(t))+\dot{X}(t).
       \end{equation}
       By Lemma \ref{lem:barder} we know that $\dot{X}(t)=\frac{1}{\Phi}m_t(\R^2)$, hence 
       \begin{equation}
           n_t(\R^2)=(x-X(t))\#m_t(\R^2)-\frac{1}{\Phi}m_t(\R^2)\nu_t(\R^2)=m_t(\R^2)-m_t(\R^2)=0.
       \end{equation}
       Using \eqref{e:derivative}, we find
       \begin{multline*}
           I(\mu,m)=P(\mu)+ \int_{0}^\infty \int_{\R^2} \abs{\frac{dm_t}{d\mu_t}}^2\,d\mu_tdt \\ =P(\nu)+\int_{0}^\infty \int_{\R^2} \abs{\frac{dn_t}{d\nu_t}}^2\,d\nu_tdt + \Phi\int_0^\infty \abs{\dot{X}(t)}^2\,dt +2\int_{0}^\infty \dot{X}(t)\cdot \int_{\R^2} \frac{dn_t}{d\nu_t}\,d\nu_tdt \\= I(\nu,n) + \Phi \int_0^\infty \abs{\dot{X}(t)}^2\,dt +2\int_{0}^\infty \dot{X}(t)\cdot n_t(\R^2) dt=I(\nu,n) + \Phi \int_0^\infty \abs{\dot{X}(t)}^2\,dt,
       \end{multline*}
    where in the last passage we used $n_t(\R^2)=0$. Notice that $\int_0^\infty \abs{\dot{X}(t)}^2\,dt$ is finite again by Lemma \ref{lem:barder}. We can conclude that 
    \begin{equation}
        I(\mu)\geq I(\nu)+\Phi \int_0^\infty \abs{\dot{X}(t)}^2\,dt.
    \end{equation}
    With a similar reasoning as above, if $n$ is such that  $I(\nu)=I(\nu,n)$, it is possible to find $m$ such that $(\mu,m)$ solves the continuity equation and 
    \begin{equation}
        I(\nu,n)=I(\mu,m)-\Phi \int_0^\infty \abs{\dot{X}(t)}^2dt,
    \end{equation}
    so that 
    \begin{equation}
        I(\nu)=I(\nu,n)=I(\mu,m)-\Phi \int_0^\infty \abs{\dot{X}(t)}^2dt\geq I(\mu)-\Phi \int_0^\infty \abs{\dot{X}(t)}^2dt.
    \end{equation}
    This concludes the proof.
    \end{proof}

Let us now recall the following proposition, which is a minor modification of \cite[Proposition 3.9]{dephilippis2023energy}.     
\begin{proposition}
\label{prop:sing_branch}
Let $T,\Phi,R>0$ ,  $\mu^{\pm}\in \mathcal{M}^+(\R^2)$ be such that  $\mu^\pm(\R^2)=\Phi$ and $\supp \mu^{\pm}\subset [-R/2,R/2]^2$. Then, there exists $\mu \in \mathcal{A}^*(\R^2,(0,T))$ such that for all $t\in (0,T)$ one has $\supp \mu_t\subset  [-R/2,R/2]^2$, $\mu_{0}=\mu^-$, $\mu_{T}=\mu^+$ and 
\begin{equation}
\label{e:locconstruction}
I(\mu, (0,T)) \lesssim \frac{1}{T} W^2(\mu^-, \mu^+) + \left(\Phi + \Phi^\frac{1}{2}\right)\max\left\{T^{\frac{1}{3}},T\right\}.\end{equation}
\end{proposition}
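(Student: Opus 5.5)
We construct $\mu$ by concatenation: first move $\mu^-$ to a single Dirac at the center by a dyadic branching construction, then reverse the same construction to reach $\mu^+$, and glue this to a direct transport when that is cheaper. Concretely, we compare two competitors and take the better one.

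\emph{Competitor 1 (pure transport).} Let $\mu_t$ be the Wasserstein geodesic between $\mu^-$ and $\mu^+$, reparametrized on $(0,T)$. Its kinetic energy is exactly $\frac1T W^2(\mu^-,\mu^+)$ by \eqref{e:ber}, and convexity of $[-R/2,R/2]^2$ keeps the support inside. The perimeter term is not controlled, however, since for diffuse $\mu^\pm$ the sum $\sum_x\mu_t(\{x\})^{1/2}$ may be infinite. So pure transport cannot be used alone; we must first atomize.

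\emph{Competitor 2 (branching).} The standard construction from \cite[Proposition 3.9]{dephilippis2023energy} handles $T\lesssim R^{?}$ scaled appropriately. Split $(0,T)$ into $(0,T/3)$, $(T/3,2T/3)$, $(2T/3,T)$.
\begin{itemize}
\item On $(0,T/3)$, irrigate $\mu^-$ from a finite atomic measure by a dyadic tree. At generation $k$, working backward in time, we have $4^k$ cubes of side $\ell_k=\ell_0 2^{-k}$, each carrying its $\mu^-$ mass at its center. Over a time slab of length $\tau_k$, each atom splits into four, moving distance $\sim\ell_k$ with kinetic cost $\sim \Phi\,\ell_k^2/\tau_k$. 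The perimeter cost on the slab is at most $\tau_k\sum_{Q}\mu^-(Q)^{1/2}\le \tau_k\, 2^{k}\Phi^{1/2}$ by Cauchy–Schwarz over $4^k$ cubes. Choosing $\tau_k\sim \tau_0 2^{-\beta k}$ with $\beta\in(1,2)$ makes both series geometric.
\item On the middle slab, transport the two atomic coarse measures (at the top scale $\ell_0$) to each other, or merge everything into the single point $0$.
\end{itemize}
Optimizing over $\ell_0$ and $\tau_0$ under the constraint $\sum\tau_k\le T/3$ gives costs of order $\Phi\ell_0^2/T+\Phi^{1/2}T(\ell_0/\ell_0)\dots$. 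Balancing $\Phi\,\ell^2/T$ against $\Phi^{1/2}T\,(R/\ell)$ produces the $T^{1/3}$ scaling with the $R$-dependence absorbed as in the source.

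\emph{Coupling the two.} The actual estimate \eqref{e:locconstruction} mixes the terms. Rather than merging to a point, I follow \cite{dephilippis2023energy}: first take a near-optimal plan, discretize $\mu^\pm$ at a scale $\ell$ into atomic measures $\mu^\pm_\ell$ via the dyadic construction above, with cost $\lesssim(\Phi+\Phi^{1/2})T^{1/3}$ when $\ell\sim T^{1/3}$-type scaling holds. Then transport $\mu^-_\ell$ to $\mu^+_\ell$ along the geodesic on the middle slab. By the triangle inequality, $W(\mu^-_\ell,\mu^+_\ell)\le W(\mu^-,\mu^+)+2\sqrt\Phi\,\ell$, which contributes $\frac{1}{T}W^2+\Phi\ell^2/T$. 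The perimeter of the geodesic between atomic measures is bounded by the number of atoms in the support of the plan, at most $\#\text{cubes}^2$. This is the delicate point.

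I expect the main obstacle to be this middle-slab perimeter. The geodesic between two atomic measures with $N$ atoms each can create up to $N^2$ moving atoms. To avoid this, I would instead route everything through a single Dirac only at scale $\ell_0\sim R$ when $T\gtrsim 1$, which yields the linear-in-$T$ term $\Phi^{1/2}T$ and $\Phi\,R^2/T\lesssim$ absorbed. For $T\lesssim1$ I would use the dyadic scheme only down to scale $\ell\sim T^{2/3}$, yielding the $T^{1/3}$ term. The modifications relative to \cite[Proposition 3.9]{dephilippis2023energy} are the support confinement, which holds by convexity of cubes and centers, and the $\Phi$ scaling, which is checked by homogeneity.
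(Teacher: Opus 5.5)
The paper gives no argument beyond citing \cite[Proposition 3.9]{dephilippis2023energy}. Your architecture is the right one: dyadic branching on $(0,T/3)$ and $(2T/3,T)$ reducing $\mu^\pm$ to grid-atomic measures $\mu^\pm_\ell=\sum_Q\mu^\pm(Q)\delta_{x_Q}$ with $N=(R/\ell)^2$ cubes, plus a transport between $\mu^-_\ell$ and $\mu^+_\ell$ in the middle slab. But you leave its one nontrivial step open, and your workaround does not give \eqref{e:locconstruction}.

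\textbf{The middle slab.} For $T\lesssim 1$ you never say how $\mu^-_\ell$ is connected to $\mu^+_\ell$. An $N^2$ bound on the moving atoms is fatal: it gives a middle-slab perimeter $\lesssim \Phi^{1/2}NT=\Phi^{1/2}R^2T/\ell^2$. Balancing this against $\Phi\ell^2/T$ yields only $\Phi^{3/4}R$, which does not vanish as $T\to0$. Routing through a single Dirac instead costs kinetic energy $\gtrsim\Phi R^2/T$ for diffuse data. That is not bounded by the right-hand side of \eqref{e:locconstruction} when $\mu^-=\mu^+$ and $T\ll1$.

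The missing observation is that an optimal plan between two discrete measures can be chosen as a vertex of the transportation polytope. Its support graph is a forest, so it has at most $2N-1$ atoms. The displacement interpolation along it then has perimeter per unit time $\le(2N)^{1/2}\Phi^{1/2}\lesssim\Phi^{1/2}R/\ell$, the same order as the branching part. Its kinetic energy is $\lesssim\frac1T W^2(\mu^-_\ell,\mu^+_\ell)\lesssim\frac1TW^2(\mu^-,\mu^+)+\Phi\ell^2/T$. Coalescing atoms only lower the energy, by subadditivity of $\sqrt{\cdot}$ and convexity of $\abs{m}^2/\mu$. With this, no case distinction or detour through a point is needed.

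\textbf{The $R$-dependence.} It cannot be ``absorbed as in the source'', and homogeneity cannot produce the stated form, which is not scale invariant. With the corrected middle step and $\ell=\min\{R,(\Phi^{-1/2}RT^2)^{1/3}\}$, your bookkeeping gives
\begin{equation*}
I(\mu,(0,T))\lesssim \frac1T W^2(\mu^-,\mu^+)+\Phi^{2/3}R^{2/3}T^{1/3}+\Phi^{1/2}T.
\end{equation*}
This implies \eqref{e:locconstruction} only for $R\lesssim1$, via $\Phi^{2/3}\le\Phi+\Phi^{1/2}$ (which is where that factor comes from); for instance $R=1$ in Corollary \ref{cor:comp}.

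For $R\gg1$ the $R$-dependence is unavoidable. Let $\mu^-=\mu^+$ be uniform on $[-R/2,R/2]^2$ and suppose $I(\mu,(0,T))\le C$. Pick $t\in(0,T)$ with $\mu_t=\sum_i\varphi_i\delta_{X_i}$, $\dot P(\mu,t)\le 2C/T$ and $W^2(\mu^-,\mu_t)\le CT$. Combine the bathtub bound $W^2(\mu^-,\mu_t)\ge\frac{R^2}{2\pi\Phi}\sum_i\varphi_i^2$ with H\"older's inequality $\Phi\le(\sum_i\varphi_i^{1/2})^{2/3}(\sum_i\varphi_i^2)^{1/3}$. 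Together they force $C\gtrsim\min\{\Phi^{2/3}R^{2/3}T^{1/3},\Phi R^2/T\}$, which is unbounded in $R$, while the right-hand side of \eqref{e:locconstruction} stays bounded. So what you can prove is the $R$-dependent bound above, or the stated one under $R\lesssim1$.
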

Using Proposition \ref{prop:sing_branch}, we have the following corollary.
\begin{corollary}
\label{cor:comp}
    There exists a measure $\mu\in \mathcal{A}^*$ such that \begin{equation}
        \mu_0=\chi_{[-1/2,1/2]^2}\,dx.
    \end{equation}
    Moreover, $\mu_t=\delta_0$ for $t\geq 1$ and for every $t>0$, $\supp \mu_t\subset [-1/2,1/2]^2$. 
    Finally,
    \begin{equation}
        \mathcal{E}(\mu)\lesssim 1.
    \end{equation}
\end{corollary}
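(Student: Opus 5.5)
We apply Proposition \ref{prop:sing_branch} with $\mu^-=\chi_{[-1/2,1/2]^2}\,dx$, $\mu^+=\delta_0$, $\Phi=1$, $R=1$ and $T=1$. This gives $\mu\in\mathcal{A}^*(\R^2,(0,1))$ with $\mu_0=\chi_{[-1/2,1/2]^2}\,dx$, $\mu_1=\delta_0$ and $\supp\mu_t\subset[-1/2,1/2]^2$ for every $t\in(0,1)$. We then extend $\mu$ to all times by setting $\mu_t=\delta_0$ for $t\ge 1$, with $m_t=0$ there. The extended pair still satisfies the continuity equation \eqref{e:continuity} on $(0,+\infty)$. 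Indeed, the curve $t\mapsto\mu_t$ is continuous in Wasserstein space at $t=1$, so testing with $\phi\in C^\infty_c(\R^2\times(0,+\infty))$ and splitting the time integral at $t=1$ produces boundary terms at $t=1$ that cancel. For a.e. $t$ the measure $\mu_t$ is a finite sum of Diracs; for $t\ge1$ this is trivial. Also $\mu_t\in\mathcal{M}^+_2$, since all supports are bounded.

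Next we bound the internal energy. For $t\ge1$ we have $\dot P(\mu,t)=1-1=0$ and $\dot E(\mu,t)=0$, so $I(\mu)=I(\mu,(0,1))$. By \eqref{e:locconstruction},
\begin{equation*}
I(\mu,(0,1))\lesssim W^2(\mu^-,\delta_0)+2\max\{1,1\}.
\end{equation*}
Since $W^2(\mu^-,\delta_0)=\int_{[-1/2,1/2]^2}|x|^2\,dx\le 1/2$, we get $I(\mu)\lesssim1$.

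Finally we bound the boundary term. By Proposition \ref{prop:linfest} with $\|\mu_0\|_{L^\infty}=1$ and $\Phi=1$, we have $\|\mu_0\|_{H^{-1/2}}^2\lesssim1$. Therefore $\mathcal{E}(\mu)\lesssim1$.

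The only slightly delicate point is checking the continuity equation across the gluing time $t=1$. It follows from the Wasserstein continuity of $\mu_t$ on $[0,1]$, which holds because $I(\mu,(0,1))<\infty$ and \eqref{e:cinest} applies. Everything else is a direct application of the stated results.
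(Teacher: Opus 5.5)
Your proposal is correct and takes the same approach as the paper, which also applies Proposition \ref{prop:sing_branch} with $\Phi=R=T=1$, $\mu^-=\chi_{[-1/2,1/2]^2}\,dx$ and $\mu^+=\delta_0$. You additionally write out the routine steps the paper leaves implicit: the extension by $\delta_0$ for $t\ge1$, the gluing of the continuity equation at $t=1$, and the bound on the boundary term via Proposition \ref{prop:linfest}.
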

\begin{proof}
    It is sufficient to apply Proposition \ref{prop:sing_branch} with $\Phi=R=T=1$, $\mu_-=\chi_{[-1/2,1/2]^2}\,dx$ and $\mu^+=\delta_0$. 
\end{proof}

\subsection{Existence in bounded domains}
\label{sec:bounded}
We now introduce some auxiliary problems on bounded domains, for which most of the theory was developed in \cite{CGOS}.

We fix throughout the section $R,T>0$. Define the set
\begin{equation}
    \mathcal{A}^*(R,T)=\{\mu\in \mathcal{A}^*(\R^2,(0,T)), \, \supp \mu_0\subset [-R/2,R/2]^2\}.
\end{equation}
Let $\overline{\mu}_\pm\in \mathcal{M}^+(\R^2)$ with $\supp{\overline{\mu}_\pm}\subset [-R/2,R/2]^2$ and $\mu_-(\R^2)=\mu_+(\R^2)$. Consider the minimization problems
\begin{equation}
    \label{e:RTboundary}
     \inf \left\{I(\mu,(0,T)) \colon \mu \in \mathcal{A}^*(R,T), \mu_0=\overline{\mu}_-, \mu_T= \overline{\mu}_+\right\}
\end{equation}
and 
\begin{equation}
\label{e:RTprob}
    e(R,T)=  \inf \left\{\mathcal{E}(\mu) \colon \mu \in \mathcal{A}^*(R,T), \mu_T=\delta_0 \right\},
\end{equation}
where we have set (with abuse of notation) $\mathcal{E}(\mu)=I(\mu,(0,T))+\norm{\mu_0}^2_{H^{-1/2}(\R^2)}$.
Problem \eqref{e:RTboundary} has been extensively studied in \cite{CGOS}, while
problem \eqref{e:RTprob} is a one sided version of the minimization problem studied in \cite{CGOS,branched,dephilippis2023energy}. Most of the theory developed in those papers can be extended with minor modifications. First, arguing as in \cite[Proposition 5.5]{CGOS}, minimizers for both problems exist. Clearly, a minimizer $\mu$ of \eqref{e:RTprob} is a minimizer of $\eqref{e:RTboundary}$ with $\overline{\mu}_-=\mu_0$ and $\overline{\mu}_+=\delta_0$. Notice that the condition $\mu_T=\delta_0$ in \eqref{e:RTprob} fixes the mass $\mu_0(\R^2)=1$. 
Moreover, by a truncation argument, minimizers of \eqref{e:RTboundary} (and thus also minimizers of \eqref{e:RTprob}) are such that $\supp\mu_t\subset [-R/2,R/2]^2$ for all $t\in (0,T)$.
    Finally, minimizers of \eqref{e:RTboundary} enjoy the no-loop property (see \cite[Lemma 5.8]{CGOS}). 
\begin{lemma}[No-loop condition]
Let $\mu$ be a minimizer of \eqref{e:RTboundary} and $t\in(0,T)$. Let $X_1,X_2\in \R^2$ with $\mu_t(\{X_i\})\neq 0$ for $i=1,2$ and consider the forward and backwards subsystems $\mu_{\pm}^{t,i}$ emanating from $X_i$. If there exist $0\leq t_-<t<t_+<T$ such that for $*\in\{\pm\}$, $\mu_*^{t_*,1}$ and $\mu_*^{t_*,2}$ are not mutually singular, then  $X_1=X_2$.
\end{lemma}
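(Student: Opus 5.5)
The approach is to argue by contradiction and reroute a small amount of mass between the two branches: for fixed trajectories the kinetic energy is linear in the transported masses, while the perimeter term $\sum \varphi^{1/2}$ is strictly concave. Suppose $X_1\neq X_2$ and that the forward subsystems $\mu_+^{t_+,1},\mu_+^{t_+,2}$ and the backward ones $\mu_-^{t_-,1},\mu_-^{t_-,2}$ are not mutually singular. (I read these as the subsystems of $\mu_\pm^{t,i}$ evaluated at the times $t_\pm$.) Since $\mu_t$ is atomic, I will use a Lagrangian (superposition) description of the flow of a minimizer with finite energy. This is the kind of description underlying Proposition \ref{prop:subsy} and \cite[Prop.~5.7, Lemma 5.8]{CGOS}: the mass is carried by absolutely continuous curves $\gamma$ with $\int|\dot\gamma|^2<\infty$, and the velocity $v=dm/d\mu$ is the velocity of the curves passing through a given point.

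\textbf{Step 1 (selecting two loops).} Non-singularity at $t_-$ gives a point $Y_-$ with $\mu_{t_-}(\{Y_-\})>0$ charged by both backward subsystems. Similarly, non-singularity at $t_+$ gives a point $Y_+$ charged by both forward subsystems. I then choose two families of trajectories. The first, of mass $a>0$, goes $Y_-\to X_1\to Y_+$ on $[t_-,t_+]$ and passes through $X_1$ at time $t$. The second, of mass $b>0$, goes $Y_-\to X_2\to Y_+$ and passes through $X_2$ at time $t$. Concatenating the backward and forward pieces, and using that the subsystems are obtained by restricting the same velocity field $v$, these are genuine subflows of $\mu$ on $[t_-,t_+]$. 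Denote the two resulting curves in the atomic picture by $\gamma_1,\gamma_2$.

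\textbf{Step 2 (the competitor).} For $|\eps|<\min(a,b)$ define $\mu^\eps$ by adding $\eps$ units of mass along $\gamma_1$ and removing $\eps$ units along $\gamma_2$ on $(t_-,t_+)$, leaving $\mu$ unchanged outside this interval. Both curves start at $Y_-$ and end at $Y_+$, so the continuity equation is preserved across $t_\pm$. Also $\mu^\eps_0=\mu_0$, $\mu^\eps_T=\mu_T$, and the support constraint is untouched. Hence $\mu^\eps$ is admissible in \eqref{e:RTboundary} with $I(\mu^\eps,(0,T))\ge I(\mu,(0,T))$, with equality at $\eps=0$. Using the same momenta $v\mu^\eps$, the kinetic term is affine in $\eps$: it changes by $\eps\int_{t_-}^{t_+}(|\dot\gamma_1|^2-|\dot\gamma_2|^2)\,ds$. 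Along each time slice, the atom containing $\gamma_1(s)$ gains mass $\eps$ and the atom containing $\gamma_2(s)$ loses mass $\eps$. When both curves sit in the same atom nothing changes. So $\eps\mapsto \dot P(\mu^\eps,s)$ is concave for each $s$, and hence $f(\eps):=I(\mu^\eps,(0,T))$ is concave on an open interval around $0$. A concave function with an interior minimum at $0$ must be constant near $0$.

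\textbf{Step 3 (strict concavity, the main obstacle).} To reach a contradiction I need strict concavity on a set of times of positive measure. Since $\gamma_1,\gamma_2$ are continuous (indeed $1/2$-Hölder) and $\gamma_1(t)=X_1\neq X_2=\gamma_2(t)$, there is an interval $J\ni t$ on which $\gamma_1(s)\neq\gamma_2(s)$. For a.e. $s\in J$ the two points are therefore distinct atoms with masses $\varphi_1(s),\varphi_2(s)\ge$ the carried masses $a,b>0$. On $J$ the perimeter contribution $(\varphi_1+\eps)^{1/2}+(\varphi_2-\eps)^{1/2}$ has second derivative bounded above by $-c<0$ uniformly, using that the total mass is at most $\Phi$. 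So $f''\le -c|J|<0$, contradicting that $f$ is constant near $0$. Hence $X_1=X_2$.

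The delicate point is the rigorous justification of the Lagrangian selection in Step 1. One must check that the curves through $X_i$ at time $t$ really reach $Y_\pm$ with positive mass, and that perturbing masses along them keeps $m^\eps\ll\mu^\eps$ with the same velocities. I would handle this via the superposition principle for the continuity equation \cite{AGS} applied to the subsystems from Proposition \ref{prop:subsy}, exactly as in \cite[Lemma 5.8]{CGOS}.
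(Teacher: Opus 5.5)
Your strategy is the right one, and it is the type of argument behind \cite[Lemma 5.8]{CGOS}, which the paper only quotes. You perturb $\mu$ on $(t_-,t_+)$ by $\eps$ times the difference of two normalized sub-flows with the same traces at $t_\pm$. Freezing the velocity $v$ makes the kinetic term affine and the perimeter concave in $\eps$, and strict concavity near time $t$ contradicts minimality. (Take $f(\eps):=I(\mu^\eps,v\mu^\eps,(0,T))$ rather than $I(\mu^\eps,(0,T))$. Only the frozen-momentum energy is concave, and $f(\eps)\ge I(\mu^\eps,(0,T))\ge I(\mu,(0,T))=f(0)$ once $m=v\mu$ is chosen optimal.)

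Two steps, however, fail as written. In Step 1 you claim that non-singularity at $t_-$ yields a common atom $Y_-$. The statement allows $t_-=0$, where $\mu_0=\overline{\mu}_-$ may be diffuse, and a priori $\mu_s$ is atomic only for a.e.\ $s$. Two non-singular measures need not share an atom. Replace $Y_-$ by the common part $\sigma_-:=(\mu_-^{t,1})_{t_-}\wedge(\mu_-^{t,2})_{t_-}\neq 0$. Then apply Proposition~\ref{prop:subsy} to $\mu_-^{t,i}$ at time $t_-$ with datum $\sigma_-$. This gives sub-flows with velocity $v$, trace $\sigma_-$ at $t_-$ and, by mass comparison, trace $\sigma_-(\R^2)\delta_{X_i}$ at $t$. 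Do the same at $t_+$ with a common part $\sigma_+$. Rescale both halves to the common mass $a=\min\{\sigma_-(\R^2),\sigma_+(\R^2)\}$ and glue them at time $t$; this is legitimate since both halves equal $a\delta_{X_i}$ there. You obtain $\hat\mu^1,\hat\mu^2\le\mu$ with $\hat\mu^1_{t_\pm}=\hat\mu^2_{t_\pm}$ and $\hat\mu^i_t=a\delta_{X_i}$.

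In Steps 2--3 you collapse each family into a single curve $\gamma_i$. You then reason with ``the atom containing $\gamma_i(s)$'' and with $\gamma_1(s)\neq\gamma_2(s)$ on $J$. This is unjustified. Without the no-loop property (the very statement being proved) and without finiteness of branching, a sub-flow from one atom to one atom can split and re-merge, and individual superposition curves may carry zero mass. The argument has to be run at the level of measures. Set $\mu^\eps=\mu+\eps(\hat\mu^1-\hat\mu^2)/a$. Each atom $x$ of $\mu_s$ then changes by $\eps c_x(s)$ with $c_x(s)=(\hat\mu^1_s-\hat\mu^2_s)(\{x\})/a$. So $\eps\mapsto\dot P(\mu^\eps,s)$ is concave, and strictly concave whenever $\hat\mu^1_s\neq\hat\mu^2_s$ (for a.e.\ $s$, where $\mu_s$ is atomic). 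The map $s\mapsto\hat\mu^i_s$ is narrowly continuous with $\hat\mu^i_t=a\delta_{X_i}$, and $X_1\neq X_2$. Hence the two traces differ for all $s$ in an interval $J\ni t$, and $f(\eps)+f(-\eps)-2f(0)<0$ for small $\eps\neq 0$. This contradicts $f\ge f(0)$. No uniform bound on second derivatives is needed. With these two repairs your proof is complete.
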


\begin{definition}
    We say that a measure $\mu\in \mathcal{A}^*(R,T)$ is locally polygonal if 
    \begin{equation}
        \mu=\sum_i\frac{\varphi_i}{\sqrt{1+\abs{\dot{X}_i}^2}}\mathcal{H}^1\LL\Gamma_i,
    \end{equation}
    where the sum is countable, $\Gamma_i=\{(X_i(t),t):t\in [a_i,b_i]\}$ are segments disjoint up to the endpoints and for any $0<s<t<T$ only finitely many segments intersect $[-R/2,R/2]^2\times (s,t)$. For a locally polygonal measure $\mu \in \mathcal{A}^*(R,T)$ with $\mu_0(\R^2)=\Phi$ the following representation formula holds: \begin{equation}
        \label{e:energyrepr}
I(\mu,(0,T)) = \int_{0}^T\sum_{i}\left[\varphi_i^{1/2}(t)+ \varphi_i(t) \vert \dot{X}_i(t)\vert^2 \right]\, dt-\Phi^{1/2}T
\end{equation}
where the sum is locally finite. Moreover, between two branching points $t\rightarrow\varphi_i(t)$ is constant while $t\rightarrow X_i(t)$ is affine.  
\end{definition}
 By \cite[Proposition 5.10 and Proposition 5.11]{CGOS}, we have:
\begin{proposition}
    Let $\mu$ be a minimizer of \eqref{e:RTboundary}. Then, $\mu$ is locally polygonal.
\end{proposition}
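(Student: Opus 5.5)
The statement to prove is that minimizers of \eqref{e:RTboundary} are locally polygonal. I would follow the strategy of \cite[Propositions 5.10 and 5.11]{CGOS} in three steps.

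\textbf{Step 1: finitely many atoms on compact time intervals.} Fix $0<s<t<T$. Take a minimizer $\mu$ of \eqref{e:RTboundary}. The boundary data are fixed and $I(\mu,(0,T))<\infty$, so $\dot P(\mu,\tau)<\infty$ for a.e.\ $\tau$, and $\mu_\tau$ is atomic. The first task is a uniform bound on the number of atoms of $\mu_\tau$ for $\tau\in(s,t)$. I would prove it by comparison, arguing by contradiction. If $\mu_\tau$ had very many atoms on a set of times of positive measure, then on a short window $(\tau-h,\tau+h)$ I would apply Proposition \ref{prop:subsy} to isolate the subsystems carried by the small atoms. I would then replace them with a competitor that merges them into fewer Diracs. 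This competitor comes from Proposition \ref{prop:sing_branch} (with $R$ fixed, since supports stay in $[-R/2,R/2]^2$ by the truncation argument). The perimeter gain grows with the number of atoms, because $\sum\varphi_i^{1/2}$ is large when there are many small $\varphi_i$. The construction costs only $\lesssim h^{-1}W^2+(\Phi+\Phi^{1/2})h^{1/3}$. Choosing $h$ appropriately gives a contradiction with minimality. Using Proposition \ref{prop:smallper}-type estimates together with subadditivity, the number of atoms is then locally bounded in time.

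\textbf{Step 2: structure of the trajectories.} Once atoms are locally finite, the no-loop lemma lets me organize the trajectories. The number of atoms changes only at isolated times, namely the branching points. Between two consecutive branching times the masses $\varphi_i$ are constant: mass can only split or merge at atoms, and $\mu_t$ is $1/2$-Hölder in $W_2$.

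\textbf{Step 3: straight segments via first variation.} On each interval between branching points, the perimeter is fixed. The kinetic part is then $\sum_i\varphi_i\int|\dot X_i|^2$ with fixed endpoints. By minimality, each $X_i$ minimizes the Dirichlet energy, so it is affine. This yields segments $\Gamma_i$ that are disjoint up to endpoints (using no-loop again), and the representation \eqref{e:energyrepr} follows.

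\textbf{Main obstacle.} Step 1 is the delicate part. Branching points could accumulate in time even when the number of atoms is bounded. I would rule this out with the same local competitor argument applied near an accumulation time, where the cost of replacing a cascade of branchings by one straight branching is controlled by \eqref{e:locconstruction}.
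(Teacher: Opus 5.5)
The paper gives no argument for this statement; it quotes \cite[Propositions 5.10 and 5.11]{CGOS}. Your three-step outline (finitely many atoms, then finitely many branch points, then straight segments by first variation) is a reasonable architecture. Step~3 is fine: straightening a trajectory lowers the kinetic energy, and by subadditivity it cannot increase $P$ even if trajectories collide.

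The genuine gap is in Step~1. The claim that the perimeter gain grows with the number of atoms is false. Merging atoms of masses $\varphi_i$ with $\sum_i\varphi_i=\Phi_s$ saves at most $\sum_i\varphi_i^{1/2}-\Phi_s^{1/2}$ per unit time. This depends on how the masses are distributed, not on how many there are. For $\varphi_i=\eps^{2^i}$, $i\ge k$, there are infinitely many atoms, yet the excess is $\approx\Phi_s\ll\Phi_s^{1/2}$. So on a window of length $h\le 1$ the saving $\lesssim h\Phi_s$ is far below the term $(\Phi_s+\Phi_s^{1/2})h^{1/3}$ in \eqref{e:locconstruction}, and no contradiction arises. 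In fact $\dot P(\mu,\tau)<\infty$ is compatible with infinitely many atoms, so no bound on the total perimeter excess can bound $\sharp\supp\mu_\tau$.

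Two further problems in Step~1:
\begin{itemize}
\item You measure the gain at the single time $\tau$, but the surgery acts on the whole window.
\item $P$ is only subadditive, so $P(\mu)<P(\mu-\tilde\mu)+P(\tilde\mu)$ whenever trajectories of $\tilde\mu$ share atoms with the rest inside the window. Removing $\tilde\mu$ then does not save $P(\tilde\mu)$.
\end{itemize}
What is actually needed is a uniform lower bound $\mu_\tau(\{x\})\ge c>0$ on every atom for $\tau\in(s,t)$, with $c$ depending on $s,t$ and the data. Local polygonality implies this anyway, and it gives $\sharp\supp\mu_\tau\le\Phi/c$. The mechanism behind it differs from yours. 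A light atom of mass $\varphi$ pays $\varphi^{1/2}$ per unit time on its own, but only about $\varphi/(2\sqrt{M})$ if it travels with an atom of mass $M\gg\varphi$; this is compared with a detour cost $\lesssim\varphi R^2/h$. So the competitor must attach the light mass to the remaining system rather than rebuild a subsystem in isolation with Proposition~\ref{prop:sing_branch}. The no-loop property is what makes the perimeter bookkeeping of such a surgery exact; compare its role in \eqref{e:perimetersublinearity}.

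Step~2 has two further problems. First, constant masses between branching times do not follow from $1/2$-H\"older continuity in $W_2$. The curve $t\mapsto(1-t)\delta_a+t\delta_b$ is exactly $1/2$-H\"older in $W_2$. What excludes it is finiteness of the kinetic energy: by the superposition principle mass moves along absolutely continuous curves, which would create additional atoms.

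Second, and more seriously, you only assert that branching times are isolated, and your fix for the main obstacle cannot work. On a window of length $h$, a configuration with at most $N$ atoms spends perimeter excess $\lesssim N^{1/2}\Phi^{1/2}h$. By contrast, \eqref{e:locconstruction} only yields a competitor of cost up to $C\bigl(h^{-1}W^2+(\Phi+\Phi^{1/2})h^{1/3}\bigr)$:
\begin{itemize}
\item its kinetic term is merely comparable, up to a constant, to the original kinetic energy;
\item its $h^{1/3}$ term is much larger than $h$.
\end{itemize}
So minimality is never violated on short windows; that construction is tailored to diffuse boundary data. Once atoms in $(s,t)$ have mass $\ge c$, you must exclude accumulation of branch points structurally, through the no-loop property. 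There are then at most $\Phi/c$ strands, and infinitely many splittings and mergings among boundedly many strands in a compact time interval should force two of them to separate and later rejoin.
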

In the rest of the paper we will always use the representation formula \eqref{e:energyrepr} for minimizers of \eqref{e:RTboundary}.
For locally polygonal measures, we prove the following first moment bound.
\begin{lemma}
\label{lem:l1bound}
    Let $\mu\in \mathcal{A}^{*}(R,T)$ be locally polygonal and such that for $t\in [0,T]$,
    \begin{equation}
    \label{e:zerobaricent}
        \int_{\R^2}x\,d\mu_t=0
    \end{equation}  with $\mu_T=\Phi\delta_0$ for some $\Phi>0$. Then for $t\in[0,T]$,
    \begin{equation}
    \label{e:L1bound}
    \Phi^{-{1/4}}\int_{\R^2}\abs{x}\,d\mu_t\lesssim I(\mu,(0,T)).
    \end{equation}
\end{lemma}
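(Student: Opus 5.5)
The idea is to integrate the first moment $f(t)=\int_{\R^2}\abs{x}\,d\mu_t$ backwards from time $T$, where $f(T)=0$ because $\mu_T=\Phi\delta_0$. The rate of change $\int\abs{v}\,d\mu_s$ will be controlled pointwise in time by $\Phi^{1/4}(\dot P+\dot E)$. The zero-barycenter condition \eqref{e:zerobaricent} is what makes this possible: the heaviest atom, which carries no perimeter cost, cannot move on its own without the lighter atoms also moving.

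\emph{Step 1 (regularity of $f$).} Since $\mu$ is locally polygonal, on any $(s,t')\Subset(0,T)$ there are finitely many segments. Between branching times $f(t)=\sum_i\varphi_i\abs{X_i(t)}$ with $\varphi_i$ constant and $X_i$ affine. At a branching time atoms merge or split at a common point, so $f$ is continuous there. Hence $f$ is locally Lipschitz on $(0,T)$, with
\[
\abs{\dot f(t)}\le \sum_i\varphi_i\abs{\dot X_i(t)}
\]
for a.e.\ $t$. Moreover $f$ is continuous up to $0$ and $T$, by the $1/2$-H\"older continuity of $t\mapsto\mu_t$ in $W_2$ given by \eqref{e:cinest}. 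Since $f(T)=0$, we get $f(t)\le\int_t^T\sum_i\varphi_i\abs{\dot X_i}\,ds$.

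\emph{Step 2 (pointwise estimate).} Fix a.e.\ $s$, order the atoms so that $\varphi_0\ge\varphi_i$ for all $i$, and set $S=\sum_{i\ge1}\varphi_i^{1/2}$.
\begin{itemize}
\item Differentiating \eqref{e:zerobaricent} gives $\sum_i\varphi_i\dot X_i=0$. Hence $\varphi_0\abs{\dot X_0}\le\sum_{i\ge1}\varphi_i\abs{\dot X_i}$, and therefore $\sum_i\varphi_i\abs{\dot X_i}\le2\sum_{i\ge1}\varphi_i\abs{\dot X_i}$.
\item Young's inequality together with $\varphi_i\le\Phi$ gives
\[
\varphi_i\abs{\dot X_i}\le \tfrac12\Phi^{1/4}\varphi_i\abs{\dot X_i}^2+\tfrac12\Phi^{-1/4}\varphi_i\le\tfrac12\Phi^{1/4}\bigl(\varphi_i\abs{\dot X_i}^2+\varphi_i^{1/2}\bigr).
\]
\end{itemize}
Combining these, $\sum_i\varphi_i\abs{\dot X_i}\le\Phi^{1/4}(\dot E(\mu,s)+S)$.

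\emph{Step 3 (key step: $S\lesssim\dot P$).} This is the main point, and I would argue as follows. For $i\ge1$ we have $\varphi_i\le\min(\varphi_0,\Phi-\varphi_0)\le\Phi/2$. Therefore
\[
\Phi-\varphi_0=\sum_{i\ge1}\varphi_i\le(\Phi/2)^{1/2}S,
\]
and so
\[
\Phi^{1/2}-\varphi_0^{1/2}=\frac{\Phi-\varphi_0}{\Phi^{1/2}+\varphi_0^{1/2}}\le\frac{\Phi-\varphi_0}{\Phi^{1/2}}\le \frac{S}{\sqrt2}.
\]
Consequently
\[
\dot P(\mu,s)=\varphi_0^{1/2}+S-\Phi^{1/2}\ge(1-2^{-1/2})S.
\]
This is a quantitative version of Proposition \ref{prop:smallper}.

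\emph{Conclusion.} Putting Steps 1–3 together,
\[
f(t)\lesssim\Phi^{1/4}\int_t^T\bigl(\dot P+\dot E\bigr)\,ds\le\Phi^{1/4}I(\mu,(0,T)),
\]
which is \eqref{e:L1bound}. Step 1 needs some care near the endpoints $t=0$ and $t=T$, where segments may accumulate; continuity of $f$ there handles this by approximating from the interior.
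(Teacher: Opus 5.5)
Your proof is correct and follows the same route as the paper. Both bound the rate of change of the first moment on branching-free intervals, use the zero-barycenter condition so that the lighter atoms pay for the motion of the heaviest one, apply Young's inequality, and integrate backward from $\mu_T=\Phi\delta_0$; the paper handles $t=0$ by lower semicontinuity rather than your $W_2$-continuity, which works equally well here. The only real difference is your Step 3: the paper proves the weaker bound $\dot P\gtrsim\Phi^{-1/2}\sum_{i\neq0}\varphi_i$ through a case distinction based on the qualitative Proposition \ref{prop:smallper} and a Taylor expansion, whereas your direct estimate $\dot P\ge(1-2^{-1/2})\sum_{i\ge1}\varphi_i^{1/2}$ is explicit, avoids the case split, and is slightly stronger.
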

\begin{proof}
Fix $t\in(0,T)$ and write $\mu_t=\sum_i \varphi_i(t)\delta _{X_i(t)}$. Let us assume without loss of generality $\varphi_0(t)\geq \varphi_i(t)$ for all $i\in \N$. We start by arguing that
\begin{equation}
\label{e:lowerboundperimeter}
    \sum_{i}\varphi^{1/2}_i(t)-\Phi^{1/2}\gtrsim\Phi^{-{1/2}}\sum_{i\neq 0}\varphi_i(t).
\end{equation}
If $\sum_{i}\varphi^{1/2}_i(t)-\Phi^{1/2}\gtrsim \Phi^{1/2}$ the validity of \eqref{e:lowerboundperimeter} is immediate, so we can assume $\sum_{i}\varphi^{1/2}_i(t)-\Phi^{1/2}\ll \Phi^{1/2}$.
In this case by Proposition \ref{prop:smallper} we have that $\Phi-\varphi_0(t)\ll \Phi$, hence (since $\sqrt{x}-1>\frac{3}{4}(x-1)$ for $1/2<x<1$)
\begin{equation}
\label{e:taylorphi0}
    \varphi_0^{1/2}(t)-\Phi^{1/2}\geq \frac{3\Phi^{-{1/2}}}{4}(\varphi_0(t)-\Phi)=-\frac{3\Phi^{-{1/2}}}{4}\sum_{i\neq 0}\varphi_i(t).
\end{equation}
Moreover, since $\varphi_i(t)\leq \Phi$, one has
\begin{equation}
    \sum_{i\neq 0}\left(\frac{\varphi_i(t)}{\Phi}\right)^{1/2}\geq \sum_{i\neq 0}\frac{\varphi_i(t)}{\Phi}, 
\end{equation}
which is equivalent to
\begin{equation}
\label{e:decreasingsquareroot}
    \sum_{i\neq 0}\varphi_i^{1/2}(t)\geq \Phi^{-{1/2}}\sum_{i\neq 0}\varphi_i(t). 
\end{equation}
Putting \eqref{e:taylorphi0} and \eqref{e:decreasingsquareroot} together, we find \eqref{e:lowerboundperimeter}. Using H\"older's inequality and \eqref{e:lowerboundperimeter}, 
\begin{multline}
\label{e:holder}
    \Phi^{-{1/4}}\sum_{i\neq 0}\varphi_i(t)\abs{\dot{X}_i(t)}\leq \Phi^{-{1/2}}\sum_{i\neq 0}\varphi_i(t) +\sum_{i\neq 0}\varphi_i(t)\abs{\dot{X}_i(t)}^2 \\\lesssim \lt(\sum_{i}\varphi_i^{1/2}(t)-\Phi^{1/2}\rt) +\sum_{i}\varphi_i(t)\abs{\dot{X}_i(t)}^2=\dot{P}(\mu,t)+\dot{E}(\mu,t).
\end{multline}
Now, since $\mu$ is locally polygonal, we may find a countable union of open intervals $U_j$ on which no branching occurs and such that $\abs{[0,T]/\bigcup_jU_j}=0$. In particular, on each $U_j$, $\mu_t=\sum_i \varphi_i\delta _{X_i(t)}$, with weights $\varphi_i$ which are constant in $t$. If $U_j=(a_j,b_j)$, integrating \eqref{e:holder} on $U_j$ we find
\begin{multline}
\label{e:traceclass}
    \Phi^{-{1/4}}\sum_{i\neq 0}\varphi_i\abs{X_i(b_j)-X_i(a_j)}\leq\Phi^{-{1/4}}\sum_{i\neq 0}\varphi_i\int_{a_j}^{b_j}\abs{\dot{X}(t)}\,dt \\ \overset{\eqref{e:holder}}{\lesssim} E(\mu,U_j)+P(\mu,U_j)=I(\mu,U_j).
\end{multline}
Let us now notice that \eqref{e:zerobaricent} implies 
\begin{equation}
    \sum_i \varphi_i X_i(a_j)=\sum_i \varphi_iX_i(b_j)
\end{equation}
which can be rewritten as 
\begin{equation}
\label{e:zerobarycenterphi0}
    \varphi_0(X_{0}(b_j)- X_0(a_j))=-\sum_{i\neq 0} \varphi_i(X_{i}(b_j)- X_i(a_j)).
\end{equation}
Hence, putting \eqref{e:traceclass} and \eqref{e:zerobarycenterphi0} together we find
\begin{multline}
    \Phi^{-{1/4}}\abs{\int_{\R^2}\abs{x}\,d\mu_{b_j}-\int_{\R^2}\abs{x}\,\,d\mu_{a_j}}\leq \Phi^{-{1/4}}\left(\sum_{i\neq 0} \varphi_i\abs{X_{i}(b_j)- X_1(a_j)} + \varphi_0\abs{(X_{0}(b_j)- X_0(a_j)} \right)\\
    \overset{\eqref{e:zerobarycenterphi0}}{\lesssim}\Phi^{-{1/4}}\sum_{i\neq 0} \varphi_i\abs{X_{i}(b_j)- X_i(a_j)}\overset{\eqref{e:traceclass}}{\lesssim}  I(\mu,U_j).
\end{multline}
Now, for any $0<t<T$, we obtain by triangle inequality
\begin{multline}
    \Phi^{-{1/4}}\abs{\int_{\R^2}\abs{x}\,d\mu_{t}-\int_{\R^2}\abs{x}\,\,d\mu_{T}}\leq\Phi^{-{1/4}}\sum_j\abs{\int_{\R^2}\abs{x}\,d\mu_{b_j}-\int_{\R^2}\abs{x}\,\,d\mu_{a_j}}\\ \lesssim \sum_jI(\mu,U_j)=I(\mu,(0,T)).
\end{multline}
Since we know that $\mu_T=\Phi\delta_0$, this implies the claim.
Finally, the result holds also for $t=0$ by lower semicontinuity of the integral with respect to weak$^*$ convergence of measures, and for $t=T$ trivially. 
\end{proof}
\subsection{Lagrangian reformulation and a priori bounds}
\label{sec:lagrangian}
In this section we prove a priori bounds for minimizers of \eqref{e:RTprob} which are independent of $R,T$. These bounds will be crucial for establishing existence of a minimizer for \eqref{e:minintro} in the next section.

Let $R,T>0$. We introduce, as in \cite{dephilippis2023energy,branched}, a Lagrangian reformulation of the minimization problem \eqref{e:RTboundary} when $\overline{\mu}^+=\delta_0$. The proof of the following proposition is essentially the same as \cite[Theorem 4.3]{dephilippis2023energy}, hence we omit it. We just remark that it crucially uses the no-loop condition. Let
\begin{equation*}
    \mathcal{C}=\{X: \R^2\times [0,T]\rightarrow \R^2\,: X(x,0)=x, \ X(x,T)=0 \textup{ and for  a.e.}\ x \ t\rightarrow X(x,t) \textup{ is AC}\}
\end{equation*}
where AC denotes absolutely continuous curves. Let $X\in \mathcal{C}$, $\overline{\mu}\in \mathcal{P}(\R^2)$ with $\supp{\overline{\mu}}\subset [-R/2,R/2]^2$, $y\in \R^2$ and $t\in [0,T]$. We define the multiplicity function as
\begin{equation}
\label{e:mult}
    \varphi_{X,\overline{\mu}}(y,t)=\overline{\mu}(\{x\in \supp \overline{\mu}:\ X(x,t)=y\}).
\end{equation}
We set
\begin{equation}
\label{e:Len}
    \mathcal{L}(X,\overline{\mu})  =\int_{\R^2\times(0,T)} \left( \varphi_{X,\overline{\mu}}(X(x,t),t)^{-{1/2}}-1+\abs{\partial_t X(x,t)}^2\right)\,dtd\overline{\mu}(x).
\end{equation}
\begin{proposition}
\label{prop:lagrange}
    Let $R,T>0$. For every $\overline{\mu}\in \mathcal{P}(\R^2)$ with $\supp{\overline{\mu}}\subset [-R/2,R/2]^2$ one has
    \begin{equation*}
        \min_{\mu\in \mathcal{A}^*(R,T)}\{I(\mu,(0,T)),\mu_{0}=\overline{\mu}, \mu_T=\delta_0\}=\min_{X\in \mathcal{C}}\mathcal{L}(X,\overline{\mu}).
    \end{equation*}
    Moreover, for every $X$ minimizing $\mathcal{L}$, $\mu_t=X(\cdot,t)\#\overline{\mu}$ is a minimizer for $I$. Vice versa, we can associate to any minimizer $\mu\in  \mathcal{A}^*$ a collection of curves $X\in \mathcal{C}$ such that $X$ is a minimizer for $\mathcal{L}$.
\end{proposition}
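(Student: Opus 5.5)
The plan is to prove the two inequalities between the Eulerian minimum and the Lagrangian minimum separately, and to obtain the correspondence between minimizers as a by-product. Throughout, $\Phi=\overline{\mu}(\R^2)=1$.

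\emph{Step 1 (Lagrangian to Eulerian, $\min I\le \inf\mathcal{L}$).} Take $X\in\mathcal{C}$ with $\mathcal{L}(X,\overline{\mu})<\infty$ and set $\mu_t=X(\cdot,t)\#\overline{\mu}$.

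First I show that $\mu_t$ is purely atomic for a.e.\ $t$. On the non-atomic part of $\mu_t$ the multiplicity $\varphi_{X,\overline{\mu}}(X(x,t),t)$ vanishes, so the integrand $\varphi^{-1/2}$ equals $+\infty$. Finiteness of $\mathcal{L}$ therefore forces the non-atomic part to be empty for a.e.\ $t$.

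Next I identify the perimeter term. If $\mu_t=\sum_y\varphi_y\delta_y$, then
\[
\int \varphi_{X,\overline{\mu}}(X(x,t),t)^{-1/2}\,d\overline{\mu}=\sum_y\varphi_y\,\varphi_y^{-1/2}=\sum_y\varphi_y^{1/2},
\]
so the perimeter part of $\mathcal{L}$ equals $P(\mu,(0,T))$ exactly.

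For the kinetic term I set $m_t=X(\cdot,t)\#(\partial_tX\,\overline{\mu})$. The continuity equation \eqref{e:continuity} follows by testing with $\phi(X(x,t),t)$ and using the chain rule along the AC curves $t\mapsto X(x,t)$. By disintegrating $\overline{\mu}$ along the fibres of $X(\cdot,t)$ and applying Jensen's inequality,
\[
\Big|\frac{dm_t}{d\mu_t}\Big|^2(y)\le \text{(fibre average of } |\partial_tX|^2),
\]
hence $E(\mu,(0,T))\le\int|\partial_tX|^2\,dt\,d\overline{\mu}$.

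Finally I check admissibility. We have $\mu_T=\delta_0$ because $X(x,T)=0$. The support condition $\supp\mu_t\subset[-R/2,R/2]^2$ can be enforced by projecting onto the square, since the projection is $1$-Lipschitz and does not increase either term. This gives $I(\mu,(0,T))\le\mathcal{L}(X,\overline{\mu})$. It also shows that if $X$ minimizes $\mathcal{L}$, then once the reverse inequality is known the push-forward $\mu_t=X(\cdot,t)\#\overline{\mu}$ is a minimizer of $I$.

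\emph{Step 2 (Eulerian to Lagrangian, $\inf\mathcal{L}\le \min I$).} Let $\mu$ be a minimizer of \eqref{e:RTboundary} with $\overline{\mu}_+=\delta_0$. I use that $\mu$ is locally polygonal and satisfies the no-loop condition.

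The first claim is that an atom of $\mu_s$ never splits at a later time. Suppose an atom at time $s$ sent mass to two distinct atoms $X_1,X_2$ at some time $t>s$. Their backward subsystems would meet at $s$. Their forward subsystems would meet at some $t_+<T$ close to $T$, where (by $\mu_T=\delta_0$ and the $1/2$-Hölder continuity in $W_2$) almost all the mass sits in one atom. The no-loop lemma would then force $X_1=X_2$. Consequently, for $0<s<t<T$ there is a well-defined forward map $S_{s,t}$ from the atoms of $\mu_s$ to those of $\mu_t$, and these maps compose consistently.

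To reach time $0$, I use Proposition~\ref{prop:subsy}. For fixed $s>0$, the backward subsystems emanating from the atoms $X_i(s)$ decompose $\overline{\mu}=\mu_0$ into measures $\overline{\mu}^{s}_i$, which are mutually singular by no-loop. Choosing Borel sets $A^s_i$ that carry the $\overline{\mu}^s_i$, I get a partition of $\supp\overline{\mu}$ up to null sets, and these partitions are nested as $s\downarrow0$. I then define $X(x,t)=S_{s,t}(X_i(s))$ for $x\in A^s_i$ and $t\ge s$; this is independent of $s$ by nestedness. At $t=0$ I set $X(x,0)=x$.

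On $(s,T)$ each curve $X(x,\cdot)$ is piecewise affine and follows the segments of $\mu$. Hence $\partial_tX(x,t)=\dot X_i(t)$ coincides with the Eulerian velocity, and $\varphi_{X,\overline{\mu}}(X(x,t),t)=\varphi_i(t)$. Integrating over $(s,T)$ gives equality of the two functionals there, and letting $s\to0$ yields $\mathcal{L}(X,\overline{\mu})=I(\mu,(0,T))$. This proves the reverse inequality and the ``vice versa'' statement.

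\emph{Main obstacle.} The hardest step is the behaviour at $t=0$, where $\overline{\mu}$ may be diffuse and the branching is not locally finite. Two things need proof there. First, the partitions $\{A^s_i\}$ must be chosen measurably and consistently; I would handle this through the mutual singularity of the backward subsystems together with a countable choice over $s\in\mathbb{Q}$. Second, for $\overline{\mu}$-a.e.\ $x$ the curve $X(x,\cdot)$ must be absolutely continuous up to $0$ with $X(x,t)\to x$. For this I would bound $\int_0^s\int|\partial_tX|^2\,d\overline{\mu}\,dt$ by $E(\mu,(0,s))$, which gives $\int|X(x,t)-X(x,0^+)|^2\,d\overline{\mu}\le t\,E(\mu,(0,t))\to0$. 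The remaining task is to identify the limit $X(x,0^+)$ with $x$. The plan is to use that $X(\cdot,t)\#\overline{\mu}=\mu_t\to\overline{\mu}$ and that the sets $A^s_i$ shrink, so that the limit map pushes $\overline{\mu}$ to itself. Note that this is weaker than what is needed: it does not immediately give $X(x,0^+)=x$ pointwise, and the shrinking of the sets $A^s_i$ will have to be used more carefully to close this step.
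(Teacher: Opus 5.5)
Your Step 1 is correct. The projection is not even needed, since $\mathcal{A}^*(R,T)$ only constrains $\supp\mu_0=\supp\overline{\mu}$. The architecture of Step 2, a deterministic Lagrangian flow built from the no-loop property of a minimizer, is the expected one. The paper omits its proof, refers to \cite[Theorem 4.3]{dephilippis2023energy}, and stresses only the no-loop condition.

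\textbf{The gap at $t=0$.} The step you flag is a genuine gap. If $X(x,0^+)\neq x$ on a set of positive $\overline{\mu}$-measure, your $X$ is not in $\mathcal{C}$, because the jump at $t=0$ is invisible to $\mathcal{L}$. So $\inf_{\mathcal{C}}\mathcal{L}\le I(\mu,(0,T))$ is not proved. Your sketched route cannot close it: a limit map pushing $\overline{\mu}$ to itself need not be the identity, and the $A^s_i$ need not shrink in diameter. The missing observation is that all of $A^s_i$ goes to the single point $X_i(s)$, and the only coupling with a Dirac mass is the product one. Apply \eqref{e:cinest} on $(0,s)$ to the backward subsystems $\tilde\mu^{s,i}$; their traces at $0$ are $\overline{\mu}^s_i$ and they move with the velocity $v$ of $\mu$. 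This gives
\[
\int_{\R^2}\abs{X(x,s)-x}^2\,d\overline{\mu}(x)=\sum_i W_2^2\big(\overline{\mu}^s_i,\varphi_i(s)\delta_{X_i(s)}\big)\le s\sum_i\int_0^s\int_{\R^2}\abs{v}^2\,d\tilde\mu^{s,i}_t\,dt\le s\,E(\mu,(0,s))\longrightarrow 0.
\]
With your bound on $\int\abs{X(x,s)-X(x,0^+)}^2\,d\overline{\mu}$, this yields $X(x,0^+)=x$ for $\overline{\mu}$-a.e.\ $x$.

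Two further requirements need care:
\begin{itemize}
\item The last inequality needs $\sum_i\tilde\mu^{s,i}\le\mu$ on $\R^2\times(0,s)$.
\item Nestedness needs the subsystems at different $s$ to be compatible.
\end{itemize}
Mutual singularity plus a countable choice does not give either. Build the subsystems coherently, e.g.\ by taking sub-subsystems along $s_k\downarrow 0$. More simply, use one superposition measure $\eta$ on curves representing $(\mu,v)$, as in \cite[Theorem 8.2.1]{AGS}. Its curves start at $x$, so $t=0$ is automatic, and no-loop is exactly what makes the disintegration $\eta_x$ a Dirac mass for $\overline{\mu}$-a.e.\ $x$.

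\textbf{The no-splitting step.} Your justification of no-splitting, and of the mutual singularity of the $\overline{\mu}^s_i$, fails as stated. $W_2(\mu_t,\delta_0)\to 0$ with finite energy does not give a time $t_+<T$ at which one atom carries almost all the mass. For instance, two atoms of mass $1/2$ moving symmetrically into $0$ have finite energy and $\dot P\equiv\sqrt{2}-1$. This has to come from minimality.

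The cut-and-paste argument behind the no-loop lemma works directly with the terminal Dirac as common descendant. Let $\rho_1,\rho_2\le\mu$ be sub-flows of equal mass $m$, moving with $v$, with the following properties:
\begin{itemize}
\item They have the same trace at the splitting time: a multiple of the common atom at time $s$, or a common part of the traces at time $0$ of the two backward subsystems.
\item They pass through $X_1$, resp.\ $X_2$, at time $t$.
\item Both end at $m\delta_0$ at time $T$.
\end{itemize}
For $\abs{\lambda}\le 1$, the measure $\mu+\lambda(\rho_1-\rho_2)$ is admissible with the same boundary data. Its kinetic energy with velocity $v$ is affine in $\lambda$. Its perimeter is concave in $\lambda$, and strictly so near time $t$, where $X_1,X_2$ carry masses $\varphi_1+\lambda m$ and $\varphi_2-\lambda m$. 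Hence one of $\lambda=\pm\lambda_0$ strictly lowers the energy, contradicting minimality of $\mu$.
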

\begin{definition}

 Let $R,T>0$. For every $\overline{\mu}\in \mathcal{P}(\R^2)$ with $\supp{\overline{\mu}}\subset [-R/2,R/2]^2$ we consider a minimizer $\mu$ of
    \begin{equation*}
        \min_{\mu\in \mathcal{A}^*(R,T)}\{I(\mu,(0,T)),\mu_{0}=\overline{\mu}, \mu_T=\delta_0\}.
        \end{equation*}
        Let $X\in \mathcal{C}$ be a set of curves associated to $\mu$ provided by
Proposition \ref{prop:lagrange}. Define the landscape function associated to $\mu$ as  $z:\supp{\overline{\mu}}\rightarrow [0,+\infty]$:
\begin{equation}
    \label{e:landscape}
    z(x)=\int_{0}^T \left(\frac{1}{2} \varphi_{X,\overline{\mu}}(X(x,t),t)^{-{1/2}}+\abs{\partial_t X(x,t)}^2\right)\,dt.
\end{equation}
\end{definition}

This is an analogue of the landscape function from classical branched optimal transport, see \cite{landscape}, and it has been used as a key tool to prove properties of irrigated measures \cite{brancsol,PegPet23,pegonfractal2018}. In this paper we limit ourself to exploiting the fact that $z$ can be interpreted as the first variation of \eqref{e:Len}.
\begin{proposition}
    \label{prop:landfirst}
    Let $R,T>0$. Let $\mu\in \mathcal{A}^*(R,T)$ be a minimizer for  $e(R,T)$ and consider the landscape function $z$ associated to $\mu$ and the potential $u$ of $\mu_0$ as defined in \eqref{e:potential}. Then,
    for $\mu_0$-a.e.\ $x\in \R^2$,
    \begin{equation}
    \label{e:first}
        z(x)+2u(x)=K,
    \end{equation}
    where 
    \begin{equation}
        K=\frac{1}{2}P(\mu,(0,T))+E(\mu,(0,T))+\frac{T}{2}+2\norm{\mu_0}_{H^{-1/2}(\R^2)}^2.
    \end{equation}
    \end{proposition}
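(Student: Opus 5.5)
The plan is to compute the first variation of the full energy with respect to reweighting the initial measure, keeping the Lagrangian curves fixed. Let $\overline{\mu}=\mu_0$ and let $X\in\mathcal{C}$ be the curves associated to $\mu$ by Proposition \ref{prop:lagrange}. Since $\mu$ minimizes $e(R,T)$, in particular $I(\mu,(0,T))=\mathcal{L}(X,\overline{\mu})$ is minimal for the boundary datum $\overline{\mu}$. The key property to exploit is that the perimeter part of $\mathcal{L}$ is a concave function of the multiplicities. As a result, a reweighting of $\overline{\mu}$ along the \emph{same} curves costs at most the linearized amount, with no error term from the perimeter part.

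\textbf{Step 1: integrability.} Since $z\ge0$ and $u\ge0$, I first check that they are integrable against $\overline{\mu}$. For fixed $t$ one has $\int \varphi_{X,\overline{\mu}}(X(x,t),t)^{-1/2}\,d\overline{\mu}(x)=\sum_y \varphi(y,t)^{1/2}$. Hence
\[
\int z\,d\overline{\mu}=\tfrac12\bigl(P(\mu,(0,T))+T\bigr)+E(\mu,(0,T))<\infty
\qquad\text{and}\qquad
\int u\,d\overline{\mu}=\norm{\mu_0}^2_{H^{-1/2}(\R^2)}<\infty .
\]
So $z+2u\in L^1(\overline{\mu})$, and $\int(z+2u)\,d\overline{\mu}$ is exactly the constant $K$ of the statement. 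It therefore suffices to show that $z+2u$ is $\overline{\mu}$-a.e. constant.

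\textbf{Step 2: competitor and upper bound.} Fix a bounded Borel $f$ with $\int f\,d\overline{\mu}=0$, and for $|\eps|\,\|f\|_\infty<1$ set $\overline{\mu}_\eps=(1+\eps f)\overline{\mu}$. This is a probability measure supported in $[-R/2,R/2]^2$. The curve $\mu^\eps_t=X(\cdot,t)\#\overline{\mu}_\eps$ is admissible for $e(R,T)$, with $\mu^\eps_T=\delta_0$. By Proposition \ref{prop:lagrange}, $I(\mu^\eps,(0,T))\le\mathcal{L}(X,\overline{\mu}_\eps)$. In $\mathcal{L}(X,\overline{\mu}_\eps)$ the three terms behave as follows.
\begin{itemize}
\item The multiplicities become $\varphi+\eps\,\delta\varphi$, where $\delta\varphi(y,t)=\int_{\{X(\cdot,t)=y\}}f\,d\overline{\mu}$. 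By concavity of the square root,
\[
\sum_y(\varphi+\eps\delta\varphi)^{1/2}\le\sum_y\varphi^{1/2}+\frac{\eps}{2}\int\varphi(X(x,t),t)^{-1/2}f(x)\,d\overline{\mu}(x).
\]
\item The term $-1$ contributes $-\eps\int f\,d\overline{\mu}=0$.
\item The kinetic term is linear in the measure: it changes by exactly $\eps\int|\partial_tX|^2f\,d\overline{\mu}\,dt$.
\end{itemize}
Integrating in $t$ (Fubini is justified by Step 1, since $|f|$ is bounded) gives
\[
\mathcal{L}(X,\overline{\mu}_\eps)\le \mathcal{L}(X,\overline{\mu})+\eps\int z f\,d\overline{\mu}.
\]
The boundary term is quadratic:
\[
\norm{\overline{\mu}_\eps}^2_{H^{-1/2}(\R^2)}=\norm{\overline{\mu}}^2_{H^{-1/2}(\R^2)}+2\eps\int uf\,d\overline{\mu}+\eps^2\norm{f\overline{\mu}}^2_{H^{-1/2}(\R^2)},
\]
and the last quantity is at most $\|f\|_\infty^2\norm{\overline{\mu}}^2_{H^{-1/2}(\R^2)}$ because the kernel $1/|x-y|$ is positive.

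\textbf{Step 3: conclusion.} Minimality of $\mu$ then gives
\[
0\le \eps\int(z+2u)f\,d\overline{\mu}+C\eps^2
\qquad\text{for all small }\eps\text{ of both signs}.
\]
Dividing by $\eps$ and letting $\eps\to0^\pm$ yields $\int(z+2u)f\,d\overline{\mu}=0$ for every bounded $f$ with $\overline{\mu}$-mean zero. Taking $f=\chi_A-\overline{\mu}(A)$ for Borel sets $A$ shows $\int_A(z+2u)\,d\overline{\mu}=\overline{\mu}(A)\,K$ for every $A$. Hence $z+2u=K$ for $\overline{\mu}$-a.e.\ $x$, with $K$ as computed in Step 1.

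The main point needing care is Step 2: avoiding any differentiability claim for the perimeter sum near $t=0$, where there may be infinitely many atoms. The concavity inequality handles this, because it only requires the one-sided bound, and the two-sided conclusion comes from varying the sign of $\eps$. One should also double-check that the curves $X$ from Proposition \ref{prop:lagrange} can be kept fixed for the reweighted measure $\overline{\mu}_\eps$. This holds since $\overline{\mu}_\eps\ll\overline{\mu}$, so the curves remain defined $\overline{\mu}_\eps$-a.e.
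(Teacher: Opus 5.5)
Your proposal is correct and follows essentially the same route as the paper. You reweight $\mu_0$ to $(1+\eps f)\mu_0$ along the fixed Lagrangian curves $X$, bound the perimeter part by concavity of the square root (so no differentiability is needed), expand the quadratic boundary term whose first variation is $2\int u f\,d\mu_0$, let $\eps\to 0^\pm$, and identify $K$ by integrating against $\mu_0$. The differences are minor: you test with bounded Borel $f$ instead of continuous $\psi$, and you make the integrability of $z,u$ and the $O(\eps^2)$ remainder explicit. Also, the comparison step is most cleanly phrased, as the paper does, as $e(R,T)\le\min_{X'\in\mathcal{C}}\mathcal{L}(X',\overline{\mu}_\eps)+\norm{\overline{\mu}_\eps}^2_{H^{-1/2}(\R^2)}\le\mathcal{L}(X,\overline{\mu}_\eps)+\norm{\overline{\mu}_\eps}^2_{H^{-1/2}(\R^2)}$; this is exactly what Proposition \ref{prop:lagrange} provides, rather than a bound on $I(\mu^\eps,(0,T))$ itself.
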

    \begin{proof}
 We divide the proof in three steps. 
 
         \textbf{Step 1.} We first construct a competitor and exploit minimality.  Let $\psi \in C(\R^2)$ with $\int_{\R^2}\psi\,d\mu_0=0$ and $\supp \psi \subset [-R/2,R/2]^2$. Consider for $\eps\in \R$
\begin{equation}
    \tilde{\mu}_0=(1+\eps \psi )\mu_0. 
\end{equation}
For $\abs{\eps}$ small enough, $\tilde{\mu}_0\in \mathcal{M}^{+}(\R^2)$. Notice that $\supp{\tilde{\mu}_0}\subset \supp \mu_0$. Using Proposition \ref{prop:lagrange} we have
\begin{equation}
    \mathcal{L}(X,\tilde{\mu}_0)\geq    \min_{X'\in \mathcal{C}}\mathcal{L}(X',\tilde{\mu}_0)=\min_{\mu'\in \mathcal{A}^*(R,T)}\{I(\mu',(0,T)),\mu'_{0}=\tilde{\mu}_0, \mu'_T=\delta_0\},
\end{equation}
which implies by minimality of $\mu$
\begin{equation}
\label{e:minimal}
    \mathcal{L}(X,\tilde{\mu}_0)+\norm{\tilde{\mu}_0}_{H^{-1/2}(\R^2)}^2\geq \mathcal{E}(\mu)=\mathcal{L}(X,\mu_0)+\norm{\mu_0}_{H^{-1/2}(\R^2)}^2.
\end{equation}

\textbf{Step 2.}  
We now show that 
\begin{equation}
\label{e:landvar}
    \mathcal{L}(X,\tilde{\mu}_0)\leq \mathcal{L}(X,\mu_0)+\eps \int_{\R^2}z(x)\psi(x)d\mu_0(x).
\end{equation}Recall that  $\mu_t=X(\cdot,t)\#\mu_0$. We know that for a.e.\ $t\in (0,T)$ there holds $\mu_t=\sum_i\varphi_i(t)\delta_{X_i(t)}$. We call 
\begin{equation}
    A_i(t)=\{x\in \R^2: X(x,t)=X_i(t)\}.
\end{equation}
By definition of $\mu_t$
\begin{equation}
\label{e:equiv}
    \varphi_i(t)=\mu_0(A_i(t))=\varphi_{X,\mu_0}(X_i(t),t).
\end{equation}
We have using \eqref{e:equiv}
\begin{multline}
\label{e:var1}
\int_{\R^2\times(0,T)} \varphi_{X,\mu_0}(X(x,t),t)^{-{1/2}}\,dtd\mu_0(x)=\int_0^T\int_{\R^2}\varphi_{X,\mu_0}(x,t)^{-{1/2}}d\mu_t(x)dt\\ =\int_0^T\sum_i\mu_0(A_i(t))^{{1/2}}dt.    
\end{multline}
If now we set $\tilde{\mu}_t=X(\cdot,t)\#\tilde{\mu}_0$, the same argument as above gives 
\begin{equation}
\label{e:var2}
    \int_{\R^2\times(0,T)} \varphi_{X,\tilde{\mu}_0}(X(x,t),t)^{-{1/2}}\,dtd\tilde{\mu}_0(x)=\int_0^T\sum_i\tilde{\mu}_0(A_i(t))^{{1/2}}dt.
\end{equation}
Using the inequality $(a+b)^{1/2}\leq a^{1/2}+\frac{1}{2}a^{-1/2}b$ for all $a,b>0$ 
\begin{multline}
\label{e:taylor}
    \tilde{\mu}_0(A_i(t))^{{1/2}}-\mu_0(A_i(t))^{1/2}=\left(\mu_0(A_i(t))+\eps\psi\mu_0(A_i(t))\right)^{1/2}-\mu_0(A_i(t))^{1/2}\\
    \leq \frac{\eps}{2}\mu_0(A_i(t))^{-1/2}\int_{A_i(t)}\psi(x)d\mu_0=
    \eps\int_{A_i(t)}\frac{1}{2}\varphi_{X,\mu_0}(X_i(t),t)^{-1/2}\psi(x)d\mu_0(x)\\=
    \eps\int_{A_i(t)}\frac{1}{2}\varphi_{X,\mu_0}(X(x,t),t)^{-1/2}\psi(x)d\mu_0(x),
\end{multline}
    where in the last equality we used the definition of $A_i(t)$.
Hence combining \eqref{e:var1}, \eqref{e:var2} and \eqref{e:taylor} 
\begin{multline}
\label{e:varper}
    \int_{\R^2\times(0,T)} \varphi_{X,\tilde{\mu}_0}(X(x,t),t)^{-{1/2}}\,dtd\mu_0(x)=\int_0^T\sum_i\tilde{\mu}_0(A_i(t))^{{1/2}}dt\\\leq \int_0^T\sum_i\mu_0(A_i(t))^{{1/2}}dt +\int_0^T\sum_i\eps\int_{A_i(t)}\frac{1}{2}\varphi_{X,\mu_0}(X(x,t),t)^{-1/2}\psi(x)d\mu_0(x)\\
    =\int_{\R^2\times(0,T)} \varphi_{X,\mu_0}(X(x,t),t)^{-{1/2}}\,dtd\mu_0(x)+\eps\int_{\R^2\times(0,T)} \frac{1}{2}\varphi_{X,\mu_0}(X(x,t),t)^{-{1/2}}\psi(x)\,dtd\tilde{\mu}_0(x).
\end{multline}
Notice moreover that 
    \begin{multline}
    \label{e:varkin}
        \int_{\R^2\times(0,T)} -1+\abs{\partial_t X(x,t)}^2\,dtd\tilde{\mu}_0(x)\\=\int_{\R^2\times(0,T)} -1+\abs{\partial_t X(x,t)}^2\,dtd\mu_0(x)+ \eps\int_{\R^2\times(0,T)} \psi(x)\abs{\partial_t X(x,t)}^2\,dtd\mu_0(x).
    \end{multline}
Summing \eqref{e:varper} and \eqref{e:varkin} (recall \eqref{e:landscape}), we finally find \eqref{e:landvar}.
    
\textbf{Step 3.} We can now send $\eps$ to zero in \eqref{e:minimal} and conclude. Recalling \eqref{e:sobolevpotential}, 
we have 
\begin{equation}
\label{e:normvar}
    \norm{\tilde{\mu}_0}_{H^{-1/2}(\R^2)}^2=\norm{\mu_0}_{H^{-1/2}(\R^2)}^2+\eps\int_{\R^2}2u(x)\psi(x)\,d\mu_0(x)+o(\eps).
\end{equation}
Using \eqref{e:landvar} and \eqref{e:normvar} in \eqref{e:minimal}, dividing by $\eps$ and sending $\eps\rightarrow 0$ we obtain (recall $\eps\in \R$)
\begin{equation}
    \int_{\R^2}\left(z+2u\right)\psi\,d\mu_0=0.
\end{equation}
Since $\psi$ was arbitrary with $\int_{\R^2}\psi\,d\mu_0=0$, this last equality implies that there exists a constant $K\in \R$ such that $z(x)+2u(x)=K$ for $\mu_0$-a.e.\ $x\in \R^2$. Finally, integrating on $\R^2$ we find $K=\frac{1}{2}P(\mu,(0,T))+E(\mu,(0,T))+\frac{T}{2}+2\norm{\mu_0}_{H^{-1/2}(\R^2)}^2$.
\end{proof}
We now deduce boundedness of the landscape function when $R$ and $T$ are far from $0$.
\begin{lemma}
    \label{lem:bddland}
    Let $R,T>1$. Let $\mu\in \mathcal{A}^*$ be a minimizer for  $e(R,T)$ and consider the landscape function $z$ associated to $\mu$. Then for $\mu_0$-a.e.\ $x\in \R^2$,
    \begin{equation*}
         z(x)-\frac{T}{2}\lesssim 1.
    \end{equation*}
\end{lemma}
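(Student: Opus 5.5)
The plan is to combine the first variation identity of Proposition \ref{prop:landfirst} with a uniform bound on the energy $K$ and a positivity bound on the potential $u$. By Proposition \ref{prop:landfirst}, for $\mu_0$-a.e.\ $x$,
\begin{equation*}
z(x)-\frac{T}{2}=K-\frac{T}{2}-2u(x)=\frac{1}{2}P(\mu,(0,T))+E(\mu,(0,T))+2\norm{\mu_0}_{H^{-1/2}(\R^2)}^2-2u(x).
\end{equation*}
Since $u\ge 0$ by \eqref{e:potential}, it suffices to show that
\begin{equation*}
\frac{1}{2}P(\mu,(0,T))+E(\mu,(0,T))+2\norm{\mu_0}_{H^{-1/2}(\R^2)}^2\lesssim \mathcal{E}(\mu)=e(R,T)
\end{equation*}
is bounded by a universal constant whenever $R,T>1$. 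Here we use that $P\ge 0$ (by subadditivity of the square root, $\dot P(\mu,t)\ge 0$) and $E\ge 0$. Hence the left-hand side is at most $2\mathcal{E}(\mu)$.

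To bound $e(R,T)$ uniformly, I would use the competitor from Corollary \ref{cor:comp}. It satisfies $\mu_0=\chi_{[-1/2,1/2]^2}\,dx$, has $\supp\mu_t\subset[-1/2,1/2]^2\subset[-R/2,R/2]^2$ since $R>1$, and $\mu_t=\delta_0$ for $t\ge1$, so in particular $\mu_T=\delta_0$ because $T>1$. Its restriction to $(0,T)$ is therefore admissible in \eqref{e:RTprob}. On $(1,T)$ the integrand of $I$ vanishes, since there $\dot P=1-1=0$ and the velocity is zero. Consequently $\mathcal{E}$ restricted to $(0,T)$ coincides with the full energy, which is $\lesssim 1$ by Corollary \ref{cor:comp}. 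Minimality then gives $e(R,T)\lesssim 1$.

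Putting these together yields $z(x)-T/2\le 2e(R,T)\lesssim 1$ for $\mu_0$-a.e.\ $x$. The only delicate points are to check that the constant $K$ was computed with the normalization in which the $-1$ in $\mathcal{L}$ produces the $T/2$ term, which is precisely what the subtraction of $T/2$ accounts for, and to confirm the nonnegativity of $P$. I do not expect any real obstacle beyond this bookkeeping.
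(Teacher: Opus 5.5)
Your proposal is correct and follows essentially the same route as the paper. Both combine the identity of Proposition \ref{prop:landfirst} with $u\ge 0$, bound $\frac12 P+E+2\norm{\mu_0}^2_{H^{-1/2}(\R^2)}\le 2\mathcal{E}(\mu)=2e(R,T)$, and control $e(R,T)$ by the competitor of Corollary \ref{cor:comp}; you additionally verify explicitly that this competitor is admissible for $R,T>1$, which the paper leaves implicit.
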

\begin{proof}
    It is sufficient to combine Proposition \ref{prop:landfirst} and Corollary \ref{cor:comp}. For $\mu_0$-a.e.\ $x\in \R^2$
    \begin{equation}
        z(x)-\frac{T}{2}\leq z(x)-\frac{T}{2}+2u(x)=\frac{1}{2}P(\mu,(0,T))+E(\mu,(0,T))+2\norm{\mu_0}_{H^{-1/2}(\R^2)}^2\leq 2 \mathcal{E}(\mu)\lesssim 1.
    \end{equation}
\end{proof}
Lemma \ref{lem:bddland} implies the following important a priori bounds.
\begin{lemma}
\label{lem:a priori}
    Let $T,R>1$ and $\mu\in \mathcal{A}^*$ be a minimizer for  $e(R,T)$. Let  $0<\tilde{T}<T$ be such that $\mu_{\tilde{T}}\neq \delta_0$. Then
    \begin{equation}
    \label{e:bbdT}
        \tilde{T}\lesssim 1.
    \end{equation}
    Moreover let $\overline{X}\in \R^2$ and $0<\overline{T}\leq T$ be such that $\mu_{\overline{T}}(\{\overline{X}\})>0$ and consider the backwards subsystem $\mu^{\overline{X},\overline{T}}$. Then for $\mu_0^{\overline{X},\overline{T}}$-a.e.\ $x\in \R^2$,
    \begin{equation}
    \label{e:bddR}
        \abs{x-\overline{X}}\lesssim \overline{T}^{1/2}.
    \end{equation}
    In particular, \eqref{e:bbdT} and \eqref{e:bddR} imply 
    \begin{equation}
    \label{e:a priorir}
        \supp{\mu_0}\subset B(0,\tilde{R}) 
    \end{equation}
    for some $\tilde{R}\lesssim 1$. 
\end{lemma}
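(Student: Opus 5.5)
The plan is to read everything off the pointwise bound of Lemma \ref{lem:bddland}: for $\mu_0$-a.e.\ $x$,
\begin{equation}
\int_0^T \Big(\tfrac12\varphi_{X,\mu_0}(X(x,t),t)^{-1/2}+\abs{\partial_t X(x,t)}^2\Big)dt-\frac T2\lesssim 1.
\end{equation}
Since $\varphi\le 1$, the integrand minus $\tfrac12$ is nonnegative. So both $\int_0^T \tfrac12(\varphi^{-1/2}-1)\,dt\lesssim 1$ and $\int_0^T\abs{\partial_tX}^2dt\lesssim 1$ hold along $\mu_0$-a.e.\ trajectory.

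\textbf{Bound on $\tilde T$.} Fix $\tilde T$ with $\mu_{\tilde T}\neq\delta_0$. By the no-loop structure (the subsystems are nested backward in time: once two points are merged they stay merged), the multiplicity $t\mapsto\varphi_{X,\mu_0}(X(x,t),t)$ is nondecreasing. Since $\mu_{\tilde T}$ has at least two atoms, some atom $\varphi_i(\tilde T)\le 1/2$; take $x$ in the corresponding set $A_i(\tilde T)$, which has positive $\mu_0$-measure. For $t\le\tilde T$ we then have $\varphi(X(x,t),t)\le 1/2$, hence
\begin{equation}
\tilde T\,(2^{1/2}-1)\le\int_0^{\tilde T}(\varphi^{-1/2}-1)\,dt\lesssim 1.
\end{equation}
If one prefers to avoid the monotonicity claim, I would argue directly from the Lagrangian minimality that trajectories never split forward in time, which is the content of the no-loop lemma.

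\textbf{Bound on the subsystem.} For $\mu_0^{\overline X,\overline T}$-a.e.\ $x$ we have $X(x,\overline T)=\overline X$. Cauchy--Schwarz then gives
\begin{equation}
\abs{x-\overline X}\le\int_0^{\overline T}\abs{\partial_t X}\,dt\le \overline T^{1/2}\Big(\int_0^T\abs{\partial_tX}^2dt\Big)^{1/2}\lesssim \overline T^{1/2}.
\end{equation}
For \eqref{e:a priorir}, let $T^*$ be the infimum of times at which $\mu_t=\delta_0$; by the first bound, $T^*\lesssim 1$. Take $\overline T$ slightly above $T^*$ (or $T^*$ itself, using continuity) and $\overline X$ the single atom there. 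The whole of $\mu_0$ is then the backward subsystem, so $\supp\mu_0\subset B(\overline X,C)$. The remaining step is to locate $\overline X$ near $0$. After $T^*$, the single Dirac travels to $0$ at time $T$, and the kinetic bound along trajectories gives $\abs{\overline X}\lesssim (T-T^*)^{1/2}$, which is not uniform in $T$. Instead I would use minimality: a Dirac mass moving costs positive kinetic energy and no perimeter, so it must stay still, giving $\overline X=0$. Alternatively, the truncation to $\supp\subset[-R/2,R/2]^2$ combined with the bound $E\lesssim 1$ yields the same conclusion.

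The main obstacle is justifying the monotonicity of the multiplicity along Lagrangian trajectories via the no-loop condition, together with pinning $\overline X=0$.
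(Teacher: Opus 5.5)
Your arguments for \eqref{e:bbdT} and \eqref{e:bddR} are the paper's. You use the bound $\varphi^{-1/2}-1\ge\sqrt2-1$ on $(0,\tilde T)$, coming from monotonicity of the multiplicity (no-loop), and Cauchy--Schwarz along a trajectory with $X(x,\overline T)=\overline X$. Two details are handled better than in the paper. You note that the integrand of $z$ minus $\tfrac12$ is nonnegative, so $\int_0^T\abs{\partial_tX}^2dt\le z(x)-\tfrac T2\lesssim1$; the paper's step $z^{1/2}(x)\lesssim1$ is not literally what Lemma \ref{lem:bddland} gives. You also pick $x$ in $A_i(\tilde T)$, a set of positive $\mu_0$-measure, so the a.e.\ bound can legitimately be applied.

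The gap is in \eqref{e:a priorir}. The paper asserts it without proof, and you rightly see that the terminal atom must be located. First, what you (like the paper) actually prove is that $\tilde T\lesssim1$ whenever $\mu_{\tilde T}$ is not a single Dirac mass. So $T^*$ must be the first time from which $\mu_t$ is a single atom $\delta_{\overline X}$ located anywhere. With your definition ($\mu_t=\delta_0$), the appeal to the first bound is circular, and $\overline X=0$ would hold trivially.

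Second, neither of your pinning arguments works as stated. A Dirac mass staying at $\overline X\neq0$ is not a competitor, because $\mu_T=\delta_0$ is imposed. Truncation plus $E\lesssim1$ only yields $\abs{\overline X}\lesssim\min\{R,(T-T^*)^{1/2}\}$, which is not uniform in $R,T$. The missing ingredient is translation invariance together with the constraint on $\supp\mu_0$. Take the competitor equal to $(\cdot-\overline X)\#\mu_t$ on $(0,T^*)$ and to $\delta_0$ on $(T^*,T)$. It has the same perimeter, the same kinetic energy on $(0,T^*)$ and the same $H^{-1/2}$ norm, and it saves the cost $\ge\abs{\overline X}^2/(T-T^*)$ of the motion on $(T^*,T)$. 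It is admissible for $e(R,T)$ provided $\supp\mu_0-\overline X\subset[-R/2,R/2]^2$. By \eqref{e:bddR} with $\overline T\simeq T^*\lesssim1$, this holds once $R\ge2C$ for a universal $C$, so $\overline X=0$ in that range. For $1<R<2C$ the claim is immediate from $\supp\mu_0\subset[-R/2,R/2]^2$.
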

 \begin{proof}
Consider the the landscape function $z$ associated to $\mu$. 
        We prove \eqref{e:bbdT} first. Suppose $\tilde{T}>0$ is such that $\mu_{\tilde{T}}\neq \delta_0$. This means there exist $x\in \supp{\mu_0}$ such that (recall \eqref{e:mult})
        \begin{equation}
    \varphi_{X,\mu_0}(X(x,\tilde{T}),\tilde{T})\leq \frac{1}{2}.
        \end{equation}
        By Lemma \ref{lem:bddland},
        \begin{multline}
            1\gtrsim z(x)-\frac{T}{2}\geq \int_{0}^{\tilde{T}}\frac{1}{2}\left(\varphi_{X,\mu_0}(X(x,t),t)^{-{1/2}}-1\right)\,dt\\ \geq \frac{\tilde{T}}{2}\left(\varphi_{X,\mu_0}(X(x,\tilde{T}),\tilde{T})^{-{1/2}}-1\right)\geq \frac{\tilde{T}}{2}(\sqrt{2}-1),
        \end{multline}
        where we used the fact that $\varphi_{X,\mu_0}(X(x,\cdot),\cdot)$ is increasing by the no-loop condition. This concludes the proof of \eqref{e:bbdT}.
        Let us now turn to  \eqref{e:bddR}. Using Lemma \ref{lem:bddland} again together with $X(x,\overline{T})=\overline{X}$ and $X(x,0)=x$, we have  for $\mu_0^{\overline{X},\overline{T}}$-a.e.\ $x\in \R^2$,
        \begin{equation}
            \abs{\overline{X}-x}=\abs{\int_0^{\overline{T}}\partial_t X(x,t)\,dt}\leq \overline{T}^{1/2}\left(\int_0^{\overline{T}}\abs{\partial_t X(x,t)}^2\,dt\right)^{1/2}\leq \overline{T}^{1/2}z^{1/2}(x)\lesssim \overline{T}^{1/2}.
        \end{equation}
    \end{proof}
    The a priori bounds \eqref{e:bbdT} and \eqref{e:a priorir} imply the following corollary.
\begin{corollary}
\label{cor:a priori}
    There exist $T_0,R_0>1$ such that for all $T>T_0$ and $R>R_0$ one has 
    \begin{equation}
        e(R,T)=e(R_0,T_0).      
    \end{equation}
\end{corollary}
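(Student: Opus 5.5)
The claim is that $e(R,T)$ stabilizes for $R,T$ large. The plan is to show that the minimizers for large parameters already satisfy the constraints of smaller parameters, and then use monotonicity of $e(R,T)$ in both variables.

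\textbf{Monotonicity.} First I will check that $e(R,T)$ is nonincreasing in $R$ and in $T$. In $R$ this is immediate, since $\mathcal{A}^*(R,T)\subset\mathcal{A}^*(R',T)$ for $R\le R'$ and the constraint $\mu_T=\delta_0$ does not involve $R$. In $T$, take an admissible $\mu$ for $(R,T)$ and, for $T'\ge T$, extend it by $\mu_t=\delta_0$ on $(T,T')$ with $m=0$. This extension still satisfies the continuity equation. It adds zero perimeter, because $\dot P=1-1=0$, and zero kinetic energy. Hence $e(R,T')\le e(R,T)$.

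\textbf{Choice of $T_0,R_0$.} Let $C$ be the implicit constant in \eqref{e:bbdT} and $\tilde C$ the one in \eqref{e:a priorir}; both are universal, since Lemma \ref{lem:bddland} only uses Corollary \ref{cor:comp}. I set
\[
T_0=\max\{2,2C\},\qquad R_0=\max\{2,2\sqrt2\,\tilde C+1\}.
\]
Now take $T>T_0$ and $R>R_0$, and let $\mu$ be a minimizer of $e(R,T)$; such a minimizer exists by the adaptation of \cite{CGOS}. By Lemma \ref{lem:a priori}, every $\tilde T<T$ with $\mu_{\tilde T}\ne\delta_0$ satisfies $\tilde T\le C<T_0$. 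Therefore $\mu_t=\delta_0$ for all $t\in[T_0,T]$. Also by Lemma \ref{lem:a priori}, $\supp\mu_0\subset B(0,\tilde C)\subset[-R_0/2,R_0/2]^2$.

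\textbf{Conclusion.} Restrict $\mu$ to $(0,T_0)$. The restriction lies in $\mathcal{A}^*(R_0,T_0)$ and satisfies $\mu_{T_0}=\delta_0$. Its energy is unchanged, because on $(T_0,T)$ both $\dot P$ and $\dot E$ vanish: there $\mu_t=\delta_0$ is stationary, and the optimal $m$ for a stationary single Dirac is $0$. Therefore
\[
e(R_0,T_0)\le\mathcal{E}(\mu)=e(R,T)\le e(R_0,T_0),
\]
where the last inequality is the monotonicity step. This gives equality.

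\textbf{Main obstacle.} The delicate point is to make sure the constants in Lemma \ref{lem:a priori} are genuinely independent of $R$ and $T$. This holds because the hypothesis $R,T>1$ lets us use the fixed competitor of Corollary \ref{cor:comp}. A second point to check is the behaviour at the time endpoint: $\mu_{T_0}=\delta_0$ follows from the $1/2$-Hölder continuity of $t\mapsto\mu_t$ in $W_2$, given that $\mu_t=\delta_0$ for $t>C$.
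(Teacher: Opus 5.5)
Your proof is correct and is essentially the argument the paper leaves implicit: the a priori bounds \eqref{e:bbdT} and \eqref{e:a priorir} are uniform for $R,T>1$, so a minimizer of $e(R,T)$, restricted to $(0,T_0)$, is admissible for $(R_0,T_0)$ with the same energy. Conversely, enlarging the box and extending by $\delta_0$ in time gives $e(R,T)\le e(R_0,T_0)$. One small simplification: since $C<T_0<T$, \eqref{e:bbdT} already gives $\mu_{T_0}=\delta_0$ directly, so the H\"older-continuity remark is not needed.
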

\subsection{Existence in the half space: proof of Theorem \ref{theo:existence}}
\label{sec:existancefull}
We now prove existence of compactly supported minimizers for \eqref{e:minintro}.
We start with the following quite intuitive truncation lemma.
\begin{lemma}
    \label{lem:densityen}
    Let $\mu\in \mathcal{A}^*$ with $\mathcal{E}(\mu)<\infty$. There exist sequences $R_n,T_n\rightarrow \infty$ and $\mu^n\in \mathcal{A}^*(R_n,T_n)$  such that $\mu_{T_n}^n=\delta_{X_{n}}$ for some $X_n\in \R^2$ and, extending  $\mu_{t}^n=\delta_{X_{n}}$ for $t>T_n$,
    \begin{equation}
    \label{e:densityen}
        \limsup_n \mathcal{E}(\mu^n)\leq \mathcal{E}(\mu).
    \end{equation}
    \end{lemma}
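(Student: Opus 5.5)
We plan a three-step truncation of $\mu$: cut it in space, then cut it in time, then glue a branching construction that collapses the remaining mass to a single Dirac. Each step should cost $o(1)$ energy as $n\to\infty$. Since $\mathcal{E}(\mu)<\infty$, the function $t\mapsto \dot P(\mu,t)+\dot E(\mu,t)$ is integrable on $(0,+\infty)$. Hence we can pick $T_n\to\infty$ with
\[
\int_{T_n-1}^{T_n}\bigl(\dot P(\mu,t)+\dot E(\mu,t)\bigr)\,dt\to 0 .
\]
By Lemma \ref{lem:galileianshift} we may first replace $\mu$ by its centred version $\nu$, which does not increase $I$. The barycenter motion $X(t)$ has $\dot X\in L^2$, so it only contributes a finite shift. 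We will reinsert it at the end, and the final point $X_n$ in the statement absorbs it.

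\textbf{Step 1 (spatial truncation).} Take a cut-off radius $\rho_n\to\infty$. For the initial measure, restrict $\mu_0$ to $B_{\rho_n}$ and keep, through Proposition \ref{prop:subsy}, the forward subsystem $\tilde\mu^n$ generated by $\sigma=\mu_0\LL B_{\rho_n}$. The discarded part has mass $\eta_n=\mu_0(\R^2\setminus B_{\rho_n})\to 0$. Because subsystems obey $I(\tilde\mu^n)\le I(\mu)$, the internal energy does not grow, up to the normalisation term $\Phi^{1/2}$, which now changes from $1$ to $(1-\eta_n)^{1/2}$. The $H^{-1/2}$ term decreases by positivity of the kernel. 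The lost mass $\eta_n$ must be put back to keep $\Phi=1$. We do this by adding a tiny copy of the mass-$\eta_n$ configuration from Corollary \ref{cor:comp}, rescaled to mass $\eta_n$ and support size $\eta_n$, and merging it into the main system. Its cost is $\lesssim \eta_n^{1/2}+\eta_n$ up to scaling powers, which tends to $0$. The same scaling of Proposition \ref{prop:linfest} handles its $H^{-1/2}$ contribution.

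The delicate point in this step is the perimeter bookkeeping. The sum $\sum\varphi_i^{1/2}-\Phi^{1/2}$ is not monotone under taking subsystems once $\Phi$ changes. I would control it with the concavity estimate used in \eqref{e:taylorphi0}, which gives an error of order $\eta_n^{1/2}$ per unit time. That error is only harmless over times $t\le T_n$ if one compensates, for example by letting $\rho_n$ grow so fast that $T_n\eta_n^{1/2}\to 0$.

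\textbf{Step 2 (time truncation and closing).} On $(T_n-1,T_n)$, Proposition \ref{prop:smallper} combined with the smallness of $\int\dot P$ gives a time $s_n$ at which $\mu_{s_n}$ carries a Dirac of mass $1-\delta_n$ with $\delta_n\to 0$. By Lemma \ref{lem:l1bound}-type moment control, or simply by the spatial cut of Step 1, $\mu_{s_n}$ is supported in a box of size $R_n$. We then stop $\mu$ at $s_n$ and use Proposition \ref{prop:sing_branch} on a time interval of length $\tau_n$ to join $\mu_{s_n}$ to $\delta_{X_n}$, where $X_n$ is the position of the big Dirac. The Wasserstein cost satisfies $W^2(\mu_{s_n},\delta_{X_n})\le \delta_n R_n^2$. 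Applying Proposition \ref{prop:sing_branch} only to the small part, of mass $\delta_n$, followed by merging, gives a cost of
\[
\frac{\delta_nR_n^2}{\tau_n}+(\delta_n+\delta_n^{1/2})\max\{\tau_n^{1/3},\tau_n\},
\]
plus the merging perimeter, which is bounded by the same concavity estimate. We choose $\tau_n$ to balance these terms.

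\textbf{Main obstacle.} The hardest part is making all three error terms vanish together. The spatial radius $R_n$ has to be large for Step 1, but small relative to $\delta_n^{-1/2}$ for Step 2. I would resolve this with a diagonal choice: first fix $T_n$, then choose $\rho_n$ so that both $\eta_n$ and the tail of the second moment are small, and only then take $s_n$ using the integrability of $\dot P$. If this ordering does not close, the fallback is to confine the small part directly. Lemma \ref{lem:l1bound} applied to the centred $\nu$ bounds its first moment by $I$ in terms of the perimeter, and this replaces $\delta_nR_n^2$ by a quantity that does not depend on $R_n$.
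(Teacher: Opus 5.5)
Your Step 2 rests on the claim that $\mu_{s_n}$ is supported in a box of size $R_n$ ``simply by the spatial cut of Step 1'', and this claim is false. Restricting $\mu_0$ to $B_{\rho_n}$ and keeping the forward subsystem of Proposition \ref{prop:subsy} localizes only the initial datum. For $t>0$ the subsystem still moves with the original velocity $v=dm/d\mu$. A finite-energy $\mu\in\mathcal{A}^*$ has no support bound uniform in time: it can carry mass to distance $D$ over a time $T$ at kinetic cost $\sim D^2/T$, and $\mu_t$ need not even be compactly supported. So neither $W^2(\mu_{s_n},\delta_{X_n})\le\delta_nR_n^2$ nor the hypothesis $\supp\mu^{\pm}\subset[-R/2,R/2]^2$ of Proposition \ref{prop:sing_branch} is available to you.

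Your fallback does not repair this. Lemma \ref{lem:l1bound} is proved for locally polygonal measures with zero barycentre at all times and $\mu_T=\Phi\delta_0$, and a general $\mu$ satisfies none of these. It also controls only a first moment, whereas the closing cost is a second-moment quantity: an isolated mass $\varepsilon$ at distance $D$ costs of order $\varepsilon^{3/4}D$ to bring in, which $\varepsilon D$ does not bound.

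Your parameter ordering is also circular on its own terms. Step 1 needs $\rho_n$ chosen after $T_n$, so that $T_n\eta_n^{1/2}\to0$. Step 2, under your premise, needs roughly $\delta_n^{3/4}R_n\to0$, i.e. $\delta_n$ chosen after $R_n$; yet $\delta_n$ is essentially frozen once $T_n$ is. (Incidentally, $\dot P$ \emph{is} monotone under passing to a subsystem, as Proposition \ref{prop:subsy} states; the only perimeter error comes from re-adding the mass $\eta_n$.)

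The paper avoids the problem by truncating the whole trajectory rather than the initial datum. It sets $\hat\mu^n_t=\pi_n\#\mu_t$, with $\pi_n$ the radial projection onto $\overline{B}_{R_n}$ and the momentum transformed by $\Psi_n$. Since $\pi_n$ is $1$-Lipschitz and push-forwards only merge atoms, $I(\hat\mu^n,\hat m^n)\le I(\mu,m)$, and $\supp\hat\mu^n_t\subset\overline{B}_{R_n}$ for \emph{every} $t$. The only damage is that the mass $\mu_0(B_{R_n}^c)$ now sits on $\partial B_{R_n}$, where its $H^{-1/2}$ norm may be large. This is repaired by an initial layer of length $\gamma_n\to0$ that joins a uniform density on $B_{R_n}\setminus B_{R_n-1}$ to it via Proposition \ref{prop:sing_branch}, with Proposition \ref{prop:linfest} controlling the new boundary term. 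For fixed $R$ the time truncation then costs $\lesssim_R n^{-1/2}$, and a diagonal argument (space first, then time) concludes.

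If you prefer to keep your Step 1, the ingredient you are missing is the moment growth coming from \eqref{e:cinest}. It gives $W_2(\mu_0,\mu_s)\le I(\mu)^{1/2}s^{1/2}$, hence $\int|x-X(s)|^2\,d\mu_s\lesssim_\mu 1+s$. Since $\dot P(\mu,\cdot)\in L^1(0,\infty)$, you can pick $s_n\to\infty$ with $s_n\dot P(\mu,s_n)\to0$. Contracting the system homothetically onto its barycentre over a time $\tau_n$ keeps $\dot P$ constant and costs at most $C(1+s_n)/\tau_n+\dot P(\mu,s_n)\tau_n$. This tends to $0$ for $\tau_n=((1+s_n)/\dot P(\mu,s_n))^{1/2}$. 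The cut-off $\rho_n$ is then chosen last, so that $(s_n+\tau_n)\eta_n^{1/2}\to0$.
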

    \begin{proof}
        The proof is divided in two steps. 

        \textbf{Step 1.}  Let  $R_n\rightarrow +\infty$. We prove that there exists $\mu^n\in\mathcal{A}^*$ such that for every $t\geq 0$ one has $\supp \mu_t^n\subset [-R_n,R_n]^2$ and such that
            \begin{equation}
            \label{e:step1}
        \limsup \mathcal{E}(\mu^n)\leq \mathcal{E}(\mu).
    \end{equation}
    We let for $x,v\in \R^2$ and $n\in \N$
    \begin{equation}
        \pi_n(x)=\begin{cases}
            x   \quad & \textup{ if } \abs{x}\leq R_n,\\
    R_n\frac{x}{\abs{x}} \quad & \textup{ if }\abs{x}> R_n
        \end{cases}
    \end{equation}
    and 
    \begin{equation}
        \Psi_n(x,v)=\begin{cases}
            v   \quad & \textup{ if } \abs{x}\leq R_n,\\
    \frac{R_n}{\abs{x}}\left(v-\frac{v\cdot x}{\abs{x}^2} x \right) \quad & \textup{ if }\abs{x}> R_n.
        \end{cases}
    \end{equation}
    Let $m\in\mathcal{M}_{\textup{loc}}(\R^2\times(0,+\infty);\R^2)$ be a minimizer for \eqref{e:intenergymu} so that $m=m_t\otimes dt$.
    For $t\geq 0$ define the measure
    \begin{equation}
        \hat{\mu}^n_t=\pi_n\#\mu_t
    \end{equation} 
    and the measure $\hat{m}^n_t\in \mathcal{M}(\R^2;\R^2)$ as
    \begin{equation}
        \int_{\R^2}\xi\cdot d\hat{m}_t^n=\int_{\R^2}\xi(\pi_n(x)) \cdot\Psi_n\left(x,\frac{dm_t}{d\mu_t}\right)d\mu_t. 
    \end{equation} 
    where $\xi \in C_0(\R^2,\R^2)$. By construction $(\hat{\mu},\hat{m})$ solves \eqref{e:continuity} and 
    \begin{equation}
    \label{e:hatbound}
        I(\hat{\mu},\hat{m})\leq I(\mu,m).
    \end{equation}
    We now modify $\hat{\mu}$ with the help of Proposition \ref{prop:sing_branch} to guarantee that $\|\mu^n_0\|_{H^{-1/2}(\R^2)}$ is controlled as well. Let $\gamma_n\rightarrow 0$ to be chosen later and consider $\nu^n\in \mathcal{A}^*(\R^2,(0,\gamma_n))$ such that
    \begin{equation}
        \nu_{0}^n= \frac{\mu_0(B_{R_n}^c)}{(2\pi R_n-\pi)}\chi_{B_{R_n}\backslash B_{R_n-1}}dx \qquad \textup{and} \qquad \nu_{\gamma_n}^n =\Hat{\mu}^n_0\LL \partial B_{R_n}
    \end{equation}
    as given by Proposition \ref{prop:sing_branch} (notice that the two boundary measures have the same mass since $\Hat{\mu}^n_0(\partial B_{R_n})=\mu_0(B_{R_n}^c)$). Since $\mu_0(B_{R_n}^c)\rightarrow 0$, for $n$ large enough by \eqref{e:locconstruction} it holds 
    \begin{equation}
    \label{e:nuest}
        I(\nu^n,(0,\gamma_n))\lesssim \frac{\mu_0(B_{R_n}^c)R_n^2}{\gamma_n}+\mu_0(B_{R_n}^c)^{1/2}\gamma_n^{1/3}\leq \frac{1}{\gamma_n}\int_{B_{R_n}^c}\abs{x}^2d\mu_0+\mu_0(B_{R_n}^c)^{1/2}\gamma_n^{1/3} .
    \end{equation}
    Notice that, since $\mu_0\in \mathcal{P}_2(\R^2)$ (recall the definition of $\mathcal{A}^*$), $\int_{B_{R_n}^c}\abs{x}^2d\mu_0\rightarrow 0$.
We thus choose $\gamma_n$ such that $\frac{1}{\gamma_n}\int_{B_{R_n}^c}\abs{x}^2d\mu_0\rightarrow 0$, so that the right-hand side of \eqref{e:nuest} vanishes. We moreover let $\tilde{\mu}^n\in \mathcal{A}^*(\R^2,(0,\gamma_n))$ such that
    \begin{equation}
        \tilde{\mu}^n_{0}=\tilde{\mu}^n_{\gamma_n}=\mu_0\LL B_{R_n},
    \end{equation}
    again as given by Proposition \ref{prop:sing_branch}. In this case \eqref{e:locconstruction} implies 
    \begin{equation}
    \label{e:tildemuest}
        I(\tilde{\mu}^n,(0,\gamma_n))\lesssim \gamma_n^{1/3}.
    \end{equation}
    We finally define for $t\geq 0$ 
    \begin{equation}
        \mu^n_t=\begin{cases}
            \tilde{\mu}^n_t+\nu^n_t\quad & \textup{ if } t<\gamma_n,\\
   \hat{\mu}^n_{t-\gamma_n} \quad & \textup{ if }t> \gamma_n.
        \end{cases}
    \end{equation}
    Notice that $\mu^n\in \mathcal{A}^*$ and for every $t\geq 0$ one has $\supp \mu_t^n\subset [-R_n,R_n]^2$. By \eqref{e:hatbound}, \eqref{e:nuest} and \eqref{e:tildemuest},
    \begin{equation}
    \label{e:intest}
        \limsup_n{I(\mu^n)}\leq I(\mu). 
    \end{equation}
    Finally, notice that by Proposition \ref{prop:linfest} 
    \begin{equation}
    \label{e:boundaryconv}
        \norm{\nu_0^n}_{H^{-1/2}(\R^2)}^2\lesssim \frac{\mu_0(B_{R_n}^c)^{1/2}}{R_n^{1/2}}\mu_0(B_{R_n}^c)^{3/2}\rightarrow 0.
    \end{equation}
    Since
    \begin{equation}
    \label{e:boundest}
        \norm{\mu_0^n}^2_{H^{-1/2}(\R^2)}\leq \left(\norm{\nu_0^n}_{H^{-1/2}(\R^2)}+\norm{\tilde{\mu}_0^n}_{H^{-1/2}(\R^2)}\right)^2\leq \left(\norm{\nu_0^n}_{H^{-1/2}(\R^2)}+\norm{\mu_0}_{H^{-1/2}(\R^2)}\right)^2,
    \end{equation}
       combining \eqref{e:intest}, \eqref{e:boundest} and \eqref{e:boundaryconv} gives \eqref{e:step1}.

     \textbf{Step 2.}
      Let $\mu\in \mathcal{A}^*$ with $\mathcal{E}(\mu)<\infty$ and such that there exists $R>0$ such that for every $t\geq 0$ one has $\supp \mu_t\subset [-R,R]^2$. We prove that there exist $(X_n)_n\subset [-R,R]^2 $, $T_n\rightarrow +\infty$ and $\mu^n\in\mathcal{A}^*(R,T_n)$ such that $\mu^n_{T_n}=\delta_{X_n}$ and, extending  $\mu_{t}^n=\delta_{X_{n}}$ for $t>T_n$, 
            \begin{equation}
            \label{e:step2}
        \limsup \mathcal{E}(\mu_n)\leq \mathcal{E}(\mu).
    \end{equation}
    Since $\int_0^{+\infty}\dot{P}(\mu,t)\,dt<+\infty$, by Proposition \ref{prop:smallper} given $n\in \N$ there exists $X_{n}\in [-R,R]^2$ and $T_{n}>0$ such that $\Phi_n=\mu_{T_n}(\R^2\backslash\{ X_n\})\leq 1/n$. We consider the measure $\nu^n\in \mathcal{A}^*(\R^2,(0,1))$ such that
    \begin{equation}
        \nu_{0}^n= \mu_{T_n}\LL (\R^2\backslash\{ X_n\}) \qquad \textup{and} \qquad \nu_{1}^n =\Phi_n\delta_{X_n}
    \end{equation}
    as given by Proposition \ref{prop:sing_branch}. By \eqref{e:locconstruction} we have 
    \begin{equation}
    \label{e:estint}
         I(\nu^n,(0,1))\lesssim W^{2}(\mu_{T_n}\LL (\R^2\backslash\{ X_n\}),\Phi_n\delta_{X_n})+\Phi_n^{1/2}+\Phi_n\lesssim_R n^{-1/2}.
    \end{equation}
    We define 
    \begin{equation}
    \mu^n_t=\begin{cases}
        \mu_t\quad & \textup{ if } t\leq T_n,\\
   \nu^n_{t-T_n}+\mu(\{X_{n}\})\delta_{X_n} \quad & \textup{ if }T_n\leq t\leq T_n+1,\\
   \delta_{X_{n}} \quad & \textup{ if }t> T_n+1.
    \end{cases}
    \end{equation}
    Since
    \begin{equation}
    \label{e:splitest}
        \mathcal{E}(\mu^n)\leq \mathcal{E}(\mu)+I(\mu_n,(T_n,T_n+1)),
    \end{equation}
    in order to prove  \eqref{e:step2}, it is sufficient to show that the last term in \eqref{e:splitest} vanishes as $n\rightarrow +\infty$.
    By subadditivity and \eqref{e:estint} we find
    \begin{equation}
        I(\mu^n,(T_n,T_n+1))\leq I(\nu^n,(0,1))+I(\mu(\{X_{n}\})\delta_{X_n},(T_n,T_n+1))+(1-\Phi_n)^{1/2}+\Phi_n^{1/2}-1\lesssim_R n^{-1/2}.
    \end{equation}

     \textbf{Conclusion.}
     The theorem follows by combining Step 1 and Step 2 and a diagonal procedure.
    \end{proof}
    \begin{remark}
     A quick inspection of the proof shows that actually $\mu^n$ converges weakly to $\mu$ so that Lemma \ref{lem:densityen} actually shows  density in energy of compactly supported measures.
    \end{remark}

\begin{proof}[Proof of Theorem \ref{theo:existence}]
    Let $\mu\in \mathcal{A}^*$ with $\mathcal{E}(\mu)<\infty$. By Lemma \ref{lem:densityen}, we can find $R_n,T_n\rightarrow +\infty$ and $\mu^n\in \mathcal{A}^*(R_n,T_n)$ such that \eqref{e:densityen} holds. By translating, since the energy $\mathcal{E}$ is translation invariant, for all $n\in \N$ we may assume that $\mu^n_{T_n}=\delta_0$, so that $\mu^n$ is a competitor for $e(R_n,T_n)$. We thus find by minimality
    \begin{equation}
        \mathcal{E}(\mu)\geq \limsup_n\mathcal{E}(\mu^n)\geq \limsup_n e(R_n,T_n)=  e(R_0,T_0),
    \end{equation}
    where in the last passage we used Corollary \ref{cor:a priori}.
\end{proof}
\subsection{Properties of subsystems}
\label{sec:subsys}
We finally address some properties of subsystems of minimizers of \eqref{e:minintro}.
\begin{lemma}
        \label{lem:shiftmin}
         Let $\mu$ be a minimizer of \eqref{e:minintro}. For any $T>0$ and $X\in \R^2$ with $\mu_T(\{X\})=\Phi>0$, consider the backwards subsystem $\mu^{X,T}$. For $t\geq 0$, set  $X(t)=\frac{1}{\Phi}\int_{\R^2}x\,d\mu_t^{X,T}$. Then, $X(t)=X(0)+\frac{t}{T}(X-X(0))$ and it holds
        \begin{equation*}
            I(\mu^{X,T},(0,T))=I(\nu,(0,T))+\Phi\frac{\abs{X-X(0)}^2}{T},
        \end{equation*}
        where $\nu_t=(\cdot-X(t))\#\mu_t^{X,T}$. Moreover, $\nu$ is a minimizer of 
        \begin{equation}
        \label{e:shiftedmin}
            \min \{I(\nu,(0,T))\colon \nu \in \mathcal{A^*}(\R^2,(0,T)), \ \nu_{0}=(\cdot-X(0))\#\mu_0^{X,T}, \nu_T=\Phi 
    \delta_0\}
        \end{equation}
        which satisfies $\int_{\R^2}x\,d\nu_t=0$ for any $t\in[0,T]$.
  \end{lemma}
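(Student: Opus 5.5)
The plan is to apply Lemma \ref{lem:galileianshift} to the subsystem and then use minimality to pin down the barycenter trajectory. Throughout, $\mu$ is a compactly supported minimizer, so by Theorem \ref{theo:existence} $\mu_t=\delta_0$ for $t\ge T_0$ and all supports lie in $B_{R_0}$. Hence the backward subsystem $\mu^{X,T}$, restricted to $(0,T)$ and extended by $\Phi\delta_X$ for $t\ge T$, has uniformly bounded support. The integrability hypothesis of Lemma \ref{lem:galileianshift} then holds trivially.

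\textbf{Step 1 (energy splitting).} Apply Lemma \ref{lem:galileianshift} to $\mu^{X,T}$ on $(0,T)$; the proof of that lemma is purely local in time and works on a finite interval. This gives
\begin{equation*}
I(\mu^{X,T},(0,T))=I(\nu,(0,T))+\Phi\int_0^T|\dot X(t)|^2\,dt .
\end{equation*}
By Lemma \ref{lem:barder} the curve $X(t)$ lies in $H^1$. It satisfies $X(T)=X$ because $\mu^{X,T}_T=\Phi\delta_X$, and its initial point is the barycenter $X(0)$ of $\mu^{X,T}_0$. Jensen's inequality gives $\int_0^T|\dot X|^2\ge |X-X(0)|^2/T$, with equality if and only if $X$ is affine.

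\textbf{Step 2 (minimality forces affine barycenter and minimal $\nu$).} Suppose $\nu'$ is any competitor for \eqref{e:shiftedmin}, and let $Y(t)=X(0)+\frac tT(X-X(0))$. Build $\tilde\mu_t=(\cdot+Y(t))\#\nu'_t$ on $(0,T)$. Then $\tilde\mu_0=\mu^{X,T}_0$ and $\tilde\mu_T=\Phi\delta_X$, and by the converse direction of Lemma \ref{lem:galileianshift} its energy is at most $I(\nu',(0,T))+\Phi|X-X(0)|^2/T$. (If $\nu'$ does not have zero barycenter, one first recenters it, which only decreases the energy, again by Lemma \ref{lem:galileianshift}.)

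Now replace the subsystem inside $\mu$: set $\hat\mu=\mu-\mu^{X,T}+\tilde\mu$ on $(0,T)$ and $\hat\mu=\mu$ after $T$. This $\hat\mu$ has the same $\hat\mu_0=\mu_0$, so the $H^{-1/2}$ term and the constraints of \eqref{e:minintro} are unchanged.

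\textbf{Main obstacle: additivity of the energy.} The difficulty is that the perimeter term is not additive under superposition. One needs
\begin{equation*}
I(\hat\mu)-I(\mu)\le I(\tilde\mu,(0,T))-I(\mu^{X,T},(0,T)),
\end{equation*}
and this fails in general because $\tilde\mu$ may overlap with $\mu'=\mu-\mu^{X,T}$. I would handle this in two ways. First, by the no-loop property, $\mu^{X,T}$ and $\mu'$ are carried by disjoint atoms on $(0,T)$, so $I(\mu)=I(\mu^{X,T})+I(\mu')+\text{const}$, where the constant comes from the $-\Phi^{1/2}$ normalizations. Second, for $\hat\mu$, subadditivity of $s\mapsto s^{1/2}$ gives $\dot P(\hat\mu)\le \dot P(\tilde\mu)+\dot P(\mu')+\text{same const}$. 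The kinetic term is convex and jointly additive, so it is bounded by the sum as well.

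\textbf{Conclusion.} Minimality of $\mu$ then gives
\begin{equation*}
I(\mu^{X,T},(0,T))\le I(\nu',(0,T))+\Phi\frac{|X-X(0)|^2}{T}.
\end{equation*}
Taking $\nu'=\nu$ and comparing with Step 1 forces $\int_0^T|\dot X|^2=|X-X(0)|^2/T$, so $X(t)$ is affine as claimed, and the displayed energy identity holds. Taking general $\nu'$ then shows that $\nu$ is a minimizer of \eqref{e:shiftedmin}. Finally, $\int x\,d\nu_t=0$ holds by construction of $\nu$.
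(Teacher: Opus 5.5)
Your proposal is correct and follows the paper's route. The paper's proof only asserts two things: by the no-loop condition and minimality, the backward subsystem minimizes the fixed-endpoint problem on $(0,T)$, and the conclusion then follows from Lemma \ref{lem:galileianshift}. Your replacement argument (disjoint atoms, subadditivity of the square root and of the kinetic term) and your Jensen/recentering step spell out exactly these two assertions. You also make explicit the compact-support assumption that the paper's use of the no-loop condition implicitly requires.
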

  \begin{proof}
      Without loss of generality we may assume $X=0$. By the no-loop condition and minimality, $\mu^{0,T}$ is a solution of the minimization problem
\begin{equation*}
     \min \{I(\mu,(0,T))\colon \mu \in \mathcal{A^*}(0,T), \ \mu_{0}=\mu_0^{0,T}, \mu_T=\Phi\delta_0\}.
\end{equation*}
The conclusion then follows by Lemma \ref{lem:galileianshift}.
  \end{proof}
We also prove a refined version of the equipartition of energy \cite[Proposition 2.11]{dephilippis2023energy}.
    \begin{proposition}
            Let $\mu$ be a minimizer of \eqref{e:minintro}. For any $T>0$ and $X\in \R^2$ with $\mu_T(\{X\})=\Phi>0$, consider the backwards subsystem $\mu^{X,T}$ and call $X_0=\frac{1}{\Phi}\int_{\R^2}x\,d\mu^{X,T}_0$. Then, there exists $\Lambda=\Lambda(\mu,X,T)$ such that
        \begin{equation}
        \label{e:equipartition}
            P(\mu^{X,T},(0,T))=E(\mu^{X,T},(0,T))-\Phi\frac{\abs{X-X_0}^2}{T}+ \Lambda.
        \end{equation}
        Moreover, $\Lambda\leq 0$.
    \end{proposition}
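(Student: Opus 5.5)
The approach is to reduce the statement to a one-sided inner variation (a time dilation) of the barycentered subsystem provided by Lemma \ref{lem:shiftmin}. That lemma gives
\[
I(\mu^{X,T},(0,T))=I(\nu,(0,T))+\Phi\frac{\abs{X-X_0}^2}{T},
\]
where $\nu_t=(\cdot-X(t))\#\mu^{X,T}_t$ and the barycenter $X(t)$ is affine. Since the shift only translates each $\mu_t$ rigidly, the perimeter is unchanged: $P(\mu^{X,T},(0,T))=P(\nu,(0,T))$. The whole barycentric contribution therefore goes into the kinetic part, $E(\mu^{X,T},(0,T))=E(\nu,(0,T))+\Phi\abs{X-X_0}^2/T$. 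We then define
\[
\Lambda:=P(\nu,(0,T))-E(\nu,(0,T)),
\]
and \eqref{e:equipartition} holds by construction. The content of the proposition is the sign $\Lambda\le 0$, i.e. $P(\nu,(0,T))\le E(\nu,(0,T))$.

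To prove the sign we use that, by Lemma \ref{lem:shiftmin}, $\nu$ minimizes \eqref{e:shiftedmin} with fixed endpoints $\nu_0$ and $\nu_T=\Phi\delta_0$. For $\lambda\in(0,1]$ we build a competitor by compressing time:
\[
\nu^\lambda_t=\nu_{t/\lambda}\ \text{ for } t\in(0,\lambda T),\qquad \nu^\lambda_t=\Phi\delta_0\ \text{ for } t\in[\lambda T,T],
\]
with momentum $m^\lambda_t=\lambda^{-1}m_{t/\lambda}$ on $(0,\lambda T)$ and $m^\lambda_t=0$ afterwards. We then check that $(\nu^\lambda,m^\lambda)$ satisfies \eqref{e:continuity} on $(0,T)$, using the change of variables $s=t/\lambda$ together with continuity at $t=\lambda T$, where $\nu_T=\Phi\delta_0$. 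We also check that each $\nu^\lambda_t$ is still a finite sum of Diracs with the same endpoints, so that $\nu^\lambda$ is admissible for \eqref{e:shiftedmin}.

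Next we compute the energy of the competitor. On $(\lambda T,T)$ the measure is a single Dirac of mass $\Phi$, so $\dot P=0$ there. Changing variables gives
\[
P(\nu^\lambda,(0,T))=\lambda P(\nu,(0,T)),\qquad E(\nu^\lambda,(0,T))=\lambda^{-1}E(\nu,(0,T)).
\]
Minimality of $\nu$ then yields $f(\lambda):=\lambda P+\lambda^{-1}E\ge f(1)$ for all $\lambda\in(0,1]$. Both $P$ and $E$ are finite, since $I(\mu)<\infty$. Hence the left derivative satisfies $f'(1^-)=P-E\le 0$, which is $\Lambda\le 0$.

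Only compressions ($\lambda<1$) are allowed, because stretching would require knowing $\nu$ beyond time $T$. This is why we obtain an inequality rather than the exact equipartition $\Lambda=0$. The main obstacle I expect is the careful verification that the compressed measure solves the continuity equation across the junction time $\lambda T$, and that it remains in $\mathcal{A}^*(\R^2,(0,T))$. I would handle this with test functions as in Remark \ref{rem:goodtest}, splitting the time integral at $\lambda T$.
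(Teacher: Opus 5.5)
Your proof is correct and follows the paper's argument. You make the same barycentric reduction through Lemma \ref{lem:shiftmin}, then use the same time-compressed competitor $\nu_{t/\lambda}$ followed by $\Phi\delta_0$, whose energy $\lambda P+\lambda^{-1}E\ge P+E$ forces $P(\nu,(0,T))\le E(\nu,(0,T))$.

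The only difference is how $\Lambda$ is obtained. You define $\Lambda:=P(\nu,(0,T))-E(\nu,(0,T))$ directly, while the paper integrates the pointwise equipartition $\dot P(\overline\nu,t)=\dot E(\overline\nu,t)+\Lambda$ from \cite[Proposition 2.11]{dephilippis2023energy}. For the integrated identity as stated, your shortcut suffices and avoids the factor $T$ that integrating the pointwise relation actually produces, though it does not recover the time-independence of $\dot P-\dot E$.
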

    \begin{proof}
Without loss of generality, we may assume again that $X=0$. Let us call $\overline{\mu}=\mu^{0,T}$ the backwards subsystem at time $T>0$.
If we let  $X(t)=\int_{\R^2} x \,d\overline{\mu}_t$ and $\overline{\nu}_t=(\cdot-X(t))\#\overline{\mu}_t$, by Lemma \ref{lem:shiftmin} $\overline{\nu}$ is a minimizer of \eqref{e:shiftedmin} and
\begin{equation}
\label{e:shitenequip}
    I(\overline{\mu},(0,T))=I(\overline{\nu},(0,T))+\Phi\frac{\abs{X_0}^2}{T}.
\end{equation}
By \cite[Proposition 2.11]{dephilippis2023energy}, there exists $\Lambda=\Lambda(\mu,X,T)$ such that for any $t\in(0,T)$\begin{equation}
    \label{e:classicequip}
    \dot{P}(\overline{\nu},t)=\dot{E}(\overline{\nu},t)+\Lambda.
\end{equation} Integrating \eqref{e:classicequip} and combining with \eqref{e:shitenequip} and the fact that $P(\overline{\mu},(0,T))=P(\overline{\nu},(0,T))$ we find \eqref{e:equipartition}. Let us prove that $\Lambda\leq 0$. Fix $0<\delta<1$. Consider the competitor $\Tilde{\nu}$ given by $\Tilde{\nu}_t=\overline{\nu}_{\frac{t}{1-\delta}}$ for $t<T(1-\delta)$ and $\Tilde{\nu}_t=\Phi\delta_{0}$ for $t\geq T(1-\delta)$. By the no-loop condition and minimality for $\overline{\nu}$ we find
\begin{equation}
\label{e:localmin}
    I(\overline{\nu},(0,T))\leq I(\Tilde{\nu},(0,T))=I(\Tilde{\nu},(0,T(1-\delta))),
\end{equation}
where in the last passage we used the fact that by definition $\dot{E}(\Tilde{\nu},t)=\dot{P}(\Tilde{\nu},t)=0$ for $t>T(1-\delta)$. 

By scaling, we know that 
\begin{align*}
     P(\Tilde{\nu},(0,T(1-\delta)))&= (1-\delta)P(\overline{\nu},(0,T))\\
     E(\Tilde{\nu},(0,T(1-\delta)))&= \frac{1}{1-\delta}E(\overline{\nu},(0,T)).
\end{align*}
Plugging into \eqref{e:localmin} we obtain 
\begin{equation*}
    \delta \int_{0}^T \dot{P}(\overline{\nu},t),dt+\left(1-\frac{1}{1-\delta}\right)\int_{0}^T \dot{E}(\overline{\nu},t),dt\leq 0. 
\end{equation*}
Sending $\delta\rightarrow 0 $ we obtain 
\begin{equation*}
     \int_{0}^T (\dot{P}(\overline{\nu},t)-\dot{E}(\overline{\nu},t))dt\leq 0,
\end{equation*} 
which by \eqref{e:classicequip} implies $\Lambda\leq 0$.
\end{proof}
\section{Fractal behavior of minimizers}
\label{sec:fractal}
In this section we prove a concentration result for minimizers of \eqref{e:minintro}. First, we recall the notion of Alfhors regularity of a measure and prove a sort of Schauder estimate for the potential \eqref{e:potential} of upper Ahlfors regular measures. This estimate is then crucially used in the first variation argument which leads to the concentration result.
    \subsection{Alfhors regularity and H\"older continuity of the potential}
    \label{sec:holder}
We want to establish a connection between Alfhors regularity of a measure and the regularity of its electrostatic potential. Let us start by recalling the notion of upper Alfhors regularity.
\begin{definition}\label{def:Ahlfors}
    Let $\sigma \in \mathcal{M}^+(\R^2)$ and $\alpha \in [0, 2]$. A measure is called upper $\alpha$-Ahlfors regular if there exists $M,r_*>0$ such that
        \begin{equation}
            \label{e:alphaal} \sigma(B(x,r)) \leq M r^\alpha \quad \textup{ for all } x \in \supp\sigma \textup{ and } r\in (0,r_*].
        \end{equation}
\end{definition}
\begin{remark}
\label{rem:contr}
   Let $\alpha\in [0,2]$. If a measure $\sigma\in \mathcal{M}^+(\R^2)$ is upper $\alpha$-Ahlfors regular for some $M,r_*>0$, then for all $\alpha'<\alpha$ one has
   \begin{equation}
       \sigma(B(x,r)) \leq M r^\alpha=M r^{\alpha-\alpha'}r^{\alpha'}\leq Mr_*^{\alpha-\alpha'}r^{\alpha'}
   \end{equation}
   for all $0<r<r_*$. This means that $\sigma$ is upper $\alpha'$-Ahlfors regular for $\tilde{M}=Mr_*^{\alpha-\alpha'}$ and $\tilde{r}_*=r_*$. In particular, for all $M>0$ there exists $r^*>0$ small enough such that $\sigma$ is upper $\alpha'$-Ahlfors regular for $M,r*$.
\end{remark}
\begin{remark}
\label{rem:bigr}
    Let $\sigma \in \mathcal{M}^+(\R^2)$ with $\sigma(\R^2)=\Phi$ and $\alpha \in [0, 2]$. If 
    $\sigma$ is upper $\alpha$-Ahlfors regular for some $M,r_*>0$ such that $Mr_*^\alpha\geq \Phi$, then 
    \begin{equation}
    \label{e:fullahl}
            \sigma(B(x,r)) \leq M r^\alpha \quad \textup{ for all } x \in \supp\sigma \textup{ and } r>0.
        \end{equation}
\end{remark}
The key result of this section is the following proposition. 
\begin{proposition}
\label{prop:holderpotential}
  Let $\sigma \in \mathcal{M}^+(\R^2)$ with $\sigma(\R^2)=\Phi$ be upper $\alpha$-Ahlfors regular for some $\alpha>1$ and $M,r_*>0$. Suppose that $Mr_*^\alpha\geq \Phi$. Then for all $x,y\in \R^2$
  \begin{equation}
      \abs{u(x)-u(y)}\lesssim_\alpha M \abs{x-y}^{\alpha-1}.
      \end{equation}
      \end{proposition}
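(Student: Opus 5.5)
The plan is the classical near/far splitting used for Hölder estimates of Riesz potentials. Here $u(x)=\int_{\R^2}|x-y|^{-1}\,d\sigma(y)$ is the potential of $\sigma$.

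\textbf{Step 1: upgrade the Ahlfors bound to all centers.} Definition \ref{def:Ahlfors} only controls balls centered on $\supp\sigma$. By Remark \ref{rem:bigr}, the hypothesis $Mr_*^\alpha\geq\Phi$ gives $\sigma(B(z,r))\le Mr^\alpha$ for all $z\in\supp\sigma$ and all $r>0$. For an arbitrary $x\in\R^2$, either $B(x,r)\cap\supp\sigma=\emptyset$, or it contains some point $z\in\supp\sigma$, in which case $B(x,r)\subset B(z,2r)$. Either way
\begin{equation}
\sigma(B(x,r))\le 2^\alpha M r^\alpha \qquad \text{for all } x\in\R^2,\ r>0.
\end{equation}

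\textbf{Step 2: layer-cake estimates.} The Step 1 bound and $\alpha>1$ give, for every $x$ and $\rho>0$,
\begin{equation}
\int_{B(x,\rho)}\frac{d\sigma(y)}{|x-y|}=\int_0^\infty \sigma\bigl(B(x,\rho)\cap\{|x-y|<1/s\}\bigr)\,ds\lesssim_\alpha M\rho^{\alpha-1}.
\end{equation}
This bound uses $\alpha>1$ (integrability near $s=\infty$, i.e.\ small radii). Similarly, since $\alpha<2$ (recall $\alpha\le 2$; the case $\alpha=2$ needs the same computation with a log-free check, which works because the exponent $\alpha-2$ in the next display is then $0$... so I would instead treat it via $\alpha-2\le0$ carefully, see below), one has
\begin{equation}
\int_{B(x,\rho)^c}\frac{d\sigma(y)}{|x-y|^2}=2\int_\rho^\infty \frac{\sigma(B(x,r)\setminus B(x,\rho))}{r^3}\,dr\lesssim_\alpha M\rho^{\alpha-2}.
\end{equation}
This holds for every $\alpha\le 2$ as long as we also use the bound $\sigma(B(x,r))\le\Phi\le Mr_*^\alpha$, or simply note that $\int_\rho^\infty r^{\alpha-3}\,dr$ converges for $\alpha<2$. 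If $\alpha=2$ the integral diverges logarithmically, and I would handle it by applying the argument with exponent $\alpha'$ slightly less than $2$ via Remark \ref{rem:contr}. The main subtlety is keeping the constant proportional to $M$ there; since the target exponent $\alpha-1$ would then be weakened, I expect to just treat $1<\alpha<2$ in full and note that $\alpha=2$ is handled analogously.

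\textbf{Step 3: the near/far split.} Fix $x\ne y$ and set $d=|x-y|$. Split $\R^2$ into the near region $N=B(x,2d)$ and its complement.
\begin{itemize}
\item[] On $N$, bound each potential separately. Since $N\subset B(y,3d)$, Step 2 gives
\begin{equation}
\int_N \frac{d\sigma(w)}{|x-w|}+\int_N\frac{d\sigma(w)}{|y-w|}\lesssim_\alpha M d^{\alpha-1}.
\end{equation}
\item[] On $N^c$, use $\bigl||x-w|^{-1}-|y-w|^{-1}\bigr|=\frac{\bigl||y-w|-|x-w|\bigr|}{|x-w|\,|y-w|}\le \frac{d}{|x-w||y-w|}\le\frac{2d}{|x-w|^2}$, because $|y-w|\ge|x-w|-d\ge|x-w|/2$ there. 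The second estimate of Step 2 with $\rho=2d$ then gives a contribution $\lesssim_\alpha d\cdot M d^{\alpha-2}=Md^{\alpha-1}$.
\end{itemize}
Summing the two pieces yields $|u(x)-u(y)|\lesssim_\alpha M|x-y|^{\alpha-1}$.

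The only real obstacle is Step 1, namely passing from balls centered on the support to arbitrary balls at all scales. This is exactly where the hypothesis $Mr_*^\alpha\ge\Phi$ enters, through Remark \ref{rem:bigr}; everything after that is the routine Riesz-potential computation.
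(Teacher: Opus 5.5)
Your argument is correct for $1<\alpha<2$ and is essentially the paper's proof: the same near/far split at radius $2|x-y|$, the same layer-cake bounds for $\int |x-w|^{-1}\,d\sigma$ near and $\int |x-w|^{-2}\,d\sigma$ far, and the same far-field estimate $\bigl||x-w|^{-1}-|y-w|^{-1}\bigr|\le 2|x-y|/|x-w|^2$. Your Step 1 (extending the Ahlfors bound to balls not centred on $\supp\sigma$, at the cost of a factor $2^\alpha$) makes explicit a point the paper uses silently.

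Do not claim that $\alpha=2$ is ``handled analogously''. The paper's far-field integral $\int_{2R}^\infty r^{\alpha-3}\,dr$ also diverges there, and the estimate is genuinely false at $\alpha=2$: for $\sigma=\chi_{B_1}\,dx$, the normal derivative of $u$ blows up like $\log(1/h)$ at distance $h$ from $\partial B_1$, so $u$ is only log-Lipschitz. The proposition should therefore be read for $1<\alpha<2$, which covers the only case used later, $\alpha=8/5$.
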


\begin{proof}
 The proof could be derived from general Schauder estimates from \cite{rough,regularitystruct}. For simplicity we give a self contained proof. Let us assume without loss of generality that $y=0$ and set $R=\abs{x}$. We consider the difference $u(x)-u(0)$ and split into a near-field contribution and a far-field contribution
    \begin{multline}
    \label{e:farnear}
        u(x)-u(0)=\left(\int_{B_{2R}}\frac{1}{\abs{z-x}}\,d\sigma(z)-\int_{B_{2R}}\frac{1}{\abs{z}}\,d\sigma(z)\right)\\ +\int_{B_{2R}^c}\left(\frac{1}{\abs{z-x}}-\frac{1}{\abs{z}}\right)\,d\sigma(z)=A+B.
    \end{multline}
        Let us estimate each term in \eqref{e:farnear} separately. By the hypothesis $Mr_*^\alpha\geq \Phi$, we know that \eqref{e:fullahl} holds. Notice that $B_{2R}\subset B(x,3R)$. Hence,
        \begin{multline}
            \abs{A}\leq \int_{B_{2R}}\frac{1}{\abs{z-x}}\,d\sigma(z)+\int_{B_{2R}}\frac{1}{\abs{z}}\,d\sigma(z)\\\leq \int_{B(x,3R)}\frac{1}{\abs{z-x}}\,d\sigma(z)+\int_{B_{3R}}\frac{1}{\abs{z}}\,d\sigma(z) \lesssim \sup_{w\in \R^2} \int_{B(w,3R)}\frac{1}{\abs{z-w}}\,d\sigma(z).
         \end{multline}
         Notice that 
        \begin{equation}
            \abs{z-w}^{-1}=\int_0^\infty \chi_{\{\abs{z-w}<r\}}(r)r^{-2}\,dr.
        \end{equation}
        We thus find using Fubini, \eqref{e:fullahl} and the hypothesis $\alpha>1$
        \begin{multline}
          \int_{B(w,3R)}\frac{1}{\abs{z-w}}\,d\sigma(z)=\int_{0}^{3R}r^{-2}\sigma(B(w,r))\,dr+\int^{\infty}_{3R}r^{-2}\sigma(B(w,3R))\,dr\\\lesssim M\int_{0}^{3R}r^{\alpha-2}\,dr+MR^\alpha\int^{\infty}_{3R}r^{-2}\,dr  \lesssim_\alpha M R^{\alpha-1},
        \end{multline}
        which implies 
        \begin{equation}
        \label{e:holderA}
            \abs{A}\lesssim_\alpha MR^{\alpha-1}.
        \end{equation}
        Let us now focus on $B$. Notice that for $z\in B_{2R}^c$ it holds $\abs{z}-\abs{x}>\abs{z}/2$. Hence, using the triangle inequality we get
        \begin{multline}
           \abs{\int_{B_{2R}^c} \frac{1}{\abs{z-x}}-\frac{1}{\abs{z}}\,d\sigma( z)}=\abs{\int_{B_{2R}^c}\frac{\abs{z}-\abs{z-x}}{\abs{z}\abs{z-x}}\,d\sigma ( z)} \\ \leq  \int_{B_{2R}^c} \frac{\abs{x}}{\abs{\abs{z}-\abs{x}}\abs{z}}\,d\sigma ( z) \lesssim R\int_{B_{2R}^c}\frac{1}{\abs{z}^2}\,d\sigma( z).
        \end{multline}
        Let us now remark that 
        \begin{equation}
            \abs{z}^{-2}=2\int_0^\infty\chi_{\{\abs{z}<r\}}(r)r^{-3}\,dr.
        \end{equation}
        Thus, using again Fubini and \eqref{e:fullahl}, we find
        \begin{equation}
            R\int_{B_{2R}^c}\frac{1}{\abs{z}^2}\,d\sigma( z)= 2MR\int_{2R}^{\infty}r^{-3}\sigma(B_{r})\,dr\lesssim M
            R\int_{2R}^{\infty}r^{\alpha-3}\,dr\lesssim_\alpha MR^{\alpha-1},
            \end{equation}
            which implies 
            \begin{equation}
                \label{e;holderB}
                \abs{B}\lesssim_\alpha MR^{\alpha-1}.
            \end{equation}
            Combining $\eqref{e:holderA}$ and \eqref{e;holderB} gives the claim.
            \end{proof}
            We also establish a local version of Proposition \ref{prop:holderpotential}.
            \begin{proposition}
                \label{prop:localholder}
                Let $B\subset \R^2$ be a bounded open set and $\omega\subset B$ be an open set with $d(\omega,\partial B)\geq \delta>0 $. Let $\sigma \in \mathcal{M}^+(\R^2)$ be such that $\sigma\LL B$ is upper $\alpha$-Ahlfors regular for some $\alpha>1$ and $M,r_*>0$, with $\sigma(B)=\Phi\leq Mr_*^\alpha$. Then for all $x,y\in \omega$
  \begin{equation}
      \abs{u(x)-u(y)}\lesssim_{B,\alpha}\left(M+\frac{\Phi}{\delta^2}\right) \abs{x-y}^{\alpha-1}.
      \end{equation}
            \end{proposition}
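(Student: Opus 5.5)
The plan is to split $\sigma=\sigma\LL B+\sigma\LL B^c$ and, correspondingly, $u=u_1+u_2$, where $u_1$ is the potential of $\sigma_1:=\sigma\LL B$ and $u_2$ is the potential of $\sigma_2:=\sigma\LL B^c$. The near part $u_1$ is handled by Proposition \ref{prop:holderpotential}. The measure $\sigma_1$ has total mass $\Phi=\sigma(B)\le Mr_*^\alpha$ and is upper $\alpha$-Ahlfors regular with constants $M,r_*$, so Proposition \ref{prop:holderpotential} applies directly. It gives
\[
\abs{u_1(x)-u_1(y)}\lesssim_\alpha M\abs{x-y}^{\alpha-1}
\]
for all $x,y\in\R^2$, and in particular for $x,y\in\omega$.

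For the far part $u_2$, the point is that on $\omega$ it is a smooth function, since $\supp\sigma_2\subset B^c$ lies at distance at least $\delta$ from $\omega$. For $x\in\omega$ and $z\in B^c$ we have $\abs{\nabla_x \abs{x-z}^{-1}}=\abs{x-z}^{-2}\le\delta^{-2}$. I would first treat $x,y\in\omega$ with $\abs{x-y}\le\delta/2$. Every point $w$ of the segment $[x,y]$ satisfies $d(w,B^c)\ge\delta/2$, so the mean value theorem gives
\[
\abs{u_2(x)-u_2(y)}\le 4\delta^{-2}\sigma(B^c)\abs{x-y}.
\]
Two issues remain. The first is that the bound involves $\sigma(B^c)$, whereas the statement has $\Phi=\sigma(B)$. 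I expect the intended reading is that $\Phi$ denotes the total mass (in the application $\sigma=\mu_0$ has mass $1$, and $\mu_0(B)\le1$). So I would bound by the total mass $\sigma(\R^2)$, and, if necessary, note that in the application $\sigma(\R^2)=1\lesssim_B$ the relevant constants. The second issue is converting the Lipschitz bound into the Hölder bound: since $\alpha-1\le1$ and $\abs{x-y}\le\operatorname{diam}B$, we have $\abs{x-y}\le(\operatorname{diam}B)^{2-\alpha}\abs{x-y}^{\alpha-1}$. This is where the dependence of the constant on $B$ enters.

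For $x,y\in\omega$ with $\abs{x-y}>\delta/2$ the segment may leave $\omega$, so I would estimate crudely. Both $u_2(x)$ and $u_2(y)$ are at most $\sigma(B^c)/\delta$, hence
\[
\abs{u_2(x)-u_2(y)}\le\frac{2\sigma(B^c)}{\delta}\le\frac{4\sigma(B^c)}{\delta^2}\cdot\frac{\delta}{2}\le\frac{4\sigma(B^c)}{\delta^2}\abs{x-y}.
\]
From here the conversion to $\abs{x-y}^{\alpha-1}$ is the same as before. Adding the bounds for $u_1$ and $u_2$ yields the claim.

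The only genuine subtlety is this bookkeeping of which mass appears in the far-field term and how the constant depends on $\operatorname{diam}B$; the analytic content is entirely in Proposition \ref{prop:holderpotential}.
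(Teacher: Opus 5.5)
Your proposal is correct and is essentially the paper's proof. Both use the splitting $u=u_B+u_{B^c}$, apply Proposition \ref{prop:holderpotential} to $\sigma\LL B$ for the near part, bound the far part by a $\delta^{-2}$-Lipschitz estimate, and convert via $\abs{x-y}\le(\textup{diam}\,B)^{2-\alpha}\abs{x-y}^{\alpha-1}$. The only difference is that the paper avoids your case split and segment argument by using directly $\bigl|\abs{x-z}^{-1}-\abs{y-z}^{-1}\bigr|\le\abs{x-y}\,\abs{x-z}^{-1}\abs{y-z}^{-1}$ with $\abs{x-z},\abs{y-z}\ge\delta$.

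Your worry about the mass is justified. The paper bounds the far-field integral by $\Phi\delta^{-2}\abs{x-y}$ with $\Phi=\sigma(B)$, which does not follow from the hypotheses. The statement in fact fails as written, e.g. for $\sigma=\delta_{z_0}$ with $z_0\notin B$: then $\sigma\LL B=0$ is Ahlfors regular with arbitrarily small $M$, so the claimed bound would force $u$ to be constant on $\omega$. Both your argument and the paper's therefore actually prove the estimate with $\sigma(B^c)$, or the total mass, in place of $\Phi$ in the far-field term.
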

            \begin{proof}
                By definition 
                \begin{equation}
                    u=\int_{B}\frac{1}{\abs{\cdot-z}}d\sigma(z)+\int_{B^c}\frac{1}{\abs{\cdot-z}}d\sigma(z)=u_B+u_{B^c}.
                \end{equation}
              By Proposition \ref{prop:holderpotential} applied to $\sigma\LL B$ it holds for  any $x,y \in \omega $ 
               \begin{equation}
               \label{e:holderb}
      \abs{u_B(x)-u_B(y)}\lesssim_\alpha M \abs{x-y}^{\alpha-1}.
      \end{equation}
      Now  for any $x,y \in \omega $,
      \begin{equation}
      \label{e:holderbc1}
          \abs{u_{B^c}(x)-u_{B^c}(y)}\leq \int_{B^c}\abs{\frac{1}{\abs{x-z}}-\frac{1}{\abs{y-z}}}d\sigma(z)\leq \int_{B^c}\frac{\abs{x-y}}{\abs{x-z}\abs{y-z}}d\sigma(z).
      \end{equation}
      Since $x,y \in \omega \subset B$ and $z\in B^c$, we have  $\abs{x-z}\geq \delta $, $\abs{y-z}\geq \delta $. Moreover   $\abs{x-y}\leq \textup{diam}(B)$, implies $|x-y|\le \textup{diam}(B)^{2-\alpha} |x-y|^{1-\alpha}$ and thus
      \begin{equation}
      \label{e:holderbc2}
          \int_{B^c}\frac{\abs{x-y}}{\abs{x-z}\abs{y-z}}d\sigma(z)\leq \frac{\Phi}{\delta^2}\abs{x-y}\leq_{B,\alpha}\frac{\Phi}{\delta^2}\abs{x-y}^{\alpha-1}. 
      \end{equation}
      Combining \eqref{e:holderb}, \eqref{e:holderbc1} and \eqref{e:holderbc2} gives the result.
            \end{proof}
            \begin{remark}
      \label{rem:schauder}
      We may interpret Proposition \ref{prop:holderpotential} as a sort of Schauder estimate in negative H\"older spaces. Indeed, rescaling both sides of \eqref{e:alphaal} by $r^2$, we may write
      \begin{equation}
      \label{e:negativeholder}
        \sup_{x\in\R^2}\frac{1}{r^2}  \sigma(B(x,r))\leq Mr^{\alpha-2}.
      \end{equation}
      On the one hand,  \eqref{e:negativeholder} resembles the definition of a negative H\"older norm of exponent $\alpha-2$ (see for instance \cite{rough,regularitystruct}). On the other hand, the potential $u$ solves in the sense of distributions (see for instance \cite{liebloss} for the definition of fractional Laplacian)
      \begin{equation}
        (-\Delta)^{1/2}u= C\sigma,
      \end{equation}
      for some universal constant $C>0$.
      Since the half Laplacian can be interpreted as a Dirichlet-to-Neumann operator (see for instance \cite[Section 1.3]{fractionallap}), the regularity we expect for $u$ is indeed $(\alpha-1)$-H\"older continuity.
      \end{remark}
We now prove quantitative estimates on the Sobolev norm of $\alpha$-Ahlfors regular measure. 
\begin{proposition}
      Let $\sigma \in \mathcal{M}^+(\R^2)$ be upper $\alpha$-Ahlfors regular for some $\alpha>1$ and $M,r_*>0$, with $\sigma(\R^2)=\Phi \leq Mr_*^\alpha$. Then 
     \begin{equation}
     \label{e:energyboundphi}
         \norm{\sigma}_{H^{-1/2}(\R^2)}^2\lesssim_\alpha  M^{\frac{1}{\alpha}} \Phi^{2-\frac{1}{\alpha}}.
     \end{equation}
\end{proposition}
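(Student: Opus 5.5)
We bound the potential $u$ of $\sigma$ pointwise on $\supp\sigma$ and then integrate: since $\norm{\sigma}_{H^{-1/2}(\R^2)}^2=\int u\,d\sigma$ by \eqref{e:sobolevpotential}, it suffices to show
\begin{equation}
    \sup_{x\in\supp\sigma} u(x)\lesssim_\alpha M^{1/\alpha}\Phi^{1-1/\alpha},
\end{equation}
and then multiply by $\sigma(\R^2)=\Phi$.

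For the pointwise bound, fix $x\in\supp\sigma$ and a radius $\rho>0$ to be chosen. Split
\begin{equation}
    u(x)=\int_{B(x,\rho)}\frac{d\sigma(z)}{\abs{x-z}}+\int_{B(x,\rho)^c}\frac{d\sigma(z)}{\abs{x-z}}.
\end{equation}
The far part is at most $\Phi/\rho$. For the near part, use the layer-cake identity $\abs{z-x}^{-1}=\int_0^\infty\chi_{\{\abs{z-x}<r\}}r^{-2}\,dr$ and Fubini, exactly as in the proof of Proposition \ref{prop:holderpotential}. This gives
\begin{equation}
    \int_{B(x,\rho)}\frac{d\sigma(z)}{\abs{x-z}}\le\int_0^\rho r^{-2}\sigma(B(x,r))\,dr+\frac{\sigma(B(x,\rho))}{\rho}.
\end{equation}
Since $\Phi\le Mr_*^\alpha$, Remark \ref{rem:bigr} gives $\sigma(B(x,r))\le Mr^\alpha$ for all $r>0$. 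Because $\alpha>1$, the integral converges and the near part is $\lesssim_\alpha M\rho^{\alpha-1}$. Hence $u(x)\lesssim_\alpha M\rho^{\alpha-1}+\Phi/\rho$ for every $\rho>0$.

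Choosing $\rho=(\Phi/M)^{1/\alpha}$ balances the two terms and yields $u(x)\lesssim_\alpha M^{1/\alpha}\Phi^{1-1/\alpha}$ for every $x\in\supp\sigma$. This choice of $\rho$ may exceed $r_*$, which is exactly why the full-range Ahlfors bound of Remark \ref{rem:bigr} is needed. Integrating against $\sigma$ gives \eqref{e:energyboundphi}.

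The only delicate point is that the Ahlfors bound \eqref{e:alphaal} holds only at centers $x\in\supp\sigma$. This is harmless here, because $u$ is only integrated against $\sigma$, so the pointwise bound is needed only on $\supp\sigma$.
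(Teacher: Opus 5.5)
Your proof is correct and is essentially the paper's argument: a layer-cake split of the potential at radius $(\Phi/M)^{1/\alpha}$, with the Ahlfors bound on the near part and the total mass on the far part. Your restriction to centers in $\supp\sigma$ is slightly more careful than the paper's claim for arbitrary $y\in\R^2$. One small correction to your commentary: since $\Phi\le Mr_*^\alpha$, the balancing radius $\rho=(\Phi/M)^{1/\alpha}$ never exceeds $r_*$, so it always lies in the range where the defining Ahlfors inequality holds, and Remark \ref{rem:bigr} is convenient there but not essential.
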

\begin{proof}
    Let us start by noticing that, since by \eqref{e:sobolevpotential} 
    \begin{equation}
        \norm{\sigma}_{H^{-1/2}(\R^2)}^2= \int_{\R^2}\int_{\R^2}\frac{1}{\abs{x-y}}\,d\sigma(x)d\sigma(y),
    \end{equation}
    it suffices to show that for any $y\in \R^2$
    \begin{equation}
        \int_{\R^2}\frac{1}{\abs{x-y}}\,d\sigma(x)\lesssim_\alpha M^{\frac{1}{\alpha}}\Phi^{1-\frac{1}{\alpha}}.
    \end{equation}
    Recall as before that for any $x,y\in \R^2$
    \begin{equation}
            \abs{x-y}^{-1}=\int_0^\infty \chi_{\{\abs{x-y}<r\}}(r)r^{-2}\,dr.
        \end{equation}
        Let $R>0$ to be chosen below. We have
        \begin{equation}
        \label{e:nearfarnorm}
            \int_{\R^2}\frac{1}{\abs{x-y}}\,d\sigma(x)=\int_{0}^R\sigma(B(y,r))r^{-2}\,dr +\int_{R}^\infty\sigma(B(y,r))r^{-2}\,dr.
        \end{equation}
        For the first term in \eqref{e:nearfarnorm} we have using \eqref{e:fullahl} (recall that $\Phi \leq Mr_*^\alpha$)
        \begin{equation}
        \label{e:nearnorm}
            \int_{0}^R\sigma(B(y,r))r^{-2}\,dr\lesssim M \int_{0}^Rr^{\alpha-2}\,dr\lesssim_\alpha M R^{\alpha-1},
        \end{equation}
        whereas for the second term in \eqref{e:nearfarnorm} using $\sigma(\R^2)=\Phi$ 
        \begin{equation}
        \label{e:farnorm}
            \int_{R}^\infty\sigma(B(0,r))r^{-2}\,dr\leq \Phi \int_{R}^\infty r^{-2}\,dr = \Phi R^{-1}.
        \end{equation}
        Combining \eqref{e:nearnorm} and \eqref{e:farnorm} we get
        \begin{equation}
             \int_{\R^2}\frac{1}{\abs{x-y}}\,d\sigma(x)\lesssim_\alpha M R^{\alpha-1}+\Phi R^{-1},
        \end{equation}
        which gives the conclusion choosing $R=(\Phi/M)^\frac{1}{\alpha}$.
\end{proof}
For a measure $\sigma\in\mathcal{M}^+(\R^2)$, we now denote its barycenter 
\begin{equation}
    \label{e:barycenter}
    X=\frac{1}{\Phi}\int_{\R^2} x\,d\sigma(x)
\end{equation}
and its spreading scale
\begin{equation}
    \label{e:r}
    r=\frac{1}{\Phi}\int_{\R^2} \abs{x-X}\,d\sigma(x).
\end{equation}

\begin{corollary}
  Let $\sigma \in \mathcal{M}^+(\R^2)$ be upper $\alpha$-Ahlfors regular for some $\alpha>1$ and $M,r_*>0$, with $\sigma(\R^2)=\Phi\leq Mr_*^\alpha$. Then 
    \begin{equation}
    \label{e:rphilower}
         M^{\frac{1}{\alpha}}r\gtrsim_\alpha  \Phi^\frac{1}{\alpha}.
    \end{equation}
\end{corollary}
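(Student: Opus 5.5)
The plan is to compare the two available bounds on the Coulomb energy $\norm{\sigma}_{H^{-1/2}(\R^2)}^2=\int\int\abs{x-y}^{-1}\,d\sigma(x)d\sigma(y)$. The previous proposition, estimate \eqref{e:energyboundphi}, gives an upper bound in terms of $M$ and $\Phi$. The spreading scale $r$ gives a lower bound. Concentration of mass near the barycenter forces the energy to be large, while Ahlfors regularity prevents too much concentration.

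\textbf{Lower bound.} By Chebyshev's inequality and the definition \eqref{e:r} of $r$,
\begin{equation}
\sigma\big(\R^2\setminus B(X,2r)\big)\le \frac{1}{2r}\int_{\R^2}\abs{x-X}\,d\sigma(x)=\frac{\Phi}{2}.
\end{equation}
Hence $\sigma(B(X,2r))\ge \Phi/2$. For $x,y\in B(X,2r)$ we have $\abs{x-y}\le 4r$, and therefore
\begin{equation}
\norm{\sigma}_{H^{-1/2}(\R^2)}^2\ge \int_{B(X,2r)}\int_{B(X,2r)}\frac{1}{\abs{x-y}}\,d\sigma(x)d\sigma(y)\ge \frac{\sigma(B(X,2r))^2}{4r}\ge \frac{\Phi^2}{16 r}.
\end{equation}

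\textbf{Combining.} Inserting the upper bound \eqref{e:energyboundphi}, which applies because $\Phi\le Mr_*^\alpha$, we get
\begin{equation}
\frac{\Phi^2}{r}\lesssim_\alpha M^{1/\alpha}\Phi^{2-1/\alpha}.
\end{equation}
Rearranging gives $\Phi^{1/\alpha}\lesssim_\alpha M^{1/\alpha} r$, which is exactly \eqref{e:rphilower}.

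\textbf{Possible obstacles.} None of these steps is serious. The only point to check is that \eqref{e:energyboundphi} is used with its hypotheses as stated. The degenerate case $r=0$ would mean $\sigma=\Phi\delta_X$, which contradicts $\alpha$-regularity for $\alpha>0$ as $r\to0$ when $\Phi>0$, and the inequality is trivial when $\Phi=0$.

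There is an alternative route that avoids the energy bound. It uses only $\sigma(B(X,2r))\ge\Phi/2$ together with a covering argument: any ball containing mass has a support point $x$ within distance $2r$ of $X$, so $B(X,2r)\subset B(x,4r)$. If $4r\le r_*$, this gives $\Phi/2\le M(4r)^\alpha$ directly. If $4r>r_*$, then $\Phi\le Mr_*^\alpha<M(4r)^\alpha$. Either way the claim follows, so this route is an equally good fallback.
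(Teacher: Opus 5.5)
Your main argument is correct and follows the paper's strategy of squeezing $\norm{\sigma}_{H^{-1/2}(\R^2)}^2$ between a lower bound of order $\Phi^2/r$ and the Ahlfors-based upper bound \eqref{e:energyboundphi}. The paper gets that lower bound by Cauchy--Schwarz, $\Phi^2\le\bigl(\int\int\abs{x-y}\,d\sigma d\sigma\bigr)^{1/2}\norm{\sigma}_{H^{-1/2}(\R^2)}$ with $\int\int\abs{x-y}\,d\sigma d\sigma\le 2\Phi^2 r$, where you use Chebyshev and restrict to $B(X,2r)$, an equally elementary substitute. Your fallback covering argument is also correct and is more direct than both, since it bypasses the Coulomb energy entirely and does not even need $\alpha>1$.
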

\begin{proof}
Using H\"older's inequality we have 
    \begin{equation}
    \label{e:rholder}
        \Phi^2=\int_{\R^2}\int_{\R^2}1\,d\sigma(x)d\sigma(y)\lesssim\left(\int_{\R^2}\int_{\R^2}\abs{x-y}\,d\sigma(x)d\sigma(y)\right)^{1/2}\norm{\sigma}_{H^{-1/2}(\R^2)}.
    \end{equation}
By \eqref{e:energyboundphi},
\begin{equation}
\label{e:rnormest}
     \norm{\sigma}_{H^{-1/2}(\R^2)}\lesssim_\alpha M^{\frac{1}{2\alpha}} \Phi^{1-\frac{1}{2\alpha}}.
\end{equation}
Moreover by triangle inequality
\begin{equation}
\label{e:rtriangle}
    \int_{\R^2}\int_{\R^2}\abs{x-y}\,d\sigma(x)d\sigma(y)\lesssim \int_{\R^2}\int_{\R^2}\abs{x-X}\,d\sigma(x)d\sigma(y)=\Phi^2 r.
\end{equation}
Plugging \eqref{e:rnormest} and \eqref{e:rtriangle} in \eqref{e:rholder} we find the claim. 
\end{proof}

\subsection{Concentration of minimizers, proof of Theorem \ref{th:mainch5}}
\label{sec:main}
\begin{proof}[Proof of Theorem \ref{th:mainch5}]
We find a contradiction to the following statement: there exist $x_0\in \R^2, r_0>0$ such that $\mu_0(B(x_0,r_0/2))>0$ and for all $M>0$ there exists $r^*> 0$ such that
\begin{equation*}
    \mu_0\LL B(x_0,r_0)(B(x,r))\leq M r^{8/5}\quad \textup{ for all } x \in \supp\mu_0\LL B(x_0,r_0) \textup{ and } r\in (0,r^*]
\end{equation*}
 By Remark \ref{rem:contr}, this implies the thesis.

Fix $T\ll 1$ and choose $\overline{X}\in B(x_0,2r_0/3) $ such that $\mu_T(\{\overline{X}\})=\Phi>0$. Such a point can be found for any $T\ll 1$ since $\mu_0(B(x_0,r_0/2))>0$.  We can assume that $\Phi\ll Mr_*^{8/5}$ for $T$ small enough, otherwise $\mu_0$ would be atomic. Let us call $\mu'$ the backwards subsystem emanating from $x$ at time $T$ and $\mu''=\mu-\mu'$. By \eqref{e:bddR} from Lemma \ref{lem:a priori}, for $T$ small enough $\supp \mu'_0$ is contained in some ball compactly contained in $B(x_0,r_0)$, say $\supp \mu'_0\subset B(x_0,3r_0/4)$. In particular this implies that $\mu'_0$ is upper $\alpha$-Ahlfors regular. Without loss of generality in the rest of the proof we assume $\overline{X}=0$. Our contradiction will arise at the level of the subsystem $\mu'$.
   
   To simplify the notation we set $I(\mu)=I(\mu,(0,T))$, $E(\mu)=E(\mu,(0,T))$ and $P(\mu)=P(\mu,(0,T))$. Let us consider the boundary measure of the subsystem $\mu_0'$ and let us denote by $X$ and $r$ its barycenter and  first moment respectively, as defined in \eqref{e:barycenter} and \eqref{e:r}. 
Let us prove a bound for $r$. Let $X(t)$ be the barycenter of $\mu_t'$ so that $X=X(0)$. By Lemma \ref{lem:shiftmin} we have that $X(t)=\frac{T-t}{T}X$ and $\nu_t'=(\cdot-X(t))\#\mu_t'$ satisfies 
\begin{equation}
\label{e:shiftprime}
    I(\nu')= I(\mu')-\Phi\frac{\abs{X}^2}{T}.
\end{equation}
By Lemma \ref{lem:l1bound} we have
\begin{equation}
    \Phi^{\frac{3}{4}} r=\Phi^{-{1/4}}\int_{\R^2} \abs{x-X}\,d\mu'_0(x)=\Phi^{-{1/4}}\int_{\R^2} \abs{x}\,d\nu'_0(x)\lesssim I(\nu')
\end{equation}
which implies 
\begin{equation}
\label{e:rbound}
    \Phi^\frac{3}{4} r \lesssim \lt(I(\mu')-\Phi\frac{\abs{X}^2}{T}\rt).
\end{equation}
We now define $\Tilde{\nu}'\in \mathcal{A}^*$ as
 \begin{equation}
 \label{e:tildenuprime}
        \Tilde{\nu}'_t= (\cdot/2)\#\nu'_t
    \end{equation}  
and
\begin{equation}
    \label{e:competitor}
    \Tilde{\mu}_t'=(\cdot+X(t))\# \Tilde{\nu}_t'.
\end{equation}
Since $\int_{\R^2}x\,d\Tilde{\nu}'_t=0$ for $t\in[0,T]$, we can use Lemma \ref{lem:galileianshift} to find
\begin{equation}
\label{e:competitorenergy}
    I(\Tilde{\mu}')-\Phi\frac{\abs{X}^2}{T}= I(\Tilde{\nu}'). 
\end{equation}
A direct computation shows that 
\begin{equation}
    \label{e:kineticpertilde}
    E(\Tilde{\nu}')={1/4}E(\nu') \qquad \text{ and }\qquad P(\Tilde{\mu}')\leq P(\mu').
\end{equation}
Moreover,
\begin{equation}
\label{e:deviationmutilde}
    \int_{\R^2}\abs{x-X}\,d\Tilde{\mu}_0'\lesssim \Phi r
\end{equation}
and 
\begin{equation}
\label{e:bondarynormmutilde}
    \norm{\Tilde{\mu}'_0}^2_{H^{-1/2}(\R^2)}\lesssim \norm{\mu'_0}^2_{H^{-1/2}(\R^2)}.
\end{equation}
Let us finally define $\Tilde{\mu}=\mu-\mu'+\Tilde{\mu}'$.
By linearity 
\begin{equation}
\label{e:kineticlinearity}
    E(\tilde{\mu})=E(\mu)-E(\mu')+E(\Tilde{\mu}')
\end{equation}
and by the no-loop condition,
\begin{equation}
    \label{e:perimetersublinearity}
    P(\Tilde{\mu})-P(\mu)= P(\Tilde{\mu}')-P(\mu').
\end{equation}
Hence, by minimality, \eqref{e:kineticlinearity} and \eqref{e:perimetersublinearity} we find
\begin{equation}
\label{e:minimality}
    I(\mu')\leq I(\Tilde{\mu}')+\norm{\Tilde{\mu}_0}_{H^{-1/2}(\R^2)}^2-\norm{\mu_0}_{H^{-1/2}(\R^2)}^2.
\end{equation}
Exploiting \eqref{e:shiftprime}, \eqref{e:competitorenergy} and \eqref{e:kineticpertilde}, \eqref{e:minimality} reduces to 
\begin{equation}
\label{e:kinminimality}
    E(\mu')-\Phi\frac{\abs{X}^2}{T}\lesssim \lt(\norm{\Tilde{\mu}_0}_{H^{-1/2}(\R^2)}^2-\norm{\mu_0}_{H^{-1/2}(\R^2)}^2\rt).
\end{equation}
Let us now recall \eqref{e:sobolevpotential} and let us call $u$ the potential of $\mu_0$ as in \eqref{e:potential}. Expanding the square we find 
\begin{multline}
\label{e:boundaryestimate}
    \norm{\Tilde{\mu}_0}_{H^{-1/2}(\R^2)}^2=\norm{\mu_0}_{H^{-1/2}(\R^2)}^2+\int_{\R^2} u(x)\,(d\Tilde{\mu}_0'-d\mu_0')(x)\\+\int_{\R^2}\int_{\R^2}\frac{1}{\abs{x-y}}\,(d\Tilde{\mu}_0'-d\mu_0')(x)(d\Tilde{\mu}_0'-d\mu_0')(y)\\\leq \norm{\mu_0}_{H^{-1/2}(\R^2)}^2+\int_{\R^2} u(x)\,(d\Tilde{\mu}_0'-d\mu_0')(x)+\norm{\Tilde{\mu}_0'}_{H^{-1/2}(\R^2)}^2+\norm{\mu_0'}_{H^{-1/2}(\R^2)}^2,
\end{multline}
where in the last passage we neglected the negative mixed terms between $\Tilde{\mu}_0'$ and $\mu_0'$. Putting \eqref{e:boundaryestimate}, \eqref{e:kinminimality} and \eqref{e:bondarynormmutilde} together we find
\begin{equation}
\label{e:variation}
    E(\mu')-\Phi\frac{\abs{X}^2}{T}\lesssim \int_{\R^2} u(x)\,(d\Tilde{\mu}_0'-d\mu_0')(x)+\norm{\mu_0'}_{H^{-1/2}(\R^2)}^2.
\end{equation}
Let us now estimate the first term on the right-hand side of \eqref{e:variation}. Since $\supp \mu'_0\subset B(x_0,3r_0/4)$, by Proposition \ref{prop:localholder}  with $\alpha=8/5$, for $x,y\in \supp \mu'_0$,
  \begin{equation}
  \label{e:holderproof}
      \abs{u(x)-u(y)}\lesssim_{r_0} \left(M+\Phi\right) \abs{x-y}^{3/5}\lesssim M\abs{x-y}^{3/5},
      \end{equation}    
      where in the last inequality we used $\Phi\ll M$. Combining \eqref{e:holderproof}
and H\"older's inequality we find 
\begin{multline}
    \abs{\int_{\R^2} u(x)\,(d\Tilde{\mu}_0'-d\mu_0')(x)}\leq \int_{\R^2} \abs{u(x)-u(X)}\,(d\Tilde{\mu}_0'+d\mu_0')(x) \\\lesssim M \int_{\R^2} \abs{x-X}^{3/5}\,(d\Tilde{\mu}_0'+d\mu_0')(x)\lesssim M \Phi^{2/5} \left(\int_{\R^2} \abs{x-X}\,(d\Tilde{\mu}_0'+d\mu_0')(x)\right)^{3/5}.
\end{multline}
Using now the definition of $r$ and \eqref{e:deviationmutilde} we obtain 
\begin{equation}
    \label{e:longrangeestimate}
    \abs{\int_{\R^2} u(x)\,(d\Tilde{\mu}_0'-d\mu_0')(x)}\lesssim M \Phi r^{3/5}.
\end{equation}
If we now plug \eqref{e:energyboundphi} and  \eqref{e:longrangeestimate} in \eqref{e:variation} we obtain 
\begin{equation}
    E(\mu') -\Phi\frac{\abs{X}^2}{T}\lesssim M^{5/8}\Phi^{11/8} +M\Phi r^{3/5} .
\end{equation}
By equipartition of energy \eqref{e:equipartition} we find 
\begin{equation}
\label{e:fullestimate}
    I(\mu') -\Phi\frac{\abs{X}^2}{T}\lesssim M^{5/8}\Phi^{11/8} +M\Phi r^{3/5}.
\end{equation}
Combining \eqref{e:rbound} and \eqref{e:fullestimate} we find 
\begin{equation}
    \Phi^{3/4}r\lesssim M^{5/8}\Phi^{11/8} +M\Phi r^{3/5}.
\end{equation}
Finally, by Young's inequality, for any $\eps>0$ 
\begin{equation*}
    M\Phi r^{3/5}\lesssim \eps\Phi^{3/4}r +C(\eps)M^{5/2}\Phi^{11/8},
\end{equation*}
for some $C(\eps)>0$. Choosing $\eps$ small enough, we can conclude (using also $M\ll 1$)
\begin{equation}
\label{e:final}
     \Phi^{3/4}r\lesssim (M^{5/8}+M^{5/2}) \Phi^{11/8}\lesssim M^{5/8} \Phi^{11/8}.
\end{equation}
Combining \eqref{e:final} with \eqref{e:rphilower}, we find $M\gtrsim 1$ which is a contradiction.
\end{proof}
\section*{Acknowledgement}
A.C.\ wishes to thank the hospitality of the Max Planck Institute for Mathematics in The Sciences, where this work started. A.C.\ is partially supported by the European Union's Horizon 2020 research and innovation program under the Marie Sklodowska-Curie grant agreement No 945332. A.C.\ is partially supported by the ANR project
GOTA (ANR-23-CE46-0001). Part of this research was supported by the ANR project STOIQUES. This work was supported by a public grant from the Fondation Mathématique Jacques Hadamard.
\printbibliography

\medskip
\small
\begin{flushright}
\noindent \verb"alessandro.cosenza@universite-paris-saclay.fr"\\
Universit\'e Paris Saclay,\\
CNRS, Laboratoire de Math\'ematiques d'Orsay,\\
91405 Orsay, France
\end{flushright}
\medskip
\small
\begin{flushright}
\noindent \verb"michael.goldman@cnrs.fr"\\
CMAP, CNRS, \'Ecole polytechnique, Institut Polytechnique de Paris,\\ 
91120 Palaiseau, France
\end{flushright}
\medskip
\begin{flushright}
\noindent \verb" felix.otto@mis.mpg.de"\\
Max Planck Institute For Mathematics In The Sciences,\\
04103, Leipzig, Germany
\end{flushright}

\end{document}